\documentclass{article}

\usepackage[english]{babel}

\usepackage[a4paper,top=2.5cm,bottom=2.5cm,left=2.6cm,right=2.6cm]{geometry}

\usepackage{amsmath,amssymb,amsthm}
\usepackage{graphicx}
\usepackage[colorlinks=true, allcolors=blue]{hyperref}
\usepackage{enumitem}
\newlist{typesofedges}{enumerate}{1}
\setlist[typesofedges, 1]
{label=(Type \arabic{typesofedgesi}),
leftmargin=45pt,
rightmargin=0pt
}

\newlist{typesofedgestw}{enumerate}{1}
\setlist[typesofedgestw, 1]
{label=(Type \arabic{typesofedgestwi}),
leftmargin=45pt,
rightmargin=0pt,
start =0
}

\usepackage{comment}
\usepackage{tikz}
\usetikzlibrary {decorations.pathmorphing, decorations.pathreplacing, decorations.shapes, decorations.markings, arrows.meta, calc, intersections}
\usepackage[font=small,labelfont=bf,margin=.5cm]{caption}
\usepackage{subcaption}

\usepackage{etoolbox}
\makeatletter
\patchcmd{\@endtheorem}{\@endpefalse}{}{}{}
\makeatother

\newtheorem{theorem}{Theorem}

\newtheorem{remark}[theorem]{Remark}
\newtheorem{observation}[theorem]{Observation}
\newtheorem{lemma}[theorem]{Lemma}
\newtheorem{definition}[theorem]{Definition}

\newtheorem{conjecture}[theorem]{Conjecture}

\newcommand{\wavytriangle}{%
  \mathord{%
    \tikz[baseline=0.3ex, scale=0.17]{
      \draw (0,0) -- (1,1.6);
      \draw (1,1.6) -- (1.8,0.4);
      \draw[decorate,
            decoration={snake, amplitude=0.8pt, segment length=5pt}]
        (0,0) -- (1.8,0.4);
    }%
  }%
}

\author{Julian Becker\thanks{Department of Mathematics, LMU Munich, Munich 80333 Germany. Email: \href{mailto:becker@math.lmu.de}{\nolinkurl{becker@math.lmu.de}.}}
\and
Konstantinos Panagiotou\thanks{Department of Mathematics, LMU Munich, Munich 80333 Germany. Email: \href{mailto:kpanagio@math.lmu.de}{\nolinkurl{kpanagio@math.lmu.de}.}}
\and
Matija Pasch\thanks{Munich, Germany. E-mail: \href{mailto:matija.pasch@gmail.com}{\nolinkurl{matija.pasch@gmail.com}}.}
}

\title{Improved Universal Graphs for Trees\footnote{An extended abstract that contains parts of this work was published in EuroComb'25~\cite{eurocomb25}.}}

\date{\today}

\begin{document}

\maketitle

\begin{abstract}
A graph $G$ is \emph{universal} for a class of graphs $\cal C$, if, up to isomorphism, $G$ contains every graph in $\cal C$ as a subgraph. In 1978, Chung and Graham
asked for the minimal number $s(n)$ of edges in a graph with $n$ vertices that is universal for all trees with $n$ vertices. The currently best bounds assert that $n\ln n-O(n)\le s(n) \le C n\ln n+O(n)$, where $C = \frac{14}{5\ln 2} \approx 4.04$. We improve the upper bound to $c n\ln n + O(n)$, where $c = \frac{19}{6\ln 3} \approx 2.88$. In the proof we develop a strategy that, broadly speaking, is based on separating trees into \emph{three} parts, thus enabling us to embed them in a structure that originates from ternary trees.

Our method also applies to graphs with a bound on their treewidth. Let $s_w(n)$ be the minimum number of edges in a $n$-vertex graph that is universal for graphs with treewidth~$w$. By performing a blow-up to our universal structure for trees we establish that $nw \ln(n/w) -O(nw) \leq  s_w(n) \leq \frac{19}{6\ln3} n (w+1) \ln(n/w) + O(nw)$.
\end{abstract}

\section{Introduction}
A graph $G$ is \emph{universal} for a class of graphs $\cal C$, if, up to isomorphism, $G$ contains every graph in $\cal C$ as a subgraph. The study of universal graphs was initiated by Rado in 1964 \cite{rado1964} and has flourished over the last six decades, see~\cite{ferber2016,johannsen2013,alon2000} for some recent work and further references. In this paper we consider universal graphs with $n$ vertices for the class  of trees with $n$
vertices, also known as \emph{tree-complete} graphs~\cite{sedlavcek1975,nebesky1975}.
Chung and Graham asked for the minimum number $s(n)$ of edges in a tree-complete graph in 1978~\cite{chung1978}. In the same paper, by bounding the degree sequence of any such  graph, they established the lower bound $s(n) \geq\frac{1}{2}n\ln n -O(n)$. Moreover, using a recursive construction and improving over several previous results, they showed that $s(n)\leq \frac{7}{\ln 4}n \ln n+O(n)$ in \cite{chung1983}. 
In the following forty years, neither of the bounds was improved.
However, recently the lower bound was pushed to $n\ln n-O(n)$~\cite{gyori2025}, where, instead of only considering the degree sequence, the authors counted the edges carefully and jointly for all embeddings.  The currently best upper bound is by Kaul et.~al.~\cite{kaul2025universalgraphstreestreelike}, who showed that $s(n)\le\frac{14}{5\ln2}n \ln n +O(n)$, and who improved upon~\cite{kim2025}. Hence, to date,
\[
    n\ln n - O(n) \leq s(n) \leq \frac{14}{5\ln 2}n \ln n+O(n),
\]
where the constant is numerically $4.039\dots$.
Our main result improves this to $19/(6\ln 3) = 2.882\dots$.
\begin{theorem}\label{thm:main}
We have $s(n)\le\frac{19}{6\ln3}n\ln n+O(n)$.
\end{theorem}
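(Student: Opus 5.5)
We will construct an explicit universal graph on a vertex set indexed by the nodes of a complete ternary tree, and show that every $n$-vertex tree embeds into it via a recursive separator decomposition.

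\textbf{Host graph.} Let $h=\lceil\log_3 n\rceil+O(1)$ and let $T_h$ be the complete rooted ternary tree of depth $h$. Take vertex set $V(T_h)$; after trimming to exactly $n$ vertices at the end, this will cost only $O(n)$ edges. Join a vertex $x$ at depth $d$ to a carefully chosen subset of its ancestors and of the vertices in the ternary subtrees of nearby ancestors. Broadly:
\begin{itemize}
\item $x$ is adjacent to all its ancestors within the "separator window";
\item $x$ is adjacent to a bounded number of specified vertices in sibling subtrees, so that three parts of a separated tree can be glued back together.
\end{itemize}
The edge count per vertex should then be about $\frac{19}{6}$ times the number of levels above it. Since $n$ times the average depth $\approx n\log_3 n = n\ln n/\ln 3$, this yields $\frac{19}{6\ln 3}n\ln n+O(n)$ edges.

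\textbf{Separator lemma.} The key structural step is a lemma of the following form. Every tree $F$ with $m$ vertices, possibly with up to two prescribed "root/port" vertices already placed, can be split by removing $O(1)$ vertices into three forests $F_1,F_2,F_3$. Each has at most roughly $m/3$ vertices, and each sees only a bounded number of already-embedded vertices through a constrained pattern. I would prove this by the classical centroid argument. Take a centroid $c$, so all components of $F-c$ have size at most $m/2$. Then greedily group the components into three bins of size at most $\lceil m/3\rceil$. When a single component exceeds $m/3$, descend into it and cut off a second vertex, accepting one or two extra "port" vertices. The bookkeeping must track how many boundary vertices each piece inherits (ancestors it must connect to). The invariant is that each piece carries at most $k$ ports placed among the last few levels of ancestors.

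\textbf{Embedding by induction.} Place the separator vertices at the root of the current ternary subtree and recurse on the three pieces in the three child subtrees. Check that every edge of $F$ goes either between a piece vertex and a port, which must be a host edge to a designated ancestor, or inside a piece, which is handled by induction. The size imbalance (pieces only approximately $m/3$) forces a slack of $O(1)$ levels, or a weighted ternary tree. The leftover unused host vertices are handled by embedding into a host with $n+O(n/\log n)$ vertices and deleting unused vertices, or by padding.

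\textbf{Main obstacle.} The hard part is the accounting that produces exactly $19/6$: designing the adjacency pattern so that every case of the separator lemma is realized, while the average number of ancestor-type connections per level is $19/6$ rather than something larger. I would first enumerate the possible configurations of ports, meaning how many separator vertices a piece must see and at which relative depth. Then I would assign to each child position only the edges required by the worst configuration, and compute the resulting weighted sum as a linear recurrence over levels, whose growth rate gives the constant. If a naive pattern gives a larger constant, I would try allowing ports to be inherited across two levels and only connecting to the "middle" child, exploiting that not all three children need full connectivity.
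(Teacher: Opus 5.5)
Your proposal has a genuine gap at its core. The host graph is never defined, so the edge bound is asserted (``should then be about $\frac{19}{6}$ times the number of levels'') rather than derived. You also explicitly defer the accounting that produces $\frac{19}{6}$ as ``the hard part''. That accounting, together with an embedding that uses only the edges you count, is the whole content of the theorem.

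In the paper the adjacency pattern is explicit. Each vertex $v$ is joined to all its descendants, and to $v-1$, $v-2$ (its two lexicographic predecessors on its level) together with all their descendants. So every vertex sees essentially three vertices on each level above it. In addition, $v$ is joined to the later-eaten half of the subtree rooted at the first child of $v+1$. This contributes $\frac12\cdot\frac13=\frac16$ per level, giving $3+\frac16=\frac{19}{6}$.

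The extra $\frac16$ does not come out of a generic optimisation over port configurations. It is forced by one concrete situation: the two-cut splitting of Lemma~\ref{lem:sep-main-cons2} can only guarantee $|F_1|+|F_2|\ge\frac32N+X$, which the paper shows is tight. Hence the part attached to the second separator may have to use up to $\lfloor N/2\rfloor$ leftover vertices of $A'$ while that separator sits at $r_{A'}-1$.

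The embedding scheme you sketch would also not go through as stated. Placing the separator at the root and sending three pieces into the three child subtrees requires each piece to fit into a subtree of fixed size. Your splitting lemma only promises ``roughly $m/3$'', and even its greedy step fails. If the centroid has four branches of size $(m-1)/4$, no component exceeds $m/3$, yet no two branches fit into one bin of size $\lceil m/3\rceil$. Cutting a branch produces a second separator whose neighbours can lie in several child subtrees. It is then not an ancestor of all of them and needs extra host edges, and you give no way to check that this stays within $3+\frac16$ neighbours per level.

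Your fixes for imbalance and for the vertex count do not help. $O(1)$ levels of slack multiply the host size by a constant. Embedding into a host on $n+O(n/\log n)$ vertices and then ``deleting unused vertices'' yields a tree-dependent subgraph, not one fixed graph on exactly $n$ vertices as $s(n)$ requires.

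The idea you are missing is the paper's sequential packing along the eating order. One proves by induction that every \emph{forest} $F$ with $|F|<|A|$ embeds onto exactly the first $|F|$ vertices of every admissible graph $A$. Admissibility survives eating a prefix and passing to the subgraphs of Observations~\ref{obs:1} and~\ref{obs:2}. Pieces may then spill over up to three consecutive level-two subtrees. A separator is placed at $r_{A'}$ exactly when everything below $r_{A'}$ has been eaten. Lemma~\ref{lem:sep-main} ($m\le|F_3|\le M$, $|F_1|\le|F|-1-M$, $|F_2|\le|F_1|$) and its refinements provide precisely the size control needed relative to the ``magic size'' $2N+X$. This makes imbalance harmless and handles every $n$ at once, because $U_{n,3}$ is itself admissible.
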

In the proof we develop a strategy for embedding trees iteratively into a host graph based on ternary trees, which, in its very essence, is inspired by the aforementioned~\cite{chung1983}. In particular, given a certain tree, we remove carefully up to two vertices to achieve a partition into components of suitable sizes. 
Crucially, we are able to embed these components recursively, while preserving  certain properties of the host graph. As we shall demonstrate in Section~\ref{sec: binary trees}, we are able to simplify and improve upon along these lines also the original strategy presented in \cite{chung1983}, which is based on binary trees.

The improved upper bound is one further step towards answering Chung and Graham's question about determining $s(n)$. However, obtaining \emph{asymptotic} bounds would already be of great interest, and a starting point could be to show the following statement that looks very natural in this context.

\begin{conjecture}
The $\lim_{n\to \infty} \frac{s(n)}{n\ln n}$ exists.
\end{conjecture}
If this conjecture is true, then it would be interesting to determine the value $c^*$ of the limit. The results in \cite{gyori2025} and Theorem~\ref{thm:main} assert that if the limit exists, then $1 \le c^* \le \frac{19}{6\ln 3}$. We have no indication that either of the bounds could be tight.
Indeed, the lower bound is obtained by degree considerations (every universal graph must contain vertices of degree at least $n/i$ for very many $i$), while the upper bounds (here and everywhere else) are derived  by cutting the trees in smaller parts and embedding them recursively in the universal structure. We expect that some interaction actually occurs between the two different parameters, so that eventually both contribute to $s(n)$.

Our results for trees can be generalized to graphs that are structurally close to trees. The framework for this is given by the notion of treewidth, introduced by Robertson and Seymour in \cite{ROBERTSONtreewidth}, which, roughly speaking, measures how similar a graph is to a tree. A \textit{tree-decomposition} of a graph $G$ is a collection of subsets $(B_x:x\in T)$ of the vertex set $V(G)$ (called \textit{bags}) indexed by the vertices of a tree $T$, such that
\begin{itemize}
\itemsep0pt
    \item every vertex $v\in G$ appears in at least one bag;
    \item for every edge $uv$ in $G$, there exists some bag containing both $u$ and $v$, and
    \item for every vertex $v\in G$, the set of bags containing $v$ form a subtree of $T$.    
\end{itemize}
The \textit{width} of a tree-decomposition is the size of the largest bag minus one. The treewidth of $G$, denoted by $\text{tw}(G)$, is the minimum width over all tree-decomposition of $G$. 
Let $s_w(n)$ be the minimum number of edges in a $n$-vertex graph that is universal for $n$-vertex graphs with treewidth~$w$. Since trees have treewidth one, we have in particular $s_1(n)=s(n)$.
In \cite{kaulwood2025}, Kaul and Wood establish that
\[
s_w(n) \geq \frac{1}{2}nw \ln(n/w) -O(nw).
\]
With a slight modification of their argument, we sharpen the bound by a factor of two. Moreover, by applying a graph blow-up to our universal construction for trees, we obtain a graph matching the asymptotic order of the lower bound. The following theorem summarizes our results.
\begin{theorem}\label{thm:main-treewidth}
Uniformly in $w$ we have $nw \ln(n/w) -O(nw) \leq  s_w(n) \leq \frac{19}{6\ln3} (w+1) n \ln(n/w) + O(nw)$.
\end{theorem}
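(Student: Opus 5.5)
The upper bound comes from blowing up the tree-universal graph of Theorem~\ref{thm:main}, and the lower bound from a degree-counting argument in the style of Kaul--Wood and Győri et al.

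\textbf{Upper bound.} Let $w\ge 1$ and $m=\lceil n/(w+1)\rceil$. Every $n$-vertex graph $H$ of treewidth $w$ has a tree-decomposition of width $w$. By standard normalization (splitting and merging bags, and padding with dummy vertices) I aim to find a tree $T$ on at most $m+O(1)$ vertices and a map $\phi:V(H)\to V(T)$ with two properties: each fibre has size at most $w+1$, and every edge of $H$ lies inside a fibre or between fibres of adjacent vertices of $T$. Put differently, $H$ is a subgraph of the blow-up $T[K_{w+1}]$, in which each vertex becomes a clique of size $w+1$ and each edge becomes a complete bipartite graph.

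This reduction is the main obstacle. The naive bags overlap, and a vertex lying in many bags must be assigned to just one of them while keeping all its edges between adjacent bags. I would use a \emph{smooth} tree-decomposition, where all bags have size $w+1$ and adjacent bags differ in exactly one vertex, so there are $n-w$ bags. In it, assign each vertex $v$ to the bag where $v$ first appears, rooting the tree at a bag $r$. The root bag receives $w+1$ vertices and every other bag exactly one. This gives a partition into fibres, but an edge $uv$ may join first-appearance bags that are far apart. So I would contract: group the bags into connected pieces, using a tree-partition argument, so that each piece contains $w+1$ new vertices and neighbours always lie in the same or adjacent pieces. If the contraction cannot be arranged exactly, the fallback is to lose a constant factor only in the $O(nw)$ term.

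Granting the reduction, take $G$ to be the tree-universal graph of Theorem~\ref{thm:main} on $m$ vertices and form $G[K_{w+1}]$. It has $n+O(w)$ vertices (delete extras at the end) and
\[
(w+1)^2 e(G)+m\binom{w+1}{2}\le (w+1)^2\Big(\tfrac{19}{6\ln 3}m\ln m+O(m)\Big)+O(nw).
\]
Since $(w+1)m\approx n$, this equals $\tfrac{19}{6\ln3}(w+1)n\ln(n/w)+O(nw)$. Universality follows because $T$ embeds in $G$, so $T[K_{w+1}]$ embeds in $G[K_{w+1}]$.

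\textbf{Lower bound.} Consider the family of graphs consisting of a clique $K_w$ joined to every vertex of a tree on the remaining vertices; these have treewidth $w$ (or use $K_w$ joined to stars). I would follow the argument of~\cite{gyori2025} and count edges jointly over all embeddings. For each $i\le n/w$, the host must contain at least roughly $w$ vertices of degree at least $\sim n/i$ outside the previously used high-degree vertices. Summing $w\cdot n/i$ over $i$ gives $nw\ln(n/w)-O(nw)$. This recovers the factor two over~\cite{kaulwood2025} by counting edges with both endpoints of high degree only once, exactly as done for trees.
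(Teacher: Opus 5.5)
\textbf{The upper-bound reduction is false.} Your upper bound rests on the claim that every $n$-vertex graph of treewidth $w$ is a subgraph of $T[K_{w+1}]$ for some tree $T$ on about $n/(w+1)$ vertices. In other words, you need a tree-partition with parts of size at most $w+1$. This fails for every $w\ge 2$, and not by a constant factor.

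Take the fan: an apex $z$ joined to all vertices of a path $p_1\cdots p_{n-1}$. It has treewidth $2$; for larger $w$, join $K_{w-1}$ to a long path. Suppose $\phi$ is a tree-partition with parts of size at most $b$, and let $x=\phi(z)$.
\begin{itemize}
\item Every $p_i$ is adjacent to $z$, so it lies in a part indexed by $N_T[x]$.
\item If consecutive $p_i,p_{i+1}$ both lie outside the part of $x$, say in parts $y,y'\in N_T(x)$, then $y=y'$. Otherwise $x,y,y'$ would span a triangle in $T$.
\item The part of $x$ holds at most $b-1$ path vertices. Deleting them leaves at most $b$ subpaths, each inside a single part, so $n-1\le (b-1)+b^2$.
\end{itemize}
Hence the fan on $13$ vertices is not a subgraph of $T[K_3]$ for \emph{any} tree $T$, and in general parts of size $\Omega(\sqrt n)$ are needed. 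Bounded treewidth does not give bounded tree-partition-width without a degree bound. Neither your first-appearance/contraction scheme nor the fallback of losing a constant factor can work. Your edge count for the blow-up is fine; it is Lemma~\ref{lem: size treewidth}.

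\textbf{What is missing, and the lower bound.} The tree-universality of $U_{n^*,3}$ cannot be used as a black box. Already for trees, the embeddings into $U_{n,3}$ do not route tree edges along edges of $T_{h,3}$. They go through the Type~1--3 edges: a vertex sees all its descendants, the subtrees of its predecessors, and half of a neighbouring subtree. The paper therefore re-runs the recursive embedding directly on the blow-up.
\begin{itemize}
\item The separators are whole bags $B_z$ of a normal tree-decomposition (Lemma~\ref{lem:aux-sep-treewidth}, leading to Lemmas~\ref{lem:sep-treewidth}, \ref{cor: sep treew1} and~\ref{cor: sep treew2}).
\item Each separator of size $w+1$ is placed on an entire root clique $R$, whose vertices are adjacent to whole subtrees. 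This is what accommodates high-degree vertices such as a fan's apex, and it is the content of Lemma~\ref{lem: main-treewidth}.
\end{itemize}

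Your lower bound is a sketch that skips the crucial step. Knowing there are about $jw$ vertices of degree $\gtrsim n/j$ only yields the Kaul--Wood bound $\frac12 nw\ln(n/w)$. The factor two requires that the edges charged at stage $j$ are new. Moreover, your family ($K_w$ joined to a tree) does not even force $jw$ vertices of degree about $n/j$.

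The paper instead uses $H_j$, consisting of $j$ disjoint copies of $K_{w,\lfloor n/j\rfloor-w}$. At each stage it colours $w$ new $A$-side vertices and counts only their edges to uncoloured vertices. A count of ``blocked'' copies, whose $B$-side already contains $w+1$ coloured vertices, guarantees $w$ available vertices, each with at least $\lfloor n/j\rfloor-2w$ uncoloured edges. Your sketch does not handle previously used vertices sitting in the large sides of the new copies, and that is exactly what this argument is for.
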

The factor $w+1$ in the upper bound arises because, in general, every minimal vertex separator that splits a graph of treewidth $w$ into three parts has size $w+1$. In the special case $w=1$, trees behave differently: every minimal separator that partitions a tree, except for the path, into three parts has size one rather than two. This explains the sharper result for $s(n)$. Finally, let us remark that also in the setting of graphs with a given treewidth, it would be interesting to determine $\lim_{n\to \infty}\frac{s_w(n)}{nw\ln(n/w)}$, or at least, to justify its existence.

\section{Basic constructions \& proof overview}\label{sec:proof-strategy}
We establish Theorem \ref{thm:main} by explicitly constructing universal graphs with $\frac{19}{6\ln3}n\ln n+O(n)$ edges. As mentioned in the introduction, we use ternary trees as guiding building structures for the universal graphs. In order to make the construction more accessible, we also explore a simpler construction on binary trees that already improves upon~\cite{chung1983}. We introduce some notation first. For $d\ge 2$ consider the full $d$-ary tree~$T_{h,d}$ of height $h\ge -1$ with levels~$0$ to~$h$ (if $h \ge 0$) on the vertex set $V_{h,d}$ given by
\[
V_{-1,d}=\emptyset
\quad
\text{and}
\quad
V_{h,d}=\bigcup_{0\leq \ell\leq h}\{1,2,\dots,d\}^\ell,
\quad 
\text{for }
h\geq 0.
\]
For a level $0\le \ell<h$
and a vertex $v\in\{1,\dots,d\}^\ell$, the \emph{children} of~$v$ are $v1, \dots, vd\in\{1,\dots,d\}^{\ell+1}$, where we use an economic notation for elements of $\{1,\dots,d\}^\ell$, e.g.~$13231\in\{1,2,3\}^5$. 
Additionally, for~$v\in V_{h,d}$, let $D_v$ denote the set of all \emph{descendants} of $v$,~i.e., all vertices in $V_{h,d}\setminus \{v\}$ with prefix~$v$. 

For $a \in\mathbb{N}$, define $v\pm a$ as the $a$-th successor/predecessor of $v$ using the lexicographical order imposed on the level of $v$, and using the rules $\text{succ}(d\dots d) := 1\dots1$ and $\text{pred}(1\dots1):=d\dots d$.
For example, if $v=d(d-1)$, then $v+1=dd$, $v+2=11$ and if $v=dd$, then $v-d = (d-1)d$.
We compare vertices at different levels lexicographically by attaching $0$'s to the shorter one, where $0$ is  the smallest character in the alphabet.
Moreover, for $T_{h,d}$ we introduce an additional ordering on the vertices such that on every full $d-$ary (sub-)tree the root comes last.
Precisely, we define the relation $``\succ"$ as the reversed lexicographical order, that is for $x,y \in V_{h,d}$, $x\neq y$, we set $x\succ y$ if $x$ is lexicographically smaller than~$y$.
In the proof we will show that a certain graph $U'$ with vertices in $V_{h,d}$ is universal for the class of forests. In particular, we will do so by finding an isomorphic subgraph of the first tree $T'$ in the given forest at the \emph{smallest} $|V(T')|$ vertices of $U'$ (with respect to $\succ$). After that, we find an isomorphic subgraph of the second tree $T''$ at the smallest $|V(T'')|$ vertices of $U'\setminus T'$, and so forth. This process can be interpreted as the graph $U'$ being eaten up by the trees, hence we call $\succ$ the \emph{eating order}. Figure~\ref{fig:ternarytree} shows $T_{3,3}$ and  the eating order.

\begin{figure}[tb]
    \centering
\begin{tikzpicture}[font=\sffamily][scale = 0.85]

"\clip (0cm,0cm) rectangle (14.8cm,6cm);
\coordinate (v0) at (7.4,5.5);
\coordinate (v1) at (2.62307692307692,4.06666666666667);
\coordinate (v2) at (7.4,4.06666666666667);
\coordinate (v3) at (12.1769230769231,4.066666666666677);
\coordinate (v11) at (1.03076923076923,2.63333333333333);
\coordinate (v12) at (2.62307692307692,2.63333333333333);
\coordinate (v13) at (4.21538461538462,2.63333333333333);
\coordinate (v21) at (5.80769230769231,2.63333333333333);
\coordinate (v22) at (7.4,2.63333333333333);
\coordinate (v23) at (8.99230769230769,2.63333333333333);
\coordinate (v31) at (10.5846153846154,2.633333333333333);
\coordinate (v32) at (12.1769230769231,2.633333333333333);
\coordinate (v33) at (13.7692307692308,2.63333333333333);
\coordinate (v111) at (0.5,1);
\coordinate (v112) at (1.03076923076923,1);
\coordinate (v113) at (1.56153846153846,1);
\coordinate (v121) at (2.09230769230769,1);
\coordinate (v122) at (2.62307692307692,1);
\coordinate (v123) at (3.15384615384615,1);
\coordinate (v131) at (3.68461538461538,1);
\coordinate (v132) at (4.21538461538462,1);
\coordinate (v133) at (4.74615384615385,1);
\coordinate (v211) at (5.27692307692308,1);
\coordinate (v212) at (5.80769230769231,1);
\coordinate (v213) at (6.33846153846154,1);
\coordinate (v221) at (6.86923076923077,1);
\coordinate (v222) at (7.4,1);
\coordinate (v223) at (7.93076923076923,1);
\coordinate (v231) at (8.46153846153846,1);
\coordinate (v232) at (8.99230769230769,1);
\coordinate (v233) at (9.52307692307692,1);
\coordinate (v311) at (10.0538461538462,1);
\coordinate (v312) at (10.5846153846154,1);
\coordinate (v313) at (11.1153846153846,1);
\coordinate (v321) at (11.6461538461538,1);
\coordinate (v322) at (12.1769230769231,1);
\coordinate (v323) at (12.7076923076923,1);
\coordinate (v331) at (13.2384615384615,1);
\coordinate (v332) at (13.7692307692308,1);
\coordinate (v333) at (14.3,1);
\draw (v0) -- (v1);
\draw (v0) -- (v2);
\draw (v0) -- (v3);
\draw (v1) -- (v11);
\draw (v1) -- (v12);
\draw (v1) -- (v13);
\draw (v2) -- (v21);
\draw (v2) -- (v22);
\draw (v2) -- (v23);
\draw (v3) -- (v31);
\draw (v3) -- (v32);
\draw (v3) -- (v33);
\draw (v11) -- (v111);
\draw (v11) -- (v112);
\draw (v11) -- (v113);
\draw (v12) -- (v121);
\draw (v12) -- (v122);
\draw (v12) -- (v123);
\draw (v13) -- (v131);
\draw (v13) -- (v132);
\draw (v13) -- (v133);
\draw (v21) -- (v211);
\draw (v21) -- (v212);
\draw (v21) -- (v213);
\draw (v22) -- (v221);
\draw (v22) -- (v222);
\draw (v22) -- (v223);
\draw (v23) -- (v231);
\draw (v23) -- (v232);
\draw (v23) -- (v233);
\draw (v31) -- (v311);
\draw (v31) -- (v312);
\draw (v31) -- (v313);
\draw (v32) -- (v321);
\draw (v32) -- (v322);
\draw (v32) -- (v323);
\draw (v33) -- (v331);
\draw (v33) -- (v332);
\draw (v33) -- (v333);
\draw[fill=white] (v0) circle (0.24cm);
\draw[fill=white] (v1) circle (0.24cm);
\draw[fill=white] (v2) circle (0.24cm);
\draw[fill=white] (v3) circle (0.24cm);
\draw[fill=white] (v11) circle (0.24cm);
\draw[fill=white] (v12) circle (0.24cm);
\draw[fill=white] (v13) circle (0.24cm);
\draw[fill=white] (v21) circle (0.24cm);
\draw[fill=white] (v22) circle (0.24cm);
\draw[fill=white] (v23) circle (0.24cm);
\draw[fill=white] (v31) circle (0.24cm);
\draw[fill=white] (v32) circle (0.24cm);
\draw[fill=white] (v33) circle (0.24cm);
\draw[fill=white] (v111) circle (0.24cm);
\draw[fill=white] (v112) circle (0.24cm);
\draw[fill=white] (v113) circle (0.24cm);
\draw[fill=white] (v121) circle (0.24cm);
\draw[fill=white] (v122) circle (0.24cm);
\draw[fill=white] (v123) circle (0.24cm);
\draw[fill=white] (v131) circle (0.24cm);
\draw[fill=white] (v132) circle (0.24cm);
\draw[fill=white] (v133) circle (0.24cm);
\draw[fill=white] (v211) circle (0.24cm);
\draw[fill=white] (v212) circle (0.24cm);
\draw[fill=white] (v213) circle (0.24cm);
\draw[fill=white] (v221) circle (0.24cm);
\draw[fill=white] (v222) circle (0.24cm);
\draw[fill=white] (v223) circle (0.24cm);
\draw[fill=white] (v231) circle (0.24cm);
\draw[fill=white] (v232) circle (0.24cm);
\draw[fill=white] (v233) circle (0.24cm);
\draw[fill=white] (v311) circle (0.24cm);
\draw[fill=white] (v312) circle (0.24cm);
\draw[fill=white] (v313) circle (0.24cm);
\draw[fill=white] (v321) circle (0.24cm);
\draw[fill=white] (v322) circle (0.24cm);
\draw[fill=white] (v323) circle (0.24cm);
\draw[fill=white] (v331) circle (0.24cm);
\draw[fill=white] (v332) circle (0.24cm);
\draw[fill=white] (v333) circle (0.24cm);
\node at (v0) {\scriptsize$\emptyset$};
\node at (v1) {\scriptsize$1$};
\node at (v2) {\scriptsize$2$};
\node at (v3) {\scriptsize$3$};
\node at (v11) {\scriptsize$11$};
\node at (v12) {\scriptsize$12$};
\node at (v13) {\scriptsize$13$};
\node at (v21) {\scriptsize$21$};
\node at (v22) {\scriptsize$22$};
\node at (v23) {\scriptsize$23$};
\node at (v31) {\scriptsize$31$};
\node at (v32) {\scriptsize$32$};
\node at (v33) {\scriptsize$33$};
\node at (v111) {\scriptsize$111$};
\node at (v112) {\scriptsize$112$};
\node at (v113) {\scriptsize$113$};
\node at (v121) {\scriptsize$121$};
\node at (v122) {\scriptsize$122$};
\node at (v123) {\scriptsize$123$};
\node at (v131) {\scriptsize$131$};
\node at (v132) {\scriptsize$132$};
\node at (v133) {\scriptsize$133$};
\node at (v211) {\scriptsize$211$};
\node at (v212) {\scriptsize$212$};
\node at (v213) {\scriptsize$213$};
\node at (v221) {\scriptsize$221$};
\node at (v222) {\scriptsize$222$};
\node at (v223) {\scriptsize$223$};
\node at (v231) {\scriptsize$231$};
\node at (v232) {\scriptsize$232$};
\node at (v233) {\scriptsize$233$};
\node at (v311) {\scriptsize$311$};
\node at (v312) {\scriptsize$312$};
\node at (v313) {\scriptsize$313$};
\node at (v321) {\scriptsize$321$};
\node at (v322) {\scriptsize$322$};
\node at (v323) {\scriptsize$323$};
\node at (v331) {\scriptsize$331$};
\node at (v332) {\scriptsize$332$};
\node at (v333) {\scriptsize$333$};
\node at ($(v0)+(0.45,0)$) {\scriptsize$\mathit{40}$};
\node at ($(v1)+(0.4,-0.1)$) {\scriptsize$\mathit{39}$};
\node at ($(v2)+(0.4,-0.1)$) {\scriptsize$\mathit{26}$};
\node at ($(v3)+(0.4,-0.1)$) {\scriptsize$\mathit{13}$};
\node at ($(v11)+(0.33,-0.3)$) {\scriptsize$\mathit{38}$};
\node at ($(v12)+(0.33,-0.3)$) {\scriptsize$\mathit{34}$};
\node at ($(v13)+(0.33,-0.3)$) {\scriptsize$\mathit{30}$};
\node at ($(v21)+(0.33,-0.3)$) {\scriptsize$\mathit{25}$};
\node at ($(v22)+(0.33,-0.3)$) {\scriptsize$\mathit{21}$};
\node at ($(v23)+(0.33,-0.3)$) {\scriptsize$\mathit{17}$};
\node at ($(v31)+(0.33,-0.3)$) {\scriptsize$\mathit{12}$};
\node at ($(v32)+(0.33,-0.3)$) {\scriptsize$\mathit{8}$};
\node at ($(v33)+(0.33,-0.3)$) {\scriptsize$\mathit{4}$};
\node at ($(v111)+(0,-0.4)$) {\scriptsize$\mathit{37}$};
\node at ($(v112)+(0,-0.4)$) {\scriptsize$\mathit{36}$};
\node at ($(v113)+(0,-0.4)$) {\scriptsize$\mathit{35}$};
\node at ($(v121)+(0,-0.4)$) {\scriptsize$\mathit{33}$};
\node at ($(v122)+(0,-0.4)$) {\scriptsize$\mathit{32}$};
\node at ($(v123)+(0,-0.4)$) {\scriptsize$\mathit{31}$};
\node at ($(v131)+(0,-0.4)$) {\scriptsize$\mathit{29}$};
\node at ($(v132)+(0,-0.4)$) {\scriptsize$\mathit{28}$};
\node at ($(v133)+(0,-0.4)$) {\scriptsize$\mathit{27}$};
\node at ($(v211)+(0,-0.4)$) {\scriptsize$\mathit{24}$};
\node at ($(v212)+(0,-0.4)$) {\scriptsize$\mathit{23}$};
\node at ($(v213)+(0,-0.4)$) {\scriptsize$\mathit{22}$};
\node at ($(v221)+(0,-0.4)$) {\scriptsize$\mathit{20}$};
\node at ($(v222)+(0,-0.4)$) {\scriptsize$\mathit{19}$};
\node at ($(v223)+(0,-0.4)$) {\scriptsize$\mathit{18}$};
\node at ($(v231)+(0,-0.4)$) {\scriptsize$\mathit{16}$};
\node at ($(v232)+(0,-0.4)$) {\scriptsize$\mathit{15}$};
\node at ($(v233)+(0,-0.4)$) {\scriptsize$\mathit{14}$};
\node at ($(v311)+(0,-0.4)$) {\scriptsize$\mathit{11}$};
\node at ($(v312)+(0,-0.4)$) {\scriptsize$\mathit{10}$};
\node at ($(v313)+(0,-0.4)$) {\scriptsize$\mathit{9}$};
\node at ($(v321)+(0,-0.4)$) {\scriptsize$\mathit{7}$};
\node at ($(v322)+(0,-0.4)$) {\scriptsize$\mathit{6}$};
\node at ($(v323)+(0,-0.4)$) {\scriptsize$\mathit{5}$};
\node at ($(v331)+(0,-0.4)$) {\scriptsize$\mathit{3}$};
\node at ($(v332)+(0,-0.4)$) {\scriptsize$\mathit{2}$};
\node at ($(v333)+(0,-0.4)$) {\scriptsize$\mathit{1}$};
\node at ($(v31)+(-0.2,0.7)$) {\scriptsize$32-2$};
\node at ($(v33)+(0,1.15)$) {\scriptsize$32+2$};"

\draw[dashed,->] ($(v32)+(-0.18,0.18)$) to[bend right=30] ($(v23)+(0.18,0.18)$);

\draw[dashed] ($(v32)+(0.18,0.18)$) to[bend left=15] ($(v33)+(0.4,0.6)$);
\draw[dashed] ($(v33)+(0.4,0.6)$) to[bend right=80] ($(v33)+(0.4,1.0)$);
\draw[dashed] ($(v33)+(0.4,1.0)$) to ($(v33)+(0,1.0)$);
\node at ($(v33)+(-0.2,1.0)$) {$\dots$};
\draw[dashed] ($(v11)+(0.4,1.0)$) to ($(v11)+(-0.5,1.0)$);
\draw[dashed] ($(v11)+(-0.5,1.0)$) to[bend right=70] ($(v11)+(-0.5,0.5)$);
\draw[dashed,->] ($(v11)+(-0.5,0.5)$) to ($(v11)+(-0.18,0.18)$);

\draw[loosely dotted, rounded corners=9pt,line width=0.35mm] ($(v0)+(0.7,0.4)$) -- ($(v0)+(-0.5,0.4)$) -- ($(v1)+(-0.4,0.5)$) -- ($(v11)+(-0.4,0.5)$) -- ($(v111)+(-0.4,0.5)$) -- ($(v111)+(-0.4,-0.7)$) -- ($(v133)+(0.26,-0.7)$) -- ($(v133)+(0.26,0.5)$) -- ($(v13)+(0.6,0.3)$) --  ($(v1)+(0.7,-0.2)$) -- ($(v0)+(0.7,-0.2)$) -- cycle;

\end{tikzpicture}
\caption{\small The ternary tree $T_{3,3}$ on vertex set $V_{3,3}$. The vertex labels are denoted within the circles and the position of each vertex in the eating order is next to the circle. For example, the vertex $32$ is the $8$-th vertex to be eaten. Further, the arithmetic operations $32+2=11$ and $32-2=23$ are indicated by the dashed arrows.
Moreover, the eating order on $V_{3,3}$ also determines the vertex set of $U_{n,3}$ of height $3$. For instance, the vertex set $V(U_{14,3})$ is visualized by the dotted curve ($26$ vertices were eaten). 
}
\label{fig:ternarytree}
\end{figure}

With this notation at hand, we construct for each $h$ a graph $T^*_{h,d}$ on $V_{h,d}$ by adding for every $v\in V_{h,d}$ all edges that contain $v$ and
\begin{typesofedges}
    \item\label{en: type 1 edges} every descendant of $v$, i.e., every vertex in $D_v$; 
    \item\label{en: type 2 edges} $v-1, \dots, v-(d-1)$ and every vertex in $D_{v-1}\cup \dots \cup D_{v-(d-1)}$.
\end{typesofedges}
In the case $d=3$ we add a few more edges, namely those containing $v\in V_{h,3}$ and
\begin{typesofedges}[resume]
    \item\label{en: type 3 edges} every vertex in the half (rounded down) of $\{z\} \cup D_z$ that is eaten last, where $z$ is the lexicographically smallest child of $v+1$.
\end{typesofedges}
The necessity of having additional edges in the case $d=3$ arises due to a delicate detail that we will discuss later.
With $T_{h,d}^*$ at hand, we now consider the following graphs that are central in what follows. 
\begin{definition}\label{def:U}
    For $d, n \in \mathbb{N}$, let $h\ge 0$ be such that $|V_{h-1,d}|<n\le|V_{h,d}|$. We define $U_{n,d}$ as the induced subgraph of $T^*_{h,d}$ on the $n$ vertices of $T^*_{h,d}$ that are eaten last.
\end{definition}
For example, Figure \ref{fig:ternarytree} shows the vertex sets $V(U_{14,3})$ and $V(U_{40,3})$. The crucial step in our proof, and the one that requires most work, is to verify that for $d\in \{2,3\}$, in this way, we obtain universal graphs, see Sections~\ref{sec: binary trees} and \ref{sec:ternary trees}.
\begin{lemma}\label{lem:univ}
    For $n\in \mathbb{N}$ and $d\in \{2,3\}$ the graphs $U_{n,d}$ are universal.
\end{lemma}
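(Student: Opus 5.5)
We prove the statement in a stronger, inductive form: for $d\in\{2,3\}$, every forest $F$ with at most $n$ vertices embeds into $U_{n,d}$ by the eating procedure. The trees of $F$ are processed one after another. Each tree $T$ is mapped onto the $|V(T)|$ vertices of the current host that are smallest with respect to $\succ$. The point of the construction is that the uneaten part of $T^*_{h,d}$, or of $U_{n,d}$, after removing an initial segment of the eating order, is again a graph of the form $U_{m,d}$, up to a shift of labels. So the invariant to carry through the induction is: after eating $k$ vertices, the remaining graph still contains all edges of types (1)--(3) among its vertices. This lets the forest be handled tree by tree, and the main work is embedding a single tree $T$ on $m$ vertices into the first $m$ vertices of a host isomorphic to $U_{N,d}$ with $N\ge m$. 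We proceed by induction on $m$, or on the height.

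\emph{Binary case $d=2$.} Take a tree $T$ with $m$ vertices. By the classical separator lemma we find a vertex $x$ such that $T-x$ splits into components, and these can be grouped into two forests $F_1,F_2$. We choose the grouping so that $|F_1|$ and $|F_2|$ fit the two subtrees below a vertex $v$ of the appropriate level. The vertex $v$ is the last eaten vertex of the segment, that is, the root of a full subtree plus some preceding siblings. We map $x$ to $v$ and embed $F_1,F_2$ recursively into the two children blocks $D_{v1}\cup\{v1\}$ and $D_{v2}\cup\{v2\}$ (or into $D_{v-1}$ when the segment is not aligned). Every edge from $x$ to $F_i$ must be present in the host. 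This follows because $v$ is adjacent to all its descendants (type 1), and, for the misaligned part, to $v-1$ and $D_{v-1}$ (type 2). When $m$ is not a full block size, we split the segment along the level structure: the segment of the eating order is a union of full subtrees of decreasing heights. The separator is then chosen so that the piece containing the neighbours of $x$ goes into subtrees that $v$ reaches through type 1 or type 2 edges.

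\emph{Ternary case $d=3$.} Here we remove up to two vertices $x,y$ of $T$ so that $T-\{x,y\}$ splits into three forests of sizes fitting the three child subtrees. The first step is a combinatorial partition lemma. For any tree and any target sizes $a_1\ge a_2\ge a_3$ consistent with the ternary block sizes, there are either one separating vertex and a grouping of the components into three forests, or two adjacent/path-connected separators $x,y$ such that the parts attached to $x$ and to $y$ have the required sizes up to the small slack allowed by the block sizes. We prove it by walking along the path from a centroid and applying a discrete intermediate value argument to the sizes of the branches. Next we map $x$ to $v$, and $y$ to a vertex adjacent to $v$ and to the relevant child block. This is where the type 3 edges enter: $y$ placed at $v$'s neighbourhood must see the last-eaten half of $\{z\}\cup D_z$, with $z$ the smallest child of $v+1$. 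We then recurse into the three child blocks, and check that each recursive host again satisfies the invariant.

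The main obstacle will be the ternary partition lemma together with the verification that the edges between $y$ and its forest always exist. When the forest hanging at $y$ must be split across a block boundary, only about half of the next block is reachable, which is why type 3 edges take exactly the half eaten last. We will first attempt to show that one can always choose the split so that $y$'s forest occupies at most half of $\{z\}\cup D_z$ in eating order. If that fails, we will fall back on a finer case analysis according to the sizes of the largest components of $T-x$.
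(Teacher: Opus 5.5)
Your outer framework is sound and matches Lemma~\ref{lem:embedding} with Remark~\ref{rem:univ}: you embed into initial segments of the eating order, use that the remainder is again a final segment of some $T^*_{h,d}$, and treat a forest tree by tree. One correction there: the remainder is in general \emph{not} isomorphic to a $U_{m,d}$. The last $7$ vertices of $T^*_{3,2}$ induce $K_7$ minus two edges sharing a vertex, whereas $U_{7,2}=T^*_{2,2}$ is $K_7$ minus two disjoint edges. So the invariant has to be admissibility.

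The genuine gap is the core step, already for $d=2$. You put the separator $x$ at the last-eaten vertex $v$ of the segment and appeal to type~2 edges for the misaligned part. But type~2 edges join $v$ to $v-1$ and $D_{v-1}$. Apart from the wrap-around at $1\cdots1$, these are lexicographically smaller than $v$, hence eaten \emph{after} $v$, i.e.\ outside the segment. Inside the segment $v$ sees hardly more than $D_v$. For example, in $T^*_{h,2}$ with $m=|T^*_{h-1,2}|+|T^*_{h-2,2}|$, the segment is $\{2\}\cup D_2$ followed by $\{12\}\cup D_{12}$, and $v=12$ has no neighbour in $\{22\}\cup D_{22}\cup D_{21}$. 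Yet all of $\{2\}\cup D_2$ must be filled by components of $T-x$. Each of them contains a neighbour of $x$, and your recursion does not control where that neighbour lands.

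Sending $F_1,F_2$ into the two child blocks of $v$ instead requires an exact fit, which one separator cannot give. The spider with three legs of length two has no vertex whose removal leaves components that group into two forests of size $3$.

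What is missing is the paper's hinge mechanism:
\begin{itemize}
\item The separator goes to $r_{A'}$, the root of the \emph{first}-eaten child of the host's root. It is adjacent to all of $A'$ by type~1 edges and to the later sibling copies by type~2 edges.
\item Each piece is embedded into an initial segment of an admissible window formed by $r_{A'}$ and the next two level-two subtrees in eating order (Observation~\ref{obs:2}). This is where type~2 edges between consecutive level-two subtrees enter, including across the boundary of $A'$. Pieces may therefore run past the position of $r_{A'}$, and $s$ is inserted there afterwards.
\item The pieces come from the three-part Lemma~\ref{lem:sep-main}, with $m,M$ tuned so that $|F_1|+|F_2|\ge N+X$. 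This makes everything below $r_{A'}$ eaten before $s$ is placed, lets $F_1$ and then $F_2$ fit into consecutive windows, and puts $F_3$ after $r_{A'}$.
\end{itemize}

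For $d=3$ the decisive lemma is absent. You announce an attempt at a two-separator partition with a fallback case analysis, but this is the heart of the argument. Moreover, ``three forests fitting the three child subtrees'' again asks for an exact fit that cannot be achieved in general. What is needed is Lemma~\ref{lem:sep-main-cons1} for $|F|\le 5N+X+2$ and Lemma~\ref{lem:sep-main-cons2} beyond it:
\begin{itemize}
\item $s_1$ splits $F$ into $F_1,F_2,F_4,\overline F$, and a vertex $s_2\in\overline F$ splits $\overline F$ into $F_3,F_5,F_6$.
\item The six pieces are embedded in the interleaved order $F_1,\dots,F_6$, so part of $\overline F$ may be placed before $s_1$.
\item Each piece fits its current window of three level-two subtrees, up to one extra vertex for $F_6$ in the extreme case.
\item $|F_1|+|F_2|\ge\frac32N+X$.
\end{itemize}

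Then $s_1$ goes to $r_{A'}$ and $s_2$ to $r_2=r_{A'}-1$. The type~3 edges of $r_2$ reach the last-eaten half of the smallest child block of $r_2+1=r_{A'}$. That is exactly where the at most $\lfloor N/2\rfloor$ vertices of $A'$ left over after $F_1,F_2$ lie. These may be taken by $F_3$, which can contain neighbours of $s_2$.

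Your placement has this orientation backwards: you put $x$ at $v$, and $y$ at a neighbour of $v$ that should see the smallest child block of $v+1$. The vertex carrying those type~3 edges is $v$ itself, so the second separator must sit at $v$ and the first at $v+1$.
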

In order to show universality, our starting point is the following simple and well-known property of separating vertices in trees \cite{chung1983}.
Let $|G| = |V(G)|$ be the number of vertices of a graph $G$. 
\begin{lemma}\label{lem:sep-chung}
    Let $t \in\mathbb{N}_0$ and $F$ be a forest with $|F|\geq t+1$. Then for some vertex $s$, there is a forest $F' \subset F \setminus s$ such that
    $t \leq |F'| \leq 2t$.
\end{lemma}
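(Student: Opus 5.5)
The idea is to choose $s$ so that every component of $F\setminus s$ has at most $t$ vertices, while $F\setminus s$ still has at least $t$ vertices in total. A greedy union of components then lands in $[t,2t]$. Throughout, ``forest $F'\subset F\setminus s$'' is read as a union of connected components of $F\setminus s$; this is the form needed later for recursive embedding. The case $t=0$ is trivial: take any vertex $s$ and $F'=\emptyset$. So assume $t\ge 1$.

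\emph{Step 1 (choosing $s$).} There are two cases.

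If every component of $F$ has at most $t$ vertices, let $s$ be an arbitrary vertex. Every component of $F\setminus s$ lies inside a component of $F$, so it has at most $t$ vertices. Moreover $|F\setminus s|=|F|-1\ge t$.

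Otherwise some component $C$ has $|C|\ge t+1$. Root $C$ at any vertex $r$, and for $v\in C$ let $S_v$ be the vertex set of the subtree of $C$ rooted at $v$. Among the vertices with $|S_v|\ge t+1$ (the root $r$ is one), choose $s$ of maximal depth.
\begin{itemize}
\item Each child $u$ of $s$ is deeper than $s$, so by maximality $|S_u|\le t$.
\item The sets $S_u$ over the children $u$ of $s$ are exactly the components of $S_s\setminus s$. They have total size $|S_s|-1\ge t$.
\end{itemize}

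\emph{Step 2 (greedy selection).} Let $K_1,\dots,K_m$ be the components of $F\setminus s$ used in Step 1. In the first case these are all components of $F\setminus s$; in the second case they are the subtrees $S_u$ for the children $u$ of $s$. In both cases each $|K_i|\le t$ and $\sum_i |K_i|\ge t$.

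Add $K_1,K_2,\dots$ one at a time and stop at the first index $j$ with $|K_1|+\dots+|K_j|\ge t$; such a $j$ exists because the total is at least $t$. Set $F'=K_1\cup\dots\cup K_j$. Before $K_j$ was added the sum was at most $t-1$, and $|K_j|\le t$, so
\[
t\le |F'|\le (t-1)+t\le 2t.
\]
Each $K_i$ is a component of $F\setminus s$, so $F'$ is a union of components of $F\setminus s$, as required.

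\emph{Main point.} The only step that needs care is the choice of $s$ in the second case. Choosing $s$ of maximal depth among vertices with $|S_v|\ge t+1$ is what gives both properties at once: each child subtree has at most $t$ vertices, yet together they have at least $t$. The components of $F\setminus s$ containing $s$'s parent or lying outside $C$ may be larger than $t$, but they are never used in the greedy step, so no bound on them is needed.
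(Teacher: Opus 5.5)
Your proof is correct and complete. Choosing $s$ as a deepest vertex with $|S_v|\ge t+1$ forces every child subtree to have at most $t$ vertices while their total is at least $t$. The greedy union then has size in $[t,2t]$ (indeed at most $2t-1$ when $t\ge 1$). The cases $t=0$ and ``all components of $F$ have at most $t$ vertices'' are handled properly.

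The paper gives no proof of this lemma; it cites it as a simple, well-known fact due to Chung and Graham. Your argument is the standard one. Your reading of $F'$ as a union of components of $F\setminus s$ is exactly the form the paper relies on later, for instance in the proof of Lemma~\ref{lem:sep-main}, where $F\setminus(F_3\cup s)$ is split into its trees and the parts are embedded separately.
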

Lemma \ref{lem:sep-chung} allows to split a forest, by removing one vertex, into two forests containing a suitable number of vertices. This paves the way to construct universal graphs using the binary tree as a base structure and to pursue a divide and conquer approach for the embedding of \emph{any} forest. This is the main approach followed in~\cite{chung1983}. We extend this idea by using ternary trees as building blocks for the universal graphs, based on the following refined version of Lemma~\ref{lem:sep-chung}.
By a slight abuse of terminology, let (the possibly empty) forests $F_1,\dots,F_t$ be a \emph{partition} of a forest $F$ if they are vertex disjoint and $F_1 \cup \dots \cup F_t =F$. 
\begin{lemma}\label{lem:sep-main}
    Let $0 \leq m$, $2m \leq M$ and $F$ be a forest with $|F|\geq M+1$. Then there exists a vertex $s \in F$ and partition $F_1, F_2, F_3$ of $F\setminus s$ such that 
    \[
    m \leq |F_3| \leq M,
    \quad
    |F_1| \leq |F| -1 -M,
    \quad
    \text{and}
    \quad
    |F_2| \leq |F_1|.
    \]
\end{lemma}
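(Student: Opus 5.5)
The plan is to pick $s$ by a standard "deepest heavy vertex" argument and then to distribute the components of $F\setminus s$ among $F_1,F_2,F_3$ by a greedy bin-packing rule. The conditions are easier to work with after a reformulation. Since $|F_1|+|F_2|+|F_3|=|F|-1$, the condition $|F_1|\le |F|-1-M$ is equivalent to $|F_2|+|F_3|\ge M$. So we need to split the components of $F\setminus s$ into three groups such that:
\begin{itemize}
\item one group, $F_3$, has size in $[m,M]$;
\item the larger of the other two groups is called $F_1$;
\item the union of the two groups other than $F_1$ has at least $M$ vertices.
\end{itemize}

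\textbf{Step 1 (choice of $s$).} If every component of $F$ has at most $M$ vertices, we let $s$ be a leaf or an isolated vertex, so that all pieces of $F\setminus s$ have size at most $M$. Otherwise we root a component $T$ with $|T|\ge M+1$ and take $s$ to be a deepest vertex whose rooted subtree $T_s$ has at least $M+1$ vertices. In either case, the components of $F\setminus s$ consist of:
\begin{itemize}
\item "small" pieces $C_1,\dots,C_k$, each of size at most $M$;
\item in the second case, also one remainder $R=F\setminus T_s$, which may be large.
\end{itemize}
In the second case the children pieces of $s$ together have $|T_s|-1\ge M$ vertices, and $|F|-1-|R|\ge M$.

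\textbf{Step 2 (choosing $F_3$).} We order the small pieces by decreasing size. If the largest has size at least $m$, we let $F_3$ be that piece; it has size at most $M$. Otherwise all small pieces have size less than $m$, and we add them one by one until the total first reaches at least $m$. Since $m\le M/2$, the total then stays below $2m\le M$. In the first case of Step 1 the small pieces have total $|F|-1\ge M\ge m$, and in the second case the children of $s$ have total at least $M\ge m$, so this process succeeds. The freedom here is that $F_3$ may also be grown further, up to size $M$, by the same one-by-one rule. This freedom will be needed in Step 3.

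\textbf{Step 3 (balancing $F_1,F_2$), the expected main obstacle.} The remaining pieces, including $R$ if present, have to be put into $F_1$ and $F_2$ so that $|F_2|\le|F_1|$ and $|F_2|+|F_3|\ge M$. We split into two cases.
\begin{itemize}
\item \emph{$R$ exists and is at least as large as the total of the remaining small pieces.} We put $R$ alone into $F_1$ and all other leftover small pieces into $F_2$. Then $|F_2|\le |F_1|$ holds, and $|F_2|+|F_3|=|F|-1-|R|\ge M$.
\item \emph{Otherwise.} We distribute the leftover pieces by the rule "put the next largest piece into the currently smaller bin" and call the larger bin $F_1$. The two bins then differ by at most the size of the largest leftover piece.
\end{itemize}
The delicate point in the second case is that $F_1$ may still exceed $|F|-1-M$. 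My plan is to repair this by exchanges. While $|F_2|+|F_3|<M$, $F_1$ is large, and because the bins are balanced, $F_1$ consists of several pieces. We then move small pieces from $F_1$ into $F_3$ while keeping $|F_3|\le M$. Once moving the next piece would push $|F_3|$ past $M$, that piece has size greater than $M-|F_3|$, so it can instead be swapped with $F_2$ or used to replace $F_3$.

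I expect that carefully verifying that this exchange process terminates with all three inequalities intact is the real work. It should use precisely the two facts available here: $2m\le M$, and every piece other than $R$ has size at most $M$. If the exchange argument gets messy, my fallback is an extremal choice. Among all partitions with $m\le|F_3|\le M$ and $|F_2|\le|F_1|$ (which exist by Steps 1–2), I would take one minimizing $|F_1|$ and then derive a contradiction from $|F_1|>|F|-1-M$ by moving a single piece.
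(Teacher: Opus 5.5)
\textbf{There is a genuine gap, and it sits in Step 1, not in Step 3.} Your exchange argument and your extremal fallback both keep $s$ fixed. But for an $s$ chosen by a local rule (an arbitrary leaf, or the deepest vertex with $|T_s|\ge M+1$), a valid partition of $F\setminus s$ may not exist at all. So no repair at that $s$ can succeed. Recall that the $F_i$ must be unions of components of $F\setminus s$.

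Take $F=P_7$, $M=5$, $m=0$, rooted at its centre. The centre is the only vertex with $|T_s|\ge 6$, so Step 1 picks it, and $F\setminus s$ is two paths on $3$ vertices each. Since $|F|-1-M=1$, we need $|F_1|\le 1$. This forces $F_1=F_2=\emptyset$ and $|F_3|=6>M$.

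Rooting at a leaf does not rescue the rule. Let $s$ carry a pendant leaf $\rho$ and two pendant paths on $3$ vertices each, so $|F|=8$ and $M=5$. Rooted at $\rho$, the vertex $s$ is the deepest heavy one, and $|F_1|\le 2$ again forces both paths into $F_3$. Your first case fails in the same way for $F=P_4\cup P_3$, $M=5$: removing any leaf leaves pieces of sizes $3,3$ or $4,2$, and $|F_1|\le 1$.

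In each example the lemma does hold, but only for a different separator, for instance the neighbour of a leaf. So the choice of $s$ must depend on the sizes. The claim in Step 3 that a balanced $F_1$ "consists of several pieces" is also unjustified, but that is secondary.

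\textbf{The missing idea} is the one the paper uses: optimise over $s$ and $F_3$ \emph{simultaneously}.
\begin{itemize}
\item First add edges so that $F$ is a tree.
\item Choose a pair $(s,F_3)$, with $F_3$ a union of components of $F\setminus s$, such that $|F_3|$ is maximal subject to $m\le |F_3|\le M$. Such a pair exists, e.g.\ by your Step 2 or by Lemma~\ref{lem:sep-chung} with $t=m$.
\item If $|F_3|=M$, take $F_2=\emptyset$.
\item Otherwise $F\setminus(F_3\cup s)$ has at least two components. If it were a single tree, replace $s$ by its unique neighbour $s'$ in that tree. Then $F_3\cup\{s\}$ is a component of $F\setminus s'$ of size $|F_3|+1\le M$, contradicting maximality.
\item Let $F_2$ be the smallest of these components and $F_1$ the rest. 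Then $|F_2|\le |F_1|$, and maximality applied to $F_2\cup F_3$ gives $|F_2|+|F_3|>M$. This is exactly your reformulated condition.
\end{itemize}
The ``move $s$ one step'' argument is what replaces your balancing and exchange step, and it is unavailable as long as $s$ is frozen.
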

With the choice $m=t$ and $M=2t$, this implies Lemma \ref{lem:sep-chung} with $F' = F_3$. However, Lemma~\ref{lem:sep-main} provides additional structural information about the remaining forest $F^*=F \setminus (s\cup F_3)$. That is, if $|F_3|<M$, then $F^*$ is the disjoint union of at least \emph{two} trees. 
We use Lemma \ref{lem:sep-main} to iteratively embed forests into certain graphs, that we call \emph{admissible}. As it will turn out, these graphs have the handy property that after removing vertices following the eating order the result is again admissible. 
\begin{definition}\label{def:admissible}
    Let $d\geq 2$. A graph $A\neq \emptyset$ is called \emph{admissible}, if there exists $h$ such that $A$ is isomorphic to the induced subgraph on the last $|A|$ vertices in the eating order of $T_{h,d}^*$. The eating order on $A$ is thereby naturally inherited from $T_{h,d}^*$.
\end{definition}
Due to the $d$-ary base structure of the graphs $T_{h,d}^*$, we can recursively describe admissible graphs.
\begin{remark}
\label{rem:admissible-recursive}
    Let $d \ge 2$ and let $A$ be an admissible graph. Then there exists $T^*_{h,d}$ for some $h$ such that one of the following holds.
    \begin{enumerate}\setlength{\itemsep}{0pt}
        \item $A$ only consists of the root of $T^*_{h,d}$, that we denote by $r_A$, and which is first in the eating order.
        \item\label{rem:admissible2} There exists an admissible subgraph $A'$ of $T^*_{h-1,d}$ such that $A$ is isomorphic to one of the $d$ possible ways shown in Figure \ref{fig:admissible-c}.
        Further, $A$ inherits the eating order from $T^*_{h,d}$ as follows. The vertices of $A'$ are eaten first, given by the order on $A'$. Next, the vertices of the up to $d-1$ copies of $T_{h-1,d}^*$ are eaten one after another, given by the eating order on $T_{h-1,d}^*$. Finally, $r_A$ is eaten.
    \end{enumerate}
\end{remark}
\begin{figure}[ht!]
    \centering
    \begin{subfigure}[t]{0.3\textwidth} 
        \centering
        \begin{tikzpicture}[scale=0.8, every node/.style={transform shape}]
        \node[draw, circle, fill=black, inner sep=2pt, label=right:{$r_A$}] (root2) at (0,-1) {};
        \node[draw, circle, fill=black, inner sep=2pt, label={[label distance=0.9cm]-90:$A'$}] (child1) at (-1.5,-2) {};
        \node[] at (-2,-4) {}; 
        \node[] at (2,-4) {}; 
        
        \draw (root2) -- (child1);
        
        \draw (-2,-4) -- (-1.5,-2)  -- (-1,-3.5) decorate [decoration={zigzag,segment length=2mm}] {-- cycle}; 
        \end{tikzpicture}
        \caption{}
        \label{fig:admissible-a}
    \end{subfigure}
    \begin{subfigure}[t]{0.3\textwidth} 
        \centering
        \begin{tikzpicture}[scale=0.8, every node/.style={transform shape}]
        \node[draw, circle, fill=black, inner sep=2pt, label=right:{$r_A$}] (root2) at (0,-1) {};

        \node[draw, circle, fill=black, inner sep=2pt, label={[label distance=1.3cm]-90:$T^*_{h-1,d}$}] (child1) at (-1.5,-2) {};
        \node[draw, circle, fill=black, inner sep=2pt, label={[label distance=0.9cm]-90:$A'$}] (child2) at (0,-2) {};
        \node[] at (2,-4) {}; 

        \draw (root2) -- (child1);
        \draw (root2) -- (child2);
        
        \draw (-2.15,-4) -- (-1.5,-2) -- (-0.85,-4) -- cycle;
        \draw (-0.5,-4) -- (0,-2)  -- (0.5,-3.5) decorate [decoration={zigzag,segment length=2mm}] { -- cycle};
        \end{tikzpicture}
        \caption{}
        \label{fig:admissible-b}
    \end{subfigure}
    \begin{subfigure}[t]{0.3\textwidth} 
        \centering
        \begin{tikzpicture}[scale=0.8, every node/.style={transform shape}]
        \node[draw, circle, fill=black, inner sep=2pt, label=right:{$r_A$}] (root2) at (0,-1) {};

        \node[draw, circle, fill=black, inner sep=2pt, label={[label distance=1.3cm]-90:$T^*_{h-1,d}$}] (child1) at (-2.8,-2) {};
        \node[draw, circle, fill=black, inner sep=2pt, label={[label distance=1.3cm]-90:$T^*_{h-1,d}$}] (child2) at (0,-2) {};
        \node[draw, circle, fill=black, inner sep=2pt, label={[label distance=0.9cm]-90:$A'$}] (child3) at (1.5,-2) {};

        
        \draw (root2) -- (child1);
        \draw (root2) -- (child2);
        \draw (root2) -- (child3);
        
        \draw (-3.45,-4) -- (-2.8,-2) -- (-2.15,-4) -- cycle; 
        \draw (-0.65,-4) -- (0,-2) -- (0.65,-4) -- cycle;  
        \draw (1,-4) -- (1.5,-2)  -- (2,-3.5) decorate [decoration={zigzag,segment length=2mm}] {-- cycle}; 

        \node[draw, circle, fill=black, inner sep=0.5pt] (k1) at (-1.6,-2) {};
        \node[draw, circle, fill=black, inner sep=0.5pt, label={[label distance=0.1cm]-90:$\leq d-1$ times}] (k2) at (-1.4,-2) {};
        \node[draw, circle, fill=black, inner sep=0.5pt] (k3) at (-1.2,-2) {};
        
        \end{tikzpicture}
        \caption{}
        \label{fig:admissible-c}
    \end{subfigure}
    \caption{\small The admissible graph $A$ is given by its root $r_A$, the admissible subgraph $A'$ of $T^*_{h-1,d}$ and up to $d-1$ copies of $T^*_{h-1,d}$. The corresponding vertex sets and the graph structure are indicated in the figure. Moreover, although not depicted, as an induced subgraph, $A$ contains all edges of $T^*_{h,d}$ on $V(A)$.}
    \label{fig:admissible}
\end{figure}

\noindent
For an admissible graph $A$ as in Figure~\ref{fig:admissible-b},~\ref{fig:admissible-c} with $h \ge 2$ and a forest $F$ with $|F| < |A|$ a key quantity in the proofs is the \emph{rest} of $F$ that is informally defined as follows. See also Figure~\ref{fig: rest} for an illustration. Let $V\subset V(A)$ be the first $|F|$ vertices in the eating order of $A$, and imagine that we remove $V$ from $A$ to obtain a graph $\widetilde{A}$. Then the rest is the size of the \emph{smallest subgraph rooted at level two} of $\widetilde{A}$ (if such a subgraph exists). If we set $X$ as the size of the smallest and $N$ as the size of the largest subgraph of $A$ rooted at level two, this leads the following definition.
\begin{definition}
\label{def:rest}
    Let $0<X\leq N$.
    For a forest $F$ let the \emph{rest} $\wavytriangle_{N,X}(F)$ be defined by
    $$\wavytriangle_{N,X}(F)=X-|F|\text{ for }|F|< X
    \quad
    \text{and}
    \quad
    \wavytriangle_{N,X}(F)=N-x\text{ for }|F|\geq X,$$
    where, if $|F|\ge X$, then $0\le x<N$ is unique such that $|F|=x+kN+X$ for some $k \ge 0$.
\end{definition}

\begin{figure}
\begin{subfigure}[t]{0.5\textwidth}
    \centering
    \begin{tikzpicture}[scale=0.9, every node/.style={transform shape}]

\node[draw, circle, fill, inner sep=2pt, label=right:{$r_A$}] (root2) at (0,-0.4) {};

\node[draw, circle, fill, inner sep=2pt, label={[label distance=1mm]180:$r_{T^*_{h-1,2}}$}] (child1) at (-1.2,-1) {};
\node[draw, circle, fill, inner sep=2pt, label=right:{$r_{A'}$}] (child2) at (1.2,-1) {};

\node[draw, circle, fill, inner sep=2pt, label={[label distance=1cm]-90:$N$}] (child11) at (-0.6,-1.5) {};
\node[draw, circle, fill, inner sep=2pt, label={[label distance=1cm]-90:$N$}] (child13) at (-1.8,-1.5) {};

\node[draw, black, circle, fill, inner sep=2pt, label={[label distance=1cm]-90:$N$}] (child21) at (0.6,-1.5) {};
\node[draw, black, circle, fill, inner sep=2pt, label={[label distance=0.75cm,color=gray]-90:$X$}] (child22) at (1.8,-1.5) {};
\node[label={[label distance=0.2cm]-90:$Y$}] (child22) at (1.84,-1.5) {};

\draw (root2) -- (child1);
\draw (root2) -- (child2);

\draw (child1) -- (child11);
\draw (child1) -- (child13);

\draw (child2) -- (child21);
\draw (child2) -- (child22);

\draw (-0.6,-1.5) -- (-1.1,-3.15) -- (-0.1,-3.15) -- cycle;
\draw (-1.8,-1.5) -- (-2.3,-3.15) -- (-1.3,-3.15) -- cycle;

\draw (0.6,-1.5) -- (0.1,-3.15) -- (1.1,-3.15) -- cycle;
\draw[draw=gray] (1.8,-1.5) -- (1.3,-3.15) decorate [decoration={zigzag,segment length=2mm}] {-- (2.3,-2.65)} -- cycle;

\draw (1.8,-1.5) -- (1.53,-2.4) decorate [decoration={zigzag,segment length=2mm}] {-- (2.11,-2.21)} -- cycle;

    \end{tikzpicture}
    \caption{$|F|<X$}\label{fig:example_A1}
\end{subfigure}
\hfill
\begin{subfigure}[t]{0.5\textwidth}
    \centering
    \begin{tikzpicture}[scale=0.9, every node/.style={transform shape}]

\node[draw, circle, fill, inner sep=2pt, label=right:{$r_A$}] (root2) at (0,-0.4) {};

\node[draw, circle, fill, inner sep=2pt, label={[label distance=1mm]180:$r_{T^*_{h-1,2}}$}] (child1) at (-1.2,-1) {};
\node[draw, circle, fill, inner sep=2pt, label=right:{$r_{A'}$}] (child2) at (1.2,-1) {};

\node[draw, circle, fill, inner sep=2pt, label={[label distance=1cm]-90:$N$}] (child11) at (-0.6,-1.5) {};
\node[draw, circle, fill, inner sep=2pt, label={[label distance=1cm]-90:$N$}] (child13) at (-1.8,-1.5) {};

\node[draw, black, circle, fill, inner sep=2pt, label={[label distance=1cm,color=gray]-90:$N$}, label={[label distance=0.38cm]-90:$Y$}] (child21) at (0.6,-1.5) {};
\node[draw, black, circle, fill, inner sep=2pt, color=gray, label={[label distance=0.65cm,color=gray]-90:$X$}] (child22) at (1.8,-1.5) {};

\draw (root2) -- (child1);
\draw (root2) -- (child2);

\draw (child1) -- (child11);
\draw (child1) -- (child13);

\draw (child2) -- (child21);
\draw[draw=gray] (child2) -- (child22);

\draw (-0.6,-1.5) -- (-1.1,-3.15) -- (-0.1,-3.15) -- cycle;
\draw (-1.8,-1.5) -- (-2.3,-3.15) -- (-1.3,-3.15) -- cycle;

\draw[draw=gray] (0.6,-1.5) -- (0.1,-3.15) -- (1.1,-3.15) -- cycle;
\draw[draw=gray] (1.8,-1.5) -- (1.3,-3.15) decorate [decoration={zigzag,segment length=2mm}] {-- (2.3,-2.65)} -- cycle;

\draw (0.6,-1.5) -- (0.2,-2.8) decorate [decoration={zigzag,segment length=2mm}] {-- (0.84,-2.3)} -- cycle;

    \end{tikzpicture}
    \caption{$X\leq |F|<N+X$}\label{fig:example_A2}
\end{subfigure}

\vspace{0.5cm}

\begin{subfigure}[t]{0.5\textwidth}
    \centering
    \begin{tikzpicture}[scale=0.9, every node/.style={transform shape}]

\node[draw, circle, fill, inner sep=2pt, label=right:{$r_A$}] (root2) at (0,-0.4) {};

\node[draw, circle, fill, inner sep=2pt, label={[label distance=1mm]180:$r_{T^*_{h-1,2}}$}] (child1) at (-1.2,-1) {};
\node[draw, circle, fill, inner sep=2pt,color=gray,label={[color=gray]right:$r_{A'}$}] (child2) at (1.2,-1) {};

\node[draw, circle, fill, inner sep=2pt, label={[label distance=1cm,color=gray]-90:$N$}, label={[label distance=0.38cm]-90:$Y$}] (child11) at (-0.6,-1.5) {};
\node[draw, circle, fill, inner sep=2pt, label={[label distance=1cm]-90:$N$}] (child13) at (-1.8,-1.5) {};

\node[draw, black, circle, fill, inner sep=2pt,color=gray, label={[label distance=1cm,color=gray]-90:$N$}] (child21) at (0.6,-1.5) {};
\node[draw, black, circle, fill, inner sep=2pt,color=gray, label={[label distance=0.65cm,color=gray]-90:$X$}] (child22) at (1.8,-1.5) {};

\draw (root2) -- (child1);
\draw[draw=gray] (root2) -- (child2);

\draw (child1) -- (child11);
\draw (child1) -- (child13);

\draw[draw=gray] (child2) -- (child21);
\draw[draw=gray] (child2) -- (child22);

\draw[draw=gray] (-0.6,-1.5) -- (-1.1,-3.15) -- (-0.1,-3.15) -- cycle;
\draw (-1.8,-1.5) -- (-2.3,-3.15) -- (-1.3,-3.15) -- cycle;

\draw (-0.6,-1.5) -- (-1,-2.8) decorate [decoration={zigzag,segment length=2mm}] {-- (-0.36,-2.3)} -- cycle;

\draw[draw=gray] (0.6,-1.5) -- (0.1,-3.15) -- (1.1,-3.15) -- cycle;
\draw[draw=gray] (1.8,-1.5) -- (1.3,-3.15) decorate [decoration={zigzag,segment length=2mm}] {-- (2.3,-2.65)} -- cycle;

    \end{tikzpicture}
    \caption{$N+X\leq |F|<2N+X$}\label{fig:example_A3}
\end{subfigure}
\hfill
\begin{subfigure}[t]{0.5\textwidth}
    \centering
    \begin{tikzpicture}[scale=0.9, every node/.style={transform shape}]

\node[draw, circle, fill, inner sep=2pt, label=right:{$r_A$}] (root2) at (0,-0.4) {};

\node[draw, circle, fill, inner sep=2pt, label={[label distance=1mm]180:$r_{T^*_{h-1,2}}$}] (child1) at (-1.2,-1) {};
\node[draw, circle, fill, inner sep=2pt,color=gray,label={[color=gray]right:$r_{A'}$}] (child2) at (1.2,-1) {};

\node[draw, circle, fill, inner sep=2pt,color=gray, label={[label distance=1cm,color=gray]-90:$N$}] (child11) at (-0.6,-1.5) {};
\node[draw, circle, fill, inner sep=2pt, label={[label distance=1cm,color=gray]-90:$N$}, label={[label distance=0.38cm]-90:$Y$}] (child13) at (-1.8,-1.5) {};

\node[draw, black, circle, fill, inner sep=2pt,color=gray, label={[label distance=1cm,color=gray]-90:$N$}] (child21) at (0.6,-1.5) {};
\node[draw, black, circle, fill, inner sep=2pt,color=gray, label={[label distance=0.65cm,color=gray]-90:$X$}] (child22) at (1.8,-1.5) {};

\draw (root2) -- (child1);
\draw[draw=gray] (root2) -- (child2);

\draw[draw=gray] (child1) -- (child11);
\draw (child1) -- (child13);

\draw[draw=gray] (child2) -- (child21);
\draw[draw=gray] (child2) -- (child22);

\draw[draw=gray] (-0.6,-1.5) -- (-1.1,-3.15) -- (-0.1,-3.15) -- cycle;
\draw[draw=gray] (-1.8,-1.5) -- (-2.3,-3.15) -- (-1.3,-3.15) -- cycle;

\draw (-1.8,-1.5) -- (-2.2,-2.8) decorate [decoration={zigzag,segment length=2mm}] {-- (-1.56,-2.3)} -- cycle;

\draw[draw=gray] (0.6,-1.5) -- (0.1,-3.15) -- (1.1,-3.15) -- cycle;
\draw[draw=gray] (1.8,-1.5) -- (1.3,-3.15) decorate [decoration={zigzag,segment length=2mm}] {-- (2.3,-2.65)} -- cycle;

    \end{tikzpicture}
    \caption{$2N+X\leq|F|<3N+X$}\label{fig:example_A4}
\end{subfigure}
    \caption{Let $h\geq 2$. The figure illustrates an admissible graph $A$ for $d=2$, constructed by three copies of $T_{h-2,2}^*$ of size $N$ and one admissible subgraph of $T_{h-2,2}^*$ of size $X$,~cf.~Figure \ref{fig:admissible-b}. The size of the largest (resp.~smallest) subgraph of $A$ rooted at level two is $N$ (resp.~$X$). Let $F$ be any forest with $|F|<3N+X$. 
    After removing the first $|F|$ vertices in the eating order of $A$, the rest $\protect\wavytriangle(F)$ is given by the size of the smallest subgraph rooted at level two, indicated in the figure by $Y$.}
    \label{fig: rest}
\end{figure}

\noindent If $N$ and $X$ are clear from the context, then we abbreviate $\wavytriangle_{N,X}(F)$ by $\wavytriangle(F)$.
In order to use an inductive argument to embed a forest into an admissible graph $A$, it is crucial to describe admissible \emph{sub}graphs of $A$. 
Remark \ref{rem:admissible-recursive} already shows that for any child $c$ of $r_A$, the subgraph rooted at $c$ is also admissible. However, there are several more admissible graphs hidden in $A$.
The first observation directly follows from Definition \ref{def:admissible}. 
\begin{observation}\label{obs:1}
    Let $U$ be the resulting graph after removing the first $0\leq t < |A|$ vertices in the eating order of~$A$. Then $U$ is again admissible.
\end{observation}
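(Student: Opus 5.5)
Observation~\ref{obs:1} follows by unwinding Definition~\ref{def:admissible} together with the transitivity of ``being eaten last''. Since $A$ is admissible, there is an $h$ and an isomorphism $\varphi$ from $A$ onto the induced subgraph $T^*_{h,d}[L]$, where $L$ is the set of the last $|A|$ vertices of $V_{h,d}$ in the eating order $\succ$. Since the eating order on $A$ is inherited from $T^*_{h,d}$ via $\varphi$, the first $t$ vertices of $A$ in its eating order are mapped by $\varphi$ onto the first $t$ vertices of $L$ with respect to $\succ$. Call this set $S$.

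The key step is to note that $L\setminus S$ is exactly the set of the last $|A|-t$ vertices of $V_{h,d}$ in the eating order. Indeed, $L$ is a final segment of the linear order $\succ$ on $V_{h,d}$, and removing an initial segment of a final segment leaves a final segment. Formally, if $u\in L\setminus S$ and $w$ is eaten after $u$, then $w\in L$ because $L$ is a final segment. Also $w\notin S$: otherwise $u$, being eaten before $w\in S$ and lying in $L$, would belong to the initial segment $S$ of $L$. Counting gives $|L\setminus S|=|A|-t\geq 1$, so the set is nonempty.

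Now $U=A\setminus \varphi^{-1}(S)$, and $\varphi$ restricts to an isomorphism from $U$ onto $T^*_{h,d}[L\setminus S]$. This holds because an induced subgraph of an induced subgraph is the induced subgraph on the smaller vertex set. Hence $U\neq\emptyset$ is isomorphic to the induced subgraph on the last $|U|$ vertices of $T^*_{h,d}$, with the same $h$, so $U$ is admissible. Its inherited eating order coincides with the restriction of the order on $A$, which is what later inductive arguments need.

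There is no real obstacle here. The only point requiring care is to use the \emph{same} $h$ as for $A$ and to check that the eating orders are compatible under $\varphi$. Both follow directly from the definition.
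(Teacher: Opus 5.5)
Your proposal is correct and takes the same approach as the paper, which only remarks that the observation follows directly from Definition~\ref{def:admissible}. You spell out that unwinding explicitly: removing an initial segment from a final segment of $\succ$ on $V_{h,d}$, with the same $h$, leaves a final segment, and the isomorphism restricts to the induced subgraph.
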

Let $D_v$ be empty if $v$ is not in $A$.
Given a vertex $r\in A$, we consider the induced subgraph on $r$, a child $c$ of $r$ with descendants, and up to $d-1$ predecessors of $c$ with descendants.
The second observation is that the (recursive) structure of $A$ from Remark~\ref{rem:admissible-recursive} implies that this subgraph is admissible. 
\begin{observation}\label{obs:2} 
    Let $r\in A$ and $c$ be a child of $r$. Let $1\leq t \leq d-1$ be such that $c-t$ is lexicographically larger than $c$ and set $D=\bigcup_{0\leq i\leq t}(\{c-i\} \cup D_{c-i})$. 
    Then the induced subgraph of $A$ with root $r$ and descendants $D$ is admissible.
\end{observation}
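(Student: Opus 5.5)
The plan is to construct an explicit relabelling map $\varphi$ from the vertex set $\{r\}\cup D$ into $V_{h',d}$, where $h'$ is the height of the subtree of $T_{h,d}$ rooted at $r$. We then check that $\varphi$ sends this vertex set onto a final segment of the eating order of $T^*_{h',d}$ and that it carries edges to edges. This mirrors the recursive description in Remark~\ref{rem:admissible-recursive}: $r$ will play the role of $r_A$, and the subtrees at $c-t,\dots,c$ will play the roles of $A'$ and of the up to $d-1$ full copies of $T^*_{h'-1,d}$ in Figure~\ref{fig:admissible-c}.

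\textbf{Step 1 (shape).} Let $r$ sit at level $\ell$, so $h'=h-\ell$. The hypothesis on $c-t$ means that $c-1,\dots,c-t$ are siblings of $c$, i.e.\ children of $r$, with no wrap-around to another parent. In the eating order $\succ$, the subtree of $c$ is eaten, then that of $c-1$, and so on up to that of $c-t$, and $r$ is eaten after all of them. Since $A$ consists of a final segment of $\succ$ and $D_v$ is interpreted inside $A$, the following holds. Among the $t+1$ subtrees, the earliest-eaten one that meets $A$ is a final segment of a full tree $T^*_{h'-1,d}$, hence admissible. Every subtree eaten after it lies entirely in $A$ and is a full copy. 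The vertex $r$ lies in $A$ because it is eaten after its descendants. Thus $\{r\}\cup D$ has exactly the shape of Figure~\ref{fig:admissible-c} with at most $t\le d-1$ full copies.

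\textbf{Step 2 (the map).} Write each vertex of $\{r\}\cup D$ as $r\,j\,w$ with $j\in\{c_{\mathrm{last}}-t,\dots,c_{\mathrm{last}}\}$, where $c_{\mathrm{last}}$ denotes the last letter of $c$. Set
\[
\varphi(r\,j\,w)=(j-c_{\mathrm{last}}+t+1)\,w \quad\text{and}\quad \varphi(r)=\emptyset .
\]
This map shifts the first letter after the prefix $r$ so that $c-t\mapsto 1$ and $c\mapsto t+1$, while the children $t+2,\dots,d$ of the new root are regarded as already eaten. Because $\varphi$ preserves prefixes and the lexicographic order restricted to the subtree, it preserves $\succ$. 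Hence its image is a final segment of the eating order of $T^*_{h',d}$.

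\textbf{Step 3 (edges) and the main obstacle.} Type~1 edges are ancestor--descendant pairs, and $\varphi$ preserves these. For type~2 edges, the relations $u=v-i$ between same-level vertices inside the subtree of $r$ are preserved, because $\varphi$ is an order-preserving shift on each level. Descendant sets $D_{v-i}$ are preserved as well. For type~3 edges, the lexicographically smallest child of $v+1$ and the last-eaten half of its subtree are likewise defined through the order, so these edges also transfer.

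The delicate point, which I expect to be the main obstacle, is wrap-around. For a vertex $v=r11\dots1$, the predecessors $v-i$ in $T^*_{h,d}$ leave the subtree of $r$, and for $v=rd\dots d$ the successor $v+1$ does the same. In $T^*_{h',d}$, by contrast, the images wrap around to $d\dots d$, respectively $1\dots1$. My first attempt will be to show that these offending images are absent. After the shift, the new labels $t+2,\dots,d$ are empty, so wrapped predecessors of $1\dots1$ land in eaten subtrees whenever $t<d-1$. The edge case $t=d-1$ has to be handled separately, either by checking directly which of the wrapped neighbours survive, or by observing that we only ever need $\{r\}\cup D$ to \emph{contain} a spanning copy of the admissible graph. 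The latter suffices for all subsequent embedding arguments. If an exact isomorphism fails there, I would state and use Observation~\ref{obs:2} in this spanning-supergraph form.
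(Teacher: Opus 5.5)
Your Step~1 misreads the hypothesis, and the case it drops is the one the paper needs. However one reads the condition on $c-t$, it does not make $c-1,\dots,c-t$ siblings of $c$. They are the predecessors of $c$ on its whole level, so if $c$ is among the first children of $r$ (e.g.\ $c=r1$), they are the last children of $r-1$.

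This cousin configuration is the one the paper uses. It occurs in Figures~\ref{fig:IH1_a}, \ref{fig:IH1_a ternary} and \ref{fig:IH1_b ternary}, and in the graphs $U^{(i)}$ of Section~\ref{subsec:proof}, where the root $r_{A'}$ takes over the last subtrees of $r_{A'}-1$. In that case your map $\varphi(r\,j\,w)$ is not defined on $D$. Your claim that the root is covered by ancestor--descendant edges also fails: $r$ reaches these subtrees only through its type~2 edges to $D_{r-1}$. Moreover, $\varphi$ does not preserve $\succ$ at $r$, because in $A$ the vertex $r$ is eaten \emph{before} $D_{r-1}$. So the final-segment property must be checked directly rather than deduced from order preservation. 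The repair is to list $c-t,\dots,c$ lexicographically (a consecutive block of the level, straddling $r-1$ and $r$), send them to $1,\dots,t+1$, and keep suffixes.

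The wrap-around obstruction is real, but neither of your remedies works.
\begin{itemize}
\item Your claim that wrapped predecessors land in eaten subtrees for $t<d-1$ fails at level~$1$. There $\{1-1,\dots,1-(d-1)\}=\{2,\dots,d\}$ in $T^*_{h',d}$, so vertex $1$ is adjacent to every vertex of $D_2\cup\dots\cup D_{t+1}$ in the image, for \emph{every} $t\ge1$. Likewise $1\dots1$ is adjacent to $D_{d\dots d}$ on each level, and for $d=3$ the type~3 edges of $d$ go into the subtree of $11$.
\item In $T^*_{h,d}$, when $r$ is not the global root, $c-t$ has no neighbour in $D_{c-t+1}$, apart from (for $d=3$) half of the subtree of the first child of $c-t+1$.
\item Hence the admissible target has strictly \emph{more} edges than $\{r\}\cup D$. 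Your fallback needs $\{r\}\cup D$ to contain a spanning copy of the admissible graph, but the inclusion goes the other way, and that is exactly what breaks the induction.
\end{itemize}
Concretely, take $d=2$, $A=T^*_{3,2}$, $r=2$, $c=22$, $t=1$. The induced subgraph on $\{2,21,22,211,212,221,222\}$ has $16$ edges, while every admissible graph on $7$ vertices has at least $19$. The same count arises for $r=1$, $c=11$, where $c-1=22$. So with wrap-around as defined, the observation is false, and the paper's one-line appeal to Remark~\ref{rem:admissible-recursive} overlooks this too.

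The fix is to define type~2 and type~3 edges without wrap-around.
\begin{itemize}
\item The proof of Lemma~\ref{lem:embedding} never uses a wrapped edge. Separating vertices only need edges to the subtrees of their (later-eaten) predecessors, and into the first child subtree of their successor.
\item The count in Lemma~\ref{lem:size} only decreases.
\item Under this convention your extended $\varphi$ is an exact isomorphism onto a final segment. A predecessor or successor that leaves the consecutive block corresponds exactly to one that does not exist in $T^*_{h',d}$.
\end{itemize}
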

As we will see, these (simple) observations allow us to prove that any admissible graph $A$ is universal for all forests $F$ with at most $|A|$ vertices, and consequently for all trees of that size. Furthermore, when $|F|<|A|$ we not only establish that $A$ contains a subgraph $U$ isomorphic to $F$, we even ensure that there exists an \emph{embedding} of $F$ into $A$, which (slightly abusing notation) additionally requires $U$ to be given by the first $|F|$ vertices in the eating order of $A$. 
\begin{definition}
    For any forest $F$ and admissible graph $A$, a mapping $\lambda:V(F) \to V(A)$ is an \emph{embedding} of $F$ into $A$ if 
\begin{itemize}
    \item $\lambda(V(F))$ consists of the first $|F|$ vertices in the eating order of $A$, and
    \item for every edge $uv$ in $F$, the vertices $\lambda(u)$ and $\lambda(v)$ are adjacent in $A$.
\end{itemize}
\end{definition}
Note that if there exists an embedding $\lambda$ of $F$ into $A$, then by Observation \ref{obs:1} the graph $A\setminus\lambda(V(F))$ is admissible.
With all these definitions and properties at hand we can now state a central ingredient of our proof. It establishes that all admissible graphs $A$ are universal for forests~$F$ with $|F|<|A|$ in the stronger sense that there is always an embedding of $F$ into $A$. 

\begin{lemma}\label{lem:embedding} 
    Let $d\in\{2,3\}$, $A$ be an admissible graph and $F$ a forest with $|F| < |A|$. Then there exists an embedding $\lambda$ of $F$ into $A$. In particular, $A\setminus \lambda(V(F))$ is admissible.
\end{lemma}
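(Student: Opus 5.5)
We will argue by strong induction on $|A|$, and within a fixed $A$ by induction on $|F|$. We use the recursive description of Remark~\ref{rem:admissible-recursive}. Write $A$ as the root $r_A$, an admissible subgraph $A'$ hanging at the lexicographically largest used child, and up to $d-1$ full copies of $T^*_{h-1,d}$. Since an embedding must occupy exactly the first $|F|$ vertices in the eating order, these vertices are: an initial segment of $A'$, then possibly all of $A'$ followed by an initial segment of the next copies, and never $r_A$ (because $|F|<|A|$).

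\textbf{Trivial and disconnected cases.} If $|F|\le |A'|$ and $|F|<|A'|$, we apply induction to $A'$ directly. If $F$ is disconnected, we embed its components one after another. After each step the remaining graph is admissible by Observation~\ref{obs:1}, and it is strictly smaller, so the inductive hypothesis applies. This reduces us to connected $F$ (or, more conveniently, lets us treat forests freely as unions of pieces).

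\textbf{Main step: $|F|\ge |A'|$.} Here we apply Lemma~\ref{lem:sep-main}. The separator vertex $s$ should be mapped to a vertex $v$ whose Type~1 and Type~2 edges reach all the regions where the pieces $F_1,F_2,F_3$ will go. The natural choice is $v=r_{A'}$, or more generally the root of the first full copy of $T^*_{h-1,d}$ that is not completely eaten. This vertex is adjacent to all of its descendants (Type~1) and to up to $d-1$ predecessors together with their descendants (Type~2). We choose the parameters $m,M$ in terms of the sizes $N$ and $X$ of level-two subtrees and the rest $\wavytriangle_{N,X}$ from Definition~\ref{def:rest}. The goal is that $F_3$ exactly fills the partially eaten region below $v$, up to sizes allowed by the recursion, while $F_1$ and $F_2$ fill the earlier-eaten subtrees.

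Each piece is then embedded into an admissible subgraph identified by Observation~\ref{obs:2}: a root $r$, a child $c$, and predecessors $c-i$ with their descendants. By induction we obtain embeddings that occupy initial segments. Edges from $s$ to its neighbours in each $F_i$ are realised because $v$ is adjacent via Type~1/2 edges to everything in those regions. The inequality $|F_2|\le|F_1|$ and $|F_1|\le|F|-1-M$ serve to ensure that the two leftover forests fit into two sibling subtrees ordered as in the eating order. The order in which pieces are placed must make their union, together with $v$, exactly the first $|F|$ vertices of $A$. We achieve this by placing $s$ last among its subtree (roots are eaten last) and distributing the remaining vertices between $F_1,F_2$ to fill regions completely, using the disconnected-forest flexibility above.

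\textbf{Expected obstacle.} The hard part is the bookkeeping when $|F_3|<M$ forces $F^*=F\setminus(s\cup F_3)$ to split into at least two trees, and the region to be filled straddles the boundary between $A'$ and a full copy. In that case the rest $\wavytriangle(F)$ can be small, and one piece must go into the half-eaten subtree of the lexicographically smallest child of $v+1$. For $d=2$ Type~2 edges suffice. For $d=3$ I expect a genuine gap: $s$ must reach into the subtree of the smallest child $z$ of $v+1$, which is neither a descendant nor a predecessor. This is exactly what the Type~3 edges (to the last-eaten half of $\{z\}\cup D_z$) provide. I would handle this by a case analysis on which interval $|F|$ lies in relative to $X,N+X,2N+X,\dots$ (as in Figure~\ref{fig: rest}), choosing $m=\wavytriangle(F)$ and $M=2m$ or $M$ equal to a full subtree size in each case. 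I would then verify that the Type~3 half-subtree is large enough whenever the smaller piece $F_2$ must be placed there, which should follow from $|F_2|\le|F_1|$ giving $|F_2|\le\frac12(|F|-1-|F_3|)$.
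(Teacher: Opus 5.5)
The reduction to trees via the double induction is fine. The main step, however, has a genuine gap for $d=3$: a single separator from Lemma~\ref{lem:sep-main} cannot handle $A$ as in Figure~\ref{fig:admissible-c} with $|F|>5N+X+2$.

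\textbf{Why one separator fails.} Every component of $F\setminus s$ contains a neighbour of $s$, and you embed the pieces by the induction hypothesis as black boxes. So $\lambda(s)$ must be adjacent to the whole image of $F\setminus s$. Here the first $|F|$ vertices contain all of $A'$, all of $T^2_{h-1}$ and part of $T^3_{h-1}$, and among them only $r_{A'}$ has this property. The vertex $r_2=r_{A'}-1$ sees $T^2_{h-1}$, $T^3_{h-1}$ and $r_{A'}$, but inside $A'\setminus r_{A'}$ it sees only the last-eaten half of the subtree of the smallest child of $r_{A'}$ (its Type~3 edges).

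With $s$ at $r_{A'}$, the components placed before $s$ must cover all $2N+X$ vertices of $A'\setminus r_{A'}$. The component straddling $r_{A'}$ must then be embedded in an Observation~\ref{obs:2} subgraph rooted at $r_{A'}$, i.e.\ within at most three level-two subtrees. This already fails for $A=T^*_{h,3}$ with $h\ge 3$ (so $X=N\ge 4$) and $F$ a vertex $c$ joined to the roots of three trees $H_1,H_2,H_3$ with $3N-1$ vertices each:
\begin{itemize}
\item For $s=c$, one $H_i$ is one vertex short of the $3N$ vertices of $A'\setminus r_{A'}$. After embedding it, the next window only accepts forests with at most $2N+1<3N-1$ vertices.
\item For any other $s$, the components of $F\setminus s$ avoiding $c$ have at most $3N-2$ vertices in total. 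The component containing $c$ has at least $6N-1>3N$ vertices, so it fits in no window.
\end{itemize}
No choice of $m,M$ repairs this, in particular not $m=\wavytriangle(F)$, $M=2m$. The Type~3 edges cannot help either, since they belong to $r_2$ and not to the vertex carrying your only separator.

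\textbf{The missing idea: cut twice.} This is what Lemma~\ref{lem:sep-main-cons2} does. A separator $s_1$ is placed at $r_{A'}$, and a second separator $s_2$ lies inside one component $\overline F$ of $F\setminus s_1$ and is placed at $r_2$. The remaining parts are ordered $F_1,\dots,F_6$ so that each respects the shrinking window bound $|F_i|\le 2N+\wavytriangle(F_1\cup\dots\cup F_{i-1})$, up to the single exception $I_6$; Lemmas~\ref{lem:sep-main-cons1} and~\ref{lem:sep-main-cons2} provide exactly this.

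The decisive extra condition is $|F_1|+|F_2|\ge\frac32N+X$ for the two pieces outside $\overline F$ that are eaten first. It guarantees that what is left of $A'\setminus r_{A'}$ after $F_1,F_2$ has at most $\lfloor N/2\rfloor$ vertices. It also guarantees that this leftover lies in the last-eaten half of the subtree of the smallest child of $r_{A'}$, which is exactly the Type~3 neighbourhood of $r_2$. This leftover is the only part of $A'$ that pieces of $\overline F\setminus s_2$ can occupy.

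Your inequality $|F_2|\le\frac12(|F|-1-|F_3|)$ is not this condition. What must be controlled is the uncovered part of $A'$ at the point where neighbours of $s_2$ may land there, not whether some piece fits into the half-subtree.

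For $d=2$ one separator does suffice, but your outline still lacks two ingredients. You need the concrete parameters $m=\max\{0,|F|-2N-X-1\}$ and $M=|F|-N-X-1$. You also need to choose $|F_3|$ maximal in the boundary case $|F|=3N+X+2$.
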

With this at hand, Lemma \ref{lem:univ}, which asserts that $U_{n,d}$ is universal for $d\in\{2,3\}$, follows immediately from the next remark.
\begin{remark}\label{rem:univ}
    Let $d\in\{2,3\}$ and $A$ be any admissible graph with $n$ vertices, in particular $A$ could be some $U_{n,d}$. For any tree $T$ with $n$ vertices and any vertex $v$ in $T$, we can embed $T\setminus v$ into $A$ using Lemma \ref{lem:embedding} and then place $v$ at the root $r_A$, which is connected to \emph{all} of its descendants thanks to the \ref{en: type 1 edges} edges. This shows that $A$ is universal.
\end{remark}
The second main ingredient in our proof is the following lemma that counts the edges in $U_{n,d}$. Together with the aforementioned remark, this concludes the proof of Theorem~\ref{thm:main}.
\begin{lemma}\label{lem:size}
If $d\neq 3$, then $U_{n,d}$ has $\frac{d}{\ln d}n \ln n + O(n)$ edges. Moreover, $U_{n,3}$ has $\frac{19}{6\ln3}n \ln n + O(n)$ edges.
\end{lemma}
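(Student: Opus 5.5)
The plan is to count edges vertex by vertex, charging each edge to its endpoint on the higher (or equal) level. Every such total reduces to the \emph{depth sum} $\sum_{w\in U}\mathrm{depth}(w)$, which equals $nh+O(n)$ with $h=\log_d n+O(1)$. This gives matching upper and lower bounds, so we get a two-sided asymptotic rather than only an upper bound.

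\textbf{Setup and the depth sum.} Let $U=U_{n,d}$ with height $h$, so $|V_{h-1,d}|<n\le |V_{h,d}|$ and hence $h=\log_d n+O(1)$. The eating order eats a root after all its descendants. So the last $n$ vertices are closed under taking ancestors: if $w\in U$, then every ancestor of $w$ lies in $U$. Consequently
\[\sum_{v\in U}|D_v\cap U|=\sum_{w\in U}\mathrm{depth}(w).\]
This sum is at most $nh$. For the lower bound, at most $|V_{h-k,d}|\le d^{h-k+1}\le d^{2-k}n$ vertices have depth at most $h-k$. Summing over $k$ shows that the average depth is $h-O(1)$, so the sum equals $nh-O(n)=\frac{1}{\ln d}n\ln n+O(n)$.

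\textbf{Counting each type of edge.} \ref{en: type 1 edges} edges inside $U$ are exactly the pairs $\{v,w\}$ with $w\in D_v\cap U$, so they contribute the depth sum.

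For \ref{en: type 2 edges} with a fixed $1\le i\le d-1$, substitute $u=v-i$. Then $\sum_{v\in U}|D_{v-i}\cap U|$ equals $\sum_u|D_u\cap U|$ up to an error from vertices $u$ whose partner $u+i$ is not in $U$. Vertices of a level inside $U$ form a final segment of that level in the eating order (a lexicographic initial segment), so the exceptional $u$ lie near the cut point of each level, at most $d-1$ of them per level. Such a $u$ at level $\ell$ has $|D_u|\le d^{h-\ell+1}$, so the error is $O(\sum_\ell d^{h-\ell})=O(n)$. Wrap-around effects only matter on levels with at most $d$ vertices, costing $O(n)$. The edges to $v-1,\dots,v-(d-1)$ themselves give only $O(n)$. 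Hence the \ref{en: type 2 edges} edges contribute $(d-1)$ times the depth sum, up to $O(n)$.

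For $d=3$, the \ref{en: type 3 edges} edges are indexed by $z$ ranging over lexicographically smallest children. Each contributes half of $|(\{z\}\cup D_z)\cap U|$, up to rounding. This is because the last-eaten half lies in $U$ whenever $U$ meets that subtree, except for the one subtree straddling the cut at each level, which costs $O(n)$ overall. Siblings whose subtrees lie entirely in $U$ have equal subtree sizes, and only the sibling groups meeting a cut differ, again costing $O(n)$. So the sum over first children is $\frac1d$ of the sum over all vertices of a level, and the \ref{en: type 3 edges} edges contribute $\frac{1}{2d}=\frac16$ of the depth sum, up to $O(n)$.

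\textbf{Disjointness and conclusion.} For the lower bound we check that these families are pairwise disjoint apart from $O(n)$ coincidences. A \ref{en: type 1 edges} edge joins a vertex to one of its descendants, while \ref{en: type 2 edges} and \ref{en: type 3 edges} edges join $v$ to a vertex outside $D_v\cup\{\text{ancestors}\}$. Also $D_z\subset D_{v+1}$ is disjoint from $D_{v-1}\cup D_{v-2}$ unless $v+1\equiv v-2$ or $v+1\equiv v-1$ cyclically, which only happens on levels of size at most $3$. Within \ref{en: type 2 edges}, an edge $\{v,w\}$ with $w\in D_{v-i}$ determines $v$ as the higher endpoint and $i$ uniquely, except on tiny levels. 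Summing gives $d\cdot\frac{1}{\ln d}n\ln n+O(n)$ edges for $d\neq3$, and $(1+2+\frac16)\frac{1}{\ln 3}n\ln n+O(n)=\frac{19}{6\ln 3}n\ln n+O(n)$ edges for $d=3$.

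The main obstacle is the boundary bookkeeping for the \ref{en: type 2 edges} and \ref{en: type 3 edges} edges near the cut of each level. There I would use that only $O(1)$ vertices per level are affected and that their subtrees have geometrically decreasing sizes, so the total error is $O(n)$.
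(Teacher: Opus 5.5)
Your proposal is correct, but it takes a different route from the paper. The paper computes the number $e_{h,d}(\ell)$ of edges added by a vertex at level $\ell$ of the full tree $T^*_{h,d}$ and bounds it by $e_{h,d}(\ell)\le\bigl(d+\frac{\mathbf{1}_{d=3}}{6}\bigr)|T^*_{h-\ell,d}|$. It sums this over full trees, and then decomposes $U_{n,d}$ via $n=1+\sum_{1\le\ell\le\ell^*}(\alpha_\ell|T^*_{h-\ell,d}|+1)$ into $\alpha_\ell$ full copies of $T^*_{h-\ell,d}$ plus one root per level. Every edge of $U_{n,d}$ is added by one of its endpoints, so overcounting is harmless there. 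The paper therefore only proves the upper bound $\frac{6d+\mathbf{1}_{d=3}}{6\ln d}n\ln n+O(n)$, which is all Theorem~\ref{thm:main} needs.

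You instead charge each edge to its higher endpoint and reduce all three edge types to the ancestor--descendant count $\sum_{w\in U}\mathrm{depth}(w)=nh-O(n)$. You control the cut and wrap-around errors level by level through a geometric sum. The price is the disjointness check and the boundary bookkeeping. The gain is a matching lower bound, so the lemma holds as a genuine asymptotic equality: $\frac{19}{6\ln3}$ (resp.\ $\frac{d}{\ln d}$) is the true edge density of $U_{n,d}$, and the analysis of this construction cannot be sharpened.

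One bookkeeping detail needs fixing. In the Type~3 count, reindexing by $y=v+1$ drops the term $y=1\cdots1$ on \emph{every} level $\ell$ not entirely contained in $U$, because then $3\cdots3\notin U$. So this wrap-around effect is not confined to levels with at most $d$ vertices. It loses at most $\frac12|V_{h-\ell-1,3}|$ per level, hence $O(n)$ in total, so your conclusion is unaffected.
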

The proof is given in Section~\ref{sec:size}. Note that the \ref{en: type 3 edges} edges explain the exceptional result for $d=3$. Observe that by combining this lemma together with Lemma~\ref{lem:univ} we directly obtain Theorem \ref{thm:main}. However, some remarks are in place.
First of all,
\[
    \frac{2}{\ln 2} = 2.88539 \dots
    \quad
    \text{and}
    \quad
    \frac{19}{6\ln 3} = 2.88242 \dots
\]
so that the result for $d=3$ is only \emph{very slightly} better than that for $d=2$. The proof, however, is more complex, since it leverages an additional idea: instead of splitting a tree by removing just one vertex, we split it into several parts, whose sizes we can control better, by removing two vertices. The ternary structure can accommodate such maneuvers, so that in the end we do get an improved result. In any case, we also decided to include the proof for the case when $d=2$, as it is illustrative and guides through the more involved arguments for $d=3$.
Finally, to close this section note that since $ \frac{19}{6\ln3} < \frac{d}{\ln d}$ for $d \neq 3$, it makes no sense to study universality properties of $U_{n,d}$ for $d \neq 3$, and we explicitly do not do that. 

\paragraph{Outline} The remainder of the paper is organized as follows. In Section~\ref{sec: binary trees} we show the case $d=2$ of Lemma~\ref{lem:embedding}, thus establishing universality of $U_{n,2}$. Then, we establish the case $d=3$ of Lemma~\ref{lem:embedding} in Section~\ref{sec:ternary trees}, showing that $U_{n,3}$ is universal. The proofs of the auxiliary Lemma~\ref{lem:sep-main} and other lemmas about separating vertices in trees are presented in Section~\ref{sec:other proofs}. Finally, in Section~\ref{sec: results treewidth}, we extend our results to graphs with treewidth $w$, proving Theorem~\ref{thm:main-treewidth} and presenting the blown-up universal graphs.

\section{Proof of Lemma \ref{lem:embedding} for $d=2$}\label{sec: binary trees} 
We prove Lemma \ref{lem:embedding} for $d=2$ by induction over $|A|\in\mathbb{N}$.
Note that $A$ is a complete graph for $h<2$. We thus assume that $h\ge 2$, so that we are in the setting of Figure~\ref{fig:admissible}.
For the induction step, let $F$ be a forest with $0<|F|<|A|$, let $V\subset V(A)$ be the first $|F|$ vertices in the eating order for $A$ and assume that the statement holds for all admissible graphs $A'$ and forests $F'$ with $|F'|<|A'|<|A|$.

First, consider the case that $A$ is of the type shown in Figure~\ref{fig:admissible-a}, so $A$ consists of a root $r_A$ and an admissible subgraph $A'$. Since $|F|<|A|$ and $r_A$ is last in the eating order of $A$, we obtain $V\subseteq A'$. Moreover, the eating order on $V$ is the same in $A$ and $A'$.
If $|F| < |A'|$, we apply the induction hypothesis to $F$ and $A'$, which proves the claim.
Otherwise $|F| = |A'|$. Let $v\in V(F)$ arbitrary. We embed $F \setminus v$ into $A'$ using the induction hypothesis and then place $v$ at $r_{A'}$. This yields a proper embedding, as $r_{A'}$ is connected to every vertex in $A'$ due to the \ref{en: type 1 edges} edges and $F$ is embedded at $V$.

\begin{figure}[b]
\begin{subfigure}[t]{0.5\textwidth} 
\centering
\begin{tikzpicture}[scale=0.85, every node/.style={transform shape}]
    \node[draw, circle, fill=black, inner sep=2pt, label=right:{$r_A$}] (root2) at (0,-0.2) {};

    \node[draw, circle, fill=black, inner sep=2pt, label={[label distance=1mm]180:$r_{T^*_{h-1,2}}$}] (child1) at (-1.75,-1) {};
    \node[draw, circle, fill=black, inner sep=2pt, label=right:{$r_{A'}$}] (child2) at (1.75,-1) {};

    \node[draw, circle, fill=black, inner sep=2pt, label={[label distance=0.8cm]-90:$N$}] (child11) at (-1,-1.5) {};
    \node[draw, circle, fill=black, inner sep=2pt, label={[label distance=0.8cm]-90:$N$}] (child13) at (-2.5,-1.5) {};

    \node[draw, circle, fill=black, inner sep=2pt, label={[label distance=0.45cm]-90:$X$}] (child21) at (1,-1.5) {};
    
    \draw (root2) -- (child1);
    \draw (root2) -- (child2);
    \draw (child1) -- (child11);
    \draw (child1) -- (child13);
    \draw (child2) -- (child21);

    \draw[dashed] (-1.9,-3.2) -- (2.7,-3.2) -- (2.7,-0.6) -- (-1.2,-1) -- cycle;
    \node[label=above:{$U$}] (label) at (2.2,-0.7) {};
    \draw[dashed] (child2) -- (child11);
        
    \draw (-1,-1.5) -- (-1.5,-3) -- (-0.5,-3) -- cycle; 
    \draw (-2.5,-1.5) -- (-3,-3) -- (-2,-3) -- cycle; 

    \draw (1,-1.5) -- (0.5,-3) decorate [decoration={zigzag,segment length=2mm}] {-- (1.5,-2.5)} -- cycle; 
       
\end{tikzpicture}
\caption{}\label{fig:IH1_a}
\end{subfigure}
\hfill
\begin{subfigure}[t]{0.5\textwidth} 
\centering
\begin{tikzpicture}[scale=0.85, every node/.style={transform shape}]
    \node[draw, circle, fill=black, inner sep=2pt, label=right:{$r_A$}] (root2) at (0,-0.2) {};

    \node[draw, circle, fill=black, inner sep=2pt, label={[label distance=1mm]180:$r_{T^*_{h-1,2}}$}] (child1) at (-1.75,-1) {};
    \node[draw, circle, fill=black, inner sep=2pt, label=right:{$r_{A'}$}] (child2) at (1.75,-1) {};

    \node[draw, circle, fill=black, inner sep=2pt, label={[label distance=0.8cm]-90:$N$}] (child11) at (-1,-1.5) {};
    \node[draw, circle, fill=black, inner sep=2pt, label={[label distance=0.8cm]-90:$N$}] (child13) at (-2.5,-1.5) {};

    \node[draw, circle, fill=black, inner sep=2pt, label={[label distance=0.8cm]-90:$N$}] (child21) at (1,-1.5) {};
    \node[draw, circle, fill=black, inner sep=2pt, label={[label distance=0.45cm]-90:$X$}] (child22) at (2.5,-1.5) {};
    
    \draw (root2) -- (child1);
    \draw (root2) -- (child2);
    \draw (child1) -- (child11);
    \draw (child1) -- (child13);
    \draw (child2) -- (child21);
    \draw (child2) -- (child22);

    \draw[dashed] (0.2,-3.2) -- (3.3,-3.2) -- (3.3,-1.5) -- (2.1,-0.5) -- (1.35,-0.5) -- (0.2,-1.5) -- cycle;
    \node[label=above:{$U$}] (label) at (2.4,-0.7) {};
        
    \draw (-1,-1.5) -- (-1.5,-3) -- (-0.5,-3) -- cycle; 
    \draw (-2.5,-1.5) -- (-3,-3) -- (-2,-3) -- cycle; 

    \draw (1,-1.5) -- (0.5,-3) -- (1.5,-3) -- cycle; 
    \draw (2.5,-1.5) -- (2.0,-3) decorate [decoration={zigzag,segment length=2mm}] {-- (3,-2.5)} -- cycle; 
\end{tikzpicture}
\caption{}\label{fig:IH1_b}
\end{subfigure}
\caption{
This figure illustrates the two possibilities of an admissible graph $A$ of type as in Figure~\ref{fig:admissible-b} and $|A'|\geq 2$. Let $U$ be the induced subgraph containing the first $N+X+1$ vertices in the eating order of $A$. If $r_{A'}$ only has one child, $U$ is indicated by the dashed region in (a), otherwise in (b). By Observation~\ref{obs:2}, $U$ is admissible in both cases. 
}
\label{fig:IH1}
\end{figure}


Since $d=2$ we are left with the case that $A$ is as in Figure~\ref{fig:admissible-b}, so that it consists of a root $r_A$, one subgraph $T^*_{h-1,2}$ and another admissible graph $A'$, where $A'$ is first in the eating order.
If $|A'|=1$, we place any $v\in V(F)$ at $r_{A'}$, which is first in the eating order. By Observation \ref{obs:1}, the graph $A\setminus r_{A'}$ is again admissible.
Thus, we use the induction hypothesis to embed $F\setminus v$ into $A\setminus r_{A'}$ such that $F\setminus v$ is placed at the first $|F\setminus v|$ vertices in the eating order of $A\setminus r_{A'}$. Therefore, $F$ is embedded exactly at the vertices $V$. Moreover, the \ref{en: type 2 edges} edges ensure that $r_{A'}$ is connected to every vertex in $T^*_{h-1,2}$, thereby defining an embedding.

In the rest we assume that we are in the situation of Figure~\ref{fig:admissible-b} and $|A'| \ge 2$.  Let $X:=|A''|$, where~$A''$ is the subgraph rooted at the lexicographically largest vertex among the children of $r_{A'}$ in~$A$, see also Figure~\ref{fig:IH1}. Moreover, set $N := |T^*_{h-2,2}|>0$.
If $|F| \leq N+X+1$, we consider the induced subgraph $U$ on $r_{A'}$, the vertices in $A''$, and on $u=r_{A''}-1$ with its descendants $D_u$. 
If $r_{A'}$ only has one child, $U$ is shown in Figure \ref{fig:IH1_a}, else $r_{A'}$ has two children depicted in Figure \ref{fig:IH1_b}.
In both cases,~$U$ contains the vertices in~$V$ and is admissible by Observation~\ref{obs:2}.
If the root $r_U=r_{A'}$ is not in $V$, we embed $F$ into~$U$ using the induction hypothesis. Thus, $F$ is embedded exactly at~$V$, proving the claim. Otherwise, if~$r_U$ is in~$V$, let $v$ be an arbitrary vertex of $F$. We use the induction hypothesis to embed $F\setminus v$ into $U$. This embeds $F\setminus v$ at $V\setminus r_U$. Finally, we place $v$ at $r_U$, such that $F$ eats $V$. The~\ref{en: type 1 edges} and~\ref{en: type 2 edges} edges ensure that $r_U$ is connected to every vertex in $U$, proving the claim also in that case. 

It remains to treat the case where we are in the situation of Figure~\ref{fig:admissible-b}, $|A'| \ge 2$, and $|F| > N+X+1$. First, assume that $|F|<3N+X+2$.
Let
\begin{equation}
\label{eq:mM}
    m:=\max\{0,|F|-2N-X-1\}
    \quad
    \text{and}
    \quad
    M:=|F|-N-X-1.
\end{equation}
Note that $2m\leq M$. Applying Lemma \ref{lem:sep-main} to $F$ gives us a separating vertex $s$ and a partition of~$F\setminus s$ in three forests such that
\begin{align}\label{partition}
m \leq |F_3| \leq M,\quad|F_1| \leq |F| -1 -M,\quad\text{and}\quad|F_2| \leq |F_1|.
\end{align}
As we will argue, these conditions ensure that $F_1$, $F_2$, $F_3$ have appropriate sizes so that they can be embedded one after another into $A$ by using the induction hypothesis.
First, since $|F_1|\leq N+X$, we can embed $F_1$ as in the previous case using $U$, depicted in Figure \ref{fig:IH1}, leaving an admissible graph $A^{(1)}$ with $r_A = r_{A^{(1)}}$. Moreover, this embedding is such that $r_{A'}$ is not used, since $|F_1| < |U|$ and the root is eaten last.
By \eqref{partition}, we obtain $|F_1|\geq X$, since
\[
2|F_1|\geq |F_1|+|F_2| = |F|-1-|F_3|\geq N+X \geq 2X.
\]

\begin{figure}[b]
\begin{subfigure}[t]{0.3\textwidth} 
\centering
\begin{tikzpicture}[scale=0.85, every node/.style={transform shape}]
    \node[] at (0,1) {};
    \node[draw, circle, fill=black, inner sep=2pt, label=right:{$r_{A^{(1)}}$}] (root2) at (0,-0.2) {};

    \node[draw, circle, fill=black, inner sep=2pt, label={[label distance=1mm]180:$r_{T^*_{h-1,2}}$}] (child1) at (-1.6,-1) {};
    \node[draw, circle, fill=black, inner sep=2pt, label=right:{$r_{A'}$}] (child2) at (1.6,-1) {};

    \node[draw, circle, fill=black, inner sep=2pt, label={[label distance=0.8cm]-90:$N$}] (child13) at (-2.35,-1.5) {};

    \draw (root2) -- (child1);
    \draw (root2) -- (child2);
    \draw (child1) -- (child13);

    \draw[dashed] (1.1,-3.2) -- (-3.2,-3.2) -- (-3.2,-0.8) -- (-1,0.2) -- (1.1,0.2) --  cycle;
    \node[label=above:{$U^{(1)}$}] (label) at (-1.7,-0.1) {};

    \draw (-2.35,-1.5) -- (-2.85,-3) -- (-1.85,-3) -- cycle; 

\end{tikzpicture}
\caption{}\label{fig:IH2_a}
\end{subfigure}
\hfill
\begin{subfigure}[t]{0.3\textwidth} 
\centering
\begin{tikzpicture}[scale=0.85, every node/.style={transform shape}]
    \node[draw, circle, fill=black, inner sep=2pt, label=right:{$r_{A^{(1)}}$}] (root2) at (0,-0.2) {};

    \node[draw, circle, fill=black, inner sep=2pt, label={[label distance=1mm]180:$r_{T^*_{h-1,2}}$}] (child1) at (-1.6,-1) {};
    \node[draw, circle, fill=black, inner sep=2pt, label=right:{$r_{A'}$}] (child2) at (1.6,-1) {};

    \node[draw, circle, fill=black, inner sep=2pt, label={[label distance=0.45cm]-90:$Y$}] (child11) at (-0.85,-1.5) {};
    \node[draw, circle, fill=black, inner sep=2pt, label={[label distance=0.8cm]-90:$N$}] (child13) at (-2.35,-1.5) {};

    \draw (root2) -- (child1);
    \draw (root2) -- (child2);
    \draw (child1) -- (child11);
    \draw (child1) -- (child13);

    \draw[dashed] (1.1,-3.2) -- (-3.2,-3.2) -- (-3.2,-0.8) -- (-1,0.2) -- (1.1,0.2) --  cycle;
    \node[label=above:{$U^{(1)}$}] (label) at (-1.7,-0.1) {};
        
    \draw (-2.35,-1.5) -- (-2.85,-3) -- (-1.85,-3) -- cycle; 

    \draw (-0.85,-1.5) -- (-1.35,-3) decorate [decoration={zigzag,segment length=2mm}] {-- (-0.35,-2.5)} -- cycle; 

\end{tikzpicture}
\caption{}\label{fig:IH2_b}
\end{subfigure}
\hfill
\begin{subfigure}[t]{0.3\textwidth} 
\centering
\begin{tikzpicture}[scale=0.85, every node/.style={transform shape}]
    \node[draw, circle, fill=black, inner sep=2pt, label=right:{$r_{A^{(1)}}$}] (root2) at (0,-0.2) {};

    \node[draw, circle, fill=black, inner sep=2pt, label={[label distance=1mm]180:$r_{T^*_{h-1,2}}$}] (child1) at (-1.6,-1) {};
    \node[draw, circle, fill=black, inner sep=2pt, label=right:{$r_{A'}$}] (child2) at (1.6,-1) {};

    \node[draw, circle, fill=black, inner sep=2pt, label={[label distance=0.8cm]-90:$N$}] (child11) at (-0.85,-1.5) {};
    \node[draw, circle, fill=black, inner sep=2pt, label={[label distance=0.8cm]-90:$N$}] (child13) at (-2.35,-1.5) {};

    \node[draw, circle, fill=black, inner sep=2pt, label={[label distance=0.45cm]-90:$Y$}] (child21) at (0.85,-1.5) {};
    
    \draw (root2) -- (child1);
    \draw (root2) -- (child2);
    \draw (child1) -- (child11);
    \draw (child1) -- (child13);
    \draw (child2) -- (child21);

    \draw[dashed] (-1.75,-3.2) -- (2.5,-3.2) -- (2.5,-0.6) -- (-1.1,-1) -- cycle;
    \node[label=above:{$U^{(1)}$}] (label) at (2.1,-0.7) {};
    \draw[dashed] (child2) -- (child11);
        
    \draw (-0.85,-1.5) -- (-1.35,-3) -- (-0.35,-3) -- cycle; 
    \draw (-2.35,-1.5) -- (-2.85,-3) -- (-1.85,-3) -- cycle; 

    \draw (0.85,-1.5) -- (0.35,-3) decorate [decoration={zigzag,segment length=2mm}] {-- (1.35,-2.5)} -- cycle; 
     
\end{tikzpicture}
\caption{}\label{fig:IH2_c}
\end{subfigure}
\caption{
This figure illustrates the three possible shapes of $A^{(1)}$, the graph obtained after embedding a forest $F_1$ with $X\leq |F_1|\leq N+X$ and rest $Y=\protect\wavytriangle(F_1)$ into the admissible graph $A$ in Figure \ref{fig:IH1}. 
By Observations~\ref{obs:1} and~\ref{obs:2}, in all cases the depicted subgraph on $U^{(1)}$ is admissible.
}
\label{fig:IH2}
\end{figure}

\noindent 
Thus, there are exactly three possible cases for the shape of~$A^{(1)}$, shown in Figure \ref{fig:IH2}. 
If~$r_{A'}$ does not have a child in~$A^{(1)}$, i.e., as in ~Figure \ref{fig:IH2_a} or \ref{fig:IH2_b}, we place~$s$ at~$r_{A'}$. Thus, $F_1\cup\{s\}$ is embedded at the first $|F_1|+1$ vertices of $A$ and the remaining graph $U^{(1)}$ is admissible.
We apply the induction hypothesis to embed~$F_2 \cup F_3$ into $U^{(1)}$, so that~$F$ eats~$V$.
This yields a proper embedding, since $r_{A'}$ is connected to every vertex in $T^*_{h-1,2}$ thanks to the \ref{en: type 2 edges} edges.
Otherwise, in the setting of Figure~\ref{fig:IH2_c}, observe that \eqref{partition} implies $|F_1|+|F_2|\leq 2N+X$, and so
\[
|F_2|\leq 2N+X-|F_1| \leq N+\wavytriangle(F_1).
\]
Hence, we embed~$F_2$ similar to~$F_1$ using the admissible subgraph~$U^{(1)}$ in Figure~\ref{fig:IH2_c} and place~$s$ at~$r_{A'}$.
By \eqref{partition}, $|F_1|+|F_2|\geq N+X$, such that the embedding of~$F_1 \cup F_2 \cup \{s\}$ eats the first~$|F_1|+|F_2|+1$ vertices in~$A$ and the remaining graph~$A^{(2)}$ is again admissible. If~$F_3 =\emptyset$, this proves the claim, else we embed~$F_3$ into~$A^{(2)}$ by the induction hypothesis. 
Therefore,~$F$ eats exactly~$V$ again.

The last case to consider is when~$|F|=3N+X+2$. Here, we choose~$m=N$ -- opposed to~$N+1$ in~\eqref{eq:mM} -- so that~$M=2N+1\ge 2m$. This allows $|F_3|=N$, which creates the problem that $F_2$ might spread over three subgraphs rooted at level two,~i.e.~$|F_2|=N+\wavytriangle(F_1)+1$.
Therefore, we require~$|F_3|$ to be maximal in \eqref{partition}. This implies $|F_3|>N$, because otherwise
\[
m=|F_3| < |F_2|\leq |F_1|\leq N+X\leq M
\quad 
\text{and}
\quad 
|F_3|\leq |F_1|
\]
such that changing the roles of $F_2$ and $F_3$ contradicts the maximality of $|F_3|$.
Hence, $|F_3|>N$, which implies $|F_1|+|F_2|\leq 2N+X$ and consequently $|F_2|\leq N +\wavytriangle(F_1)$. With this at hand, the remainder follows exactly as above.

\section{Proof of Lemma \ref{lem:embedding} for $d=3$}
\label{sec:ternary trees}

In this section $d=3$ is fixed, hence we write $T_{h}^*$ for $T_{h,3}^*$.
In the proof, where we want to construct an embedding of $F$ into $A$, we proceed by induction over $|A|\in \mathbb{N}$.
As in the case $d=2$ in the previous section, we will separate $F$ into smaller parts and identify appropriate subgraphs of $A$, where we can use the induction hypothesis to embed them.
Crucially, for the hypothesis to apply, the subgraphs of $A$ that we will consider must be admissible, and this will necessarily impose size constraints on the parts of $F$.
In Section~\ref{sec: binary trees}, applying Lemma~\ref{lem:sep-main} directly allowed us to handle these cases by using one separating vertex. Here, one separating will generally be not sufficient due to the more involved $3$-ary base structure. In the following subsection we suitably adapt Lemma~\ref{lem:sep-main} so that it works in the present case as well, and we explain what this exactly means. The proof of Lemma~\ref{lem:embedding} is then given in Subsection~\ref{subsec:proof}.

\subsection{Splitting Forests}
\label{subsec:preliminaries}

In order to simplify the exposition, in this section we assume that $A$ is fixed and of type as shown in Figure~\ref{fig:admissible-b} or~\ref{fig:admissible-c} with $|A'|\geq 2$. (The other cases will turn out to be simple in the proof of Lemma \ref{lem:embedding}.) We denote by $N$ (resp.~$X$) the size of the largest (resp.~smallest) subgraph of $A$ rooted at level two of $A$.
Let $F$ be the graph that we want to embed into $A$.
The idea is to remove \emph{up to two} vertices from $F$ such that we can apply the induction hypothesis. In fact, for~$d=3$ and by Observation~\ref{obs:2}, the induction hypothesis applies in particular to subgraphs of $A$ not extending over more than \emph{three} -- as opposed to \emph{two} in Section \ref{sec: binary trees} -- subgraphs rooted at level two of $A$, see also Figure~\ref{fig:IH1 ternary}. To be precise, this means that if $|F|\leq 2N+X$, then we can embed it by using the induction hypothesis, and hence we call $2N+X$ the ``magic size'' with respect to $A$.
Our general aim in this section is to show that any $F$ whose size is larger than the magic size can be partitioned, by removing one or two vertices, into forests $F_1, F_2, \dots$ whose size is at most the magic size.
Note that this means that $|F_1| \le 2N + X$, but then, after embedding $F_1$, the new magic size is $2N + \wavytriangle(F_1)$, where $\wavytriangle(F_1)$ is the rest that remains from a full subgraph at level two after embedding $F_1$, see Definition~\ref{def:rest}.
So, we require that $|F_2| \le 2N + \wavytriangle(F_1)$, and then, iteratively $|F_3| \le 2N + \wavytriangle(F_1 \cup F_2)$, and so on.

The first case that we consider is when $|F|\leq 5N+X+2$, where the next lemma shows that one separating vertex is actually enough to (essentially) fulfill the magic size constraints.   
\begin{lemma}\label{lem:sep-main-cons1}
Let $X > 0$ and $N\geq X$. Let $F$ be a forest with $2N+X+2\leq |F|\leq 5N+X+2$. Then there exists a vertex $s \in F$ and a partition $F_1,F_2,F_3$ of $F\setminus s$ such that 
\[
|F_1| \leq 2N+X,
\quad
|F_2| \leq 2N + \wavytriangle(F_1),
\quad
|F_3| \leq 2N + \wavytriangle(F_1 \cup F_2)+1,
\quad
\text{and}
\quad
|F_1| +|F_2|\geq 2N+X.
\]    
\end{lemma}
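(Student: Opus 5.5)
The plan is to apply Lemma~\ref{lem:sep-main} once, with parameters chosen so that three of the four required inequalities follow by bookkeeping, exactly as in the case $d=2$. Write $n=|F|$ and set
\[
M:=n-2N-X-1, \qquad m:=\max\{0,\,n-4N-X-2\}.
\]
Since $n\ge 2N+X+2$ we have $M\ge 1$. The condition $2m\le M$ is trivial when $m=0$; otherwise it reduces to $n\le 6N+X+3$, which holds because $n\le 5N+X+2$. Lemma~\ref{lem:sep-main} then yields $s$ and a partition $F_1,F_2,F_3$ of $F\setminus s$ with
\[
m\le |F_3|\le M,\qquad |F_1|\le n-1-M=2N+X,\qquad |F_2|\le |F_1|.
\]
This gives the first bound directly. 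It also gives $|F_1|+|F_2|=n-1-|F_3|\ge n-1-M=2N+X$, which is the fourth bound. Consequently $2|F_1|\ge 2N+X\ge X$, so $|F_1|\ge X$ and the rest of $F_1$ is given by the second clause of Definition~\ref{def:rest}.

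Next I check the bound on $F_2$. Write $|F_1|=x+kN+X$ with $0\le x<N$; then $k\in\{0,1,2\}$, with $x=0$ if $k=2$, and $2N+\wavytriangle(F_1)=3N-x$. The two available upper bounds are $|F_2|\le |F_1|$ and $|F_2|=n-1-|F_3|-|F_1|\le 4N+X+1-|F_1|$, the latter using $|F_3|\ge m$ and $n-1-m\le 4N+X+1$.
\begin{itemize}
\item For $k=0$: $|F_2|\le x+X\le 3N-x$, since $x<N$ and $X\le N$.
\item For $k=2$: $|F_2|\le 2N+1\le 3N$.
\item For $k=1$: $|F_2|\le 3N+1-x$, which misses the target $3N-x$ by one.
\end{itemize}

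For the bound on $F_3$, write $|F_1\cup F_2|=y+kN+X$ with $0\le y<N$. Since $|F_1\cup F_2|\ge 2N+X$, we get $k\ge 2$. Then
\[
|F_3|=n-1-|F_1\cup F_2|\le (5-k)N+1-y\le 3N+1-y=2N+\wavytriangle(F_1\cup F_2)+1,
\]
which is exactly the allowed bound (this is why the extra $+1$ appears there).

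The main obstacle is therefore the off-by-one case $k=1$. Equality in the $F_2$ bound there forces $|F_3|=m=n-4N-X-2>0$ and $|F_1|+|F_2|=4N+X+1$. I would handle it as in the last case of Section~\ref{sec: binary trees}, by choosing among all partitions allowed by Lemma~\ref{lem:sep-main} one with $|F_3|$ maximal. In the bad case $|F_3|=m\le N$, and $|F_2|=3N+1-x>2N+1$ is strictly larger than $|F_3|$. So if $|F_2|\le M$, exchanging the roles of $F_2$ and $F_3$ gives a valid partition with larger third part, contradicting maximality. If instead $|F_2|>M$, I would try moving a component of $F_2$ (or one of $F_1$) into $F_3$, or slightly lowering $m$ as was done for $d=2$, and then recheck the inequalities. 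Verifying that one of these adjustments always works is the step I expect to need the most care.
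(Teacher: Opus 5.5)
Your checks of $|F_1|\le 2N+X$, of $|F_1|+|F_2|\ge 2N+X$, of the cases $k=0$ and $k=2$ for $F_2$, and of the bound on $F_3$ are all correct. The proof is not finished, though. In the case $k=1$ you only obtain $|F_2|\le 3N+1-x$, while $3N-x$ is required, and you leave this case open.

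The repairs you list do not close it. Suppose $|F_3|$ is maximal. In the bad case $M-|F_3|=2N+1$. Every component of $F\setminus s$ outside $F_3$ must then have more than $2N+1$ vertices, since otherwise adding it to $F_3$ would contradict maximality. So no component of $F_1$ or $F_2$ can be moved into $F_3$ for the same $s$. Lowering $m$ goes the wrong way, because it weakens your bound $|F_2|\le n-1-m-|F_1|$.

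The off-by-one comes purely from your choice of $m$, and you had already seen the slack: $2m\le M$ only needs $n\le 6N+X+3$. The paper takes $m=\max\{0,n-4N-X-1\}$, one larger than yours. Then $2m\le M$ amounts to $n\le 6N+X+1$, which still follows from $n\le 5N+X+2$ because $N\ge X\ge 1$. With this $m$ you get $n-1-|F_3|\le 4N+X$, hence $|F_2|\le 4N+X-|F_1|$. This equals exactly $3N-x$ when $k=1$ and gives $2N\le 3N$ when $k=2$. The rest of your argument goes through unchanged, and no maximality argument is needed.

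Your maximality route can also be rescued, but only by using a different separating vertex. In the bad case $|F_3|=c:=n-4N-X-2\le N$ and $M=c+2N+1$. Take $t=\lfloor M/2\rfloor$; then $t\ge c+1$ because $c\le 2N-1$. Lemma~\ref{lem:sep-chung} with this $t$ gives a sub-forest of size in $[t,2t]\subseteq[c+1,M]$, which contradicts the maximality of $|F_3|$.
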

The proof is in Section \ref{subsec:proof of sep-main-cons1}.
Note that the fourth conclusion in the lemma ensures that $F_1$ and $F_2$ cover the first three subgraphs rooted at level two in the eating order of $A$, i.e., the subgraph $U$ in Figure~\ref{fig:IH1 ternary}, and this will be very helpful when embedding $F$. This is the main difference to the other  case $|F|\geq 5N+X+3$, where we are in the setting of Figure \ref{fig:admissible-c} and where we will need to cut twice.
\begin{lemma}\label{lem:sep-main-cons2}
Let $X > 0$ and $N\geq X$. Let $F$ be a forest with $5N+X+3\leq|F|\leq 8N+X+3$.
Let $I_i\in\{0,1\}$ be $1$ if $|F|=8N+X+3$ and $i=6$.
Then there exists a vertex $s_1\in F$ and a partition $F_1,F_2,F_4,\overline F$
of $F\setminus s_1$, a vertex $s_2\in\overline F$ and a partition $F_3,F_5,F_6$ of $\overline F\setminus s_2$ such that 
\[
    |F_i| \leq 2N + \wavytriangle\left(\bigcup_{1\leq j <i} F_j \right) + I_i\quad\text{for }i\geq 1,
    \quad\text{and}\quad
    |F_1|+|F_2|\ge\frac32N+X.
\]
\end{lemma}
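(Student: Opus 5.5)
We would derive Lemma~\ref{lem:sep-main-cons2} from two successive applications of Lemma~\ref{lem:sep-main}, possibly combined with Lemma~\ref{lem:sep-main-cons1}. First we record how the rest behaves. For a forest $G$ with $|G|\ge X$, the quantity $\wavytriangle(G)$ is determined by $|G|-X \bmod N$. Consequently, whenever $|F_1|+\dots+|F_{i-1}| = X+kN+x$ with $0\le x<N$, the bound $|F_i|\le 2N+\wavytriangle(\cdot)$ says precisely that
\[
|F_1|+\dots+|F_i| \le X+(k+3)N .
\]
In words, the cumulative size may advance by at most three more level-two blocks beyond the block currently being filled. This converts all six conditions into conditions on the partial sums $S_i=\sum_{j\le i}|F_j|$. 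The requirement becomes that $S_i$ does not exceed the end of the third block after the block containing $S_{i-1}$, and in the extremal case $|F|=8N+X+3$ an additional slack of one is allowed for $i=6$. The conditions for $i=1,2$ reduce to $|F_1|\le 2N+X$ and $S_2\le X+(k_1+3)N$.

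The first cut handles the top level. Apply Lemma~\ref{lem:sep-main} to $F$ with
\[
M:=|F|-1-(2N+X), \qquad m:=\max\{0,\,|F|-1-(5N+X)\}
\]
(so $2m\le M$, which holds because $|F|\le 8N+X+3$), keeping the piece of size in $[m,M]$ as $\overline F$ together with $F_4$. The idea is this. The three forests produced by Lemma~\ref{lem:sep-main} are $F_1$, $F_2$ and a large piece $F'$. We have $|F_1|\le |F|-1-M = 2N+X$ and $|F_2|\le|F_1|$, which gives the conditions for $i=1,2$ at once. Moreover $|F_1|+|F_2|\ge |F|-1-M\ge \dots$, and we would tune $M$ so that $|F_1|+|F_2|\ge\frac32N+X$ follows from $2|F_1|\ge|F_1|+|F_2|$ and the lower bound on $|F'|$. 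Since the required lower bound $\frac32 N+X$ is not the full $2N+X$, we have some freedom, and this is exactly why the first cut has to be supplemented by a second.

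The second cut then splits the large piece $F'$ of size between roughly $3N$ and $6N$. If $F'$ is still too big to be handled as a single $F_4$, we apply Lemma~\ref{lem:sep-main} once more to $F'$ itself. This is done with $m,M$ chosen relative to the rest $\wavytriangle(F_1\cup F_2)$, giving a vertex $s_2$ and pieces $F_3,F_5,F_6$. One of the components of $F'\setminus s_2$ that does not contain the neighbour of $s_1$ is relabelled as $F_4$ and the rest is grouped into $\overline F$; concretely, we would instead route $F_4$ as a tree of $F\setminus s_1$ lying outside $\overline F$. The ordering $F_3$ before $F_4$ before $F_5,F_6$ matters because $s_2$ will be placed at a vertex adjacent to the blocks receiving $F_3,F_5,F_6$. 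Once the sizes are fixed, each inequality $S_i\le X+(k_{i-1}+3)N$ is checked as a case analysis on which block $S_{i-1}$ falls in, using $|F_{i}|\le|F_{i-1}|$-type orderings from Lemma~\ref{lem:sep-main}.

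The main obstacle is the bookkeeping at block boundaries: a piece that starts just after a block boundary may consume almost four blocks. This is exactly why the paper had to take $|F_3|$ maximal in the $d=2$ case at $|F|=3N+X+2$. We would handle the analogous extremal case $|F|=8N+X+3$ by the same trick, choosing the split maximising the last piece and exchanging roles if needed, and allowing the slack $I_6$. Away from the extremes, we expect the inequalities to follow from the size windows $[m,M]$ with some room to spare.
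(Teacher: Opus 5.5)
Your proposal has a genuine gap at the first cut. You set $F_2:=H_2$, the second piece from Lemma~\ref{lem:sep-main}, and claim that $|F_1|\le 2N+X$ and $|F_2|\le|F_1|$ settle the conditions for $i=1,2$. They do not. For $N+X\le|F_1|<2N+X$ one has $\wavytriangle(F_1)=2N+X-|F_1|$, so the condition for $i=2$ reads $|F_1|+|F_2|\le 4N+X$. This fails, e.g., when $|F_2|=|F_1|=2N+X-1$ and $X\ge 3$.

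No second cut can repair this. Your first cut always gives $|F_1|+|F_2|\ge |F|-1-M=2N+X$. Now take the tree from the Remark in Section~\ref{subsec:proof}: three rooted trees with $\frac{5N+1}{2}$ vertices each, joined to a new vertex $v$, where $X=N\ge 3$ is odd. For this tree, every choice of $s_1,F_1,F_2$ meeting the constraints for $i=1,2$ has $|F_1|+|F_2|<2N+X$:
\begin{itemize}
\item If $s_1=v$: after one branch the rest is $\frac{N-1}{2}$, and a second branch exceeds $2N+\frac{N-1}{2}$. So $F_1\cup F_2$ contains at most one branch.
\item If $s_1\neq v$: the component through $v$ has $5N+2>3N$ vertices, so $F_1\cup F_2$ lies inside a single branch.
\end{itemize}
So the weaker bound $\tfrac32N+X$ is not slack to be tuned via $M$; it is forced. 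It arises in the case $|F_1|+|H_2|>4N+X$. There the paper sets $F_2=\emptyset$ and gets $|F_1|>2N+\frac X2\ge\tfrac32N+X$ from $|H_2|\le|F_1|$ and $N\ge X$.

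The missing idea is where $H_2$ goes in that case.
\begin{itemize}
\item If $|F_1|=2N+X$, or $|H_2\cup H_3|<5N+\wavytriangle(F_1)+2$: the paper puts $F_4=\emptyset$ and $\overline F=H_2\cup H_3$. It then applies Lemma~\ref{lem:sep-main-cons1} to $\overline F$, with $N$ (respectively $\wavytriangle(F_1)$) in place of $X$.
\item Otherwise: it takes $F_4:=H_2$ and $\overline F:=H_3$. It applies Lemma~\ref{lem:sep-main} a second time, to $H_3$, with $m_2=\wavytriangle(F_1)$ and $M_2=\wavytriangle(F_1\cup F_4)$. This is legitimate: $|F_4|\le|F_1|=2N+X-\wavytriangle(F_1)$ together with $4N+X<|F_1|+|F_4|<5N+X$ gives $M_2\ge 2m_2$.
\end{itemize}
In the second case, the window piece $F_3$ closes the block left open by $F_1$, i.e.\ $|F_1|+|F_3|\ge 2N+X$. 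At the same time it keeps $|F_1|+|F_3|+|F_4|\le 5N+X$. Together these are exactly what let $H_2$ satisfy the bound for $i=4$, and this is why $F_3$ precedes $F_4$. The pieces $F_5,F_6$ are then controlled by $|F_5|\le|F_6|$ and $|F_3|+|F_5|\ge M_2$.

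Your version, with $F_4$ a component of $F'\setminus s_2$, is not a valid partition: that component is joined to $s_2\in\overline F$, whereas the parts must be unions of components of $F\setminus s_1$. Your fallback of routing $F_4$ elsewhere is left unspecified.

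Two smaller points:
\begin{itemize}
\item The extremal case needs no maximality or role-exchange trick. The slack $I_6$ falls out of the count $|F_1|+\dots+|F_5|\ge 6N+X-\wavytriangle(F_1\cup\dots\cup F_5)$.
\item Your choice $m=|F|-1-(5N+X)$ violates $2m\le M$ for $|F|\in\{8N+X+2,8N+X+3\}$. Take $m=|F|-(5N+X+2)$, as the paper does.
\end{itemize}
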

The proof is in Section~\ref{subsec:proof of sep-main-cons2}.
By the first condition, all forests satisfy the magic size constraint (except for $F_6$, which won't be a problem). Additionally, the second condition ensures that $F_1$ and $F_2$ eat at least $\frac32N+X$ vertices that will impose additional restrictions in the proof of Lemma \ref{lem:embedding}. In particular, as we shall see, this is the ultimate reason why we have -- in contrast to the case $d=2$ -- to include the \ref{en: type 3 edges} edges in our construction.

\subsection{Main Proof of Lemma \ref{lem:embedding}}\label{subsec:proof}
The proof follows by induction over $|A|\in \mathbb{N}$.
For~$h<2$, every admissible graph is complete. Hence, let~$h\geq 2$ and assume that the statement holds for all admissible graphs~$A'$ and forests~$F'$ with~$|F'|<|A'|<|A|$. Let~$F$ be a forest with~$0<|F|<|A|$, let~$V\subset V(A)$ be the first~$|F|$ vertices in the eating order of~$A$. We distinguish three cases according to the shape of~$A$ given by Figure~\ref{fig:admissible}.

First, assume that~$A$ is of type shown in Figure~\ref{fig:admissible-a}, consisting of a root~$r_A$ and an admissible subgraph~$A'$. If~$|F|<|A'|$, we directly apply the induction hypothesis to embed $F$ into $A'$. Since $|F|<|A|$, the remaining case is $|F|=|A'|$. We choose an arbitrary vertex $v\in F$, place it at $r_{A'}$ and embed $F\setminus v$ into $A'$ using the induction hypothesis. Due to the \ref{en: type 1 edges} edges, $r_{A'}$ is connected to every vertex in $A'$, which proves the claim.

Next, assume that~$A$ has shape as in Figure~\ref{fig:admissible-b}, so~$A$ consists of its root~$r_A$, one copy~$T_{h-1}^2$ of~$T_{h-1}^*$ and another admissible subgraph~$A'$, where $A'$ is first in the eating order.
If $|A'|=1$, we place any vertex $v\in F$ at $r_{A'}$, which is first in the eating order and connected to every vertex in~$T_{h-1}^2$ thanks to the~\ref{en: type 2 edges} edges.
By Observation \ref{obs:1}, the graph $A\setminus r_{A'}$ is admissible and we apply the induction hypothesis to $A\setminus r_{A'}$ and $F\setminus v$. Thus, $F$ is embedded at $V$, proving the claim.

\begin{figure}[b!]
\centering
\begin{subfigure}[t]{0.45\textwidth}
\centering
\begin{tikzpicture}[scale=0.85, every node/.style={transform shape}]
    \node[draw, circle, fill=black, inner sep=2pt, label={[label distance=-1mm]-280:{$r_A$}}] (root2) at (0,-0.2) {};

    \node[draw, circle, fill=black, inner sep=2pt, label={[label distance=1mm]180:$r_{T^3_{h-1}}$}] (child1) at (-2.5,-1) {};
    \node[draw, circle, fill=black, inner sep=2pt, label={[label distance=-3mm]-280:$r_{T^2_{h-1}}$}] (child2) at (0,-1) {};
    \node[draw, circle, fill=black, inner sep=2pt, label=right:{$r_{A'}$}] (child3) at (3.3,-1) {};

    \node[draw, circle, fill=black, inner sep=2pt, label={[label distance=0.8cm]-90:$N$}] (child21) at (-1.1,-1.5) {};
    \node[draw, circle, fill=black, inner sep=2pt, label={[label distance=0.8cm]-90:$N$}] (child22) at (0,-1.5) {};
    \node[draw, circle, fill=black, inner sep=2pt, label={[label distance=0.8cm]-90:$N$}] (child23) at (1.1,-1.5) {};

    \node[label={[label distance=0.8cm]-90:$|T^*_{h-1}|$}] (label1) at (-2.5,-1.5) {};

    \node[draw, circle, fill=black, inner sep=2pt, label={[label distance=0.45cm]-90:$X$}] (child31) at (2.2,-1.5) {};
    
    \draw (root2) -- (child1);
    \draw (root2) -- (child2);
    \draw (root2) -- (child3);

    \draw (child2) -- (child21);
    \draw (child2) -- (child22);
    \draw (child2) -- (child23);

    \draw (child3) -- (child31);

    \draw[dashed] (-0.55,-3.2) -- (4.2,-3.2) -- (4.2,-0.65) -- (-0.2,-1.25) -- cycle;
    \node[label=above:{$U$}] (label) at (4.1,-0.75) {};
    \draw[dotted] (-1.6,-3.4) -- (4.5,-3.4) -- (4.5,0.3) -- (-1.6,0.3) -- cycle;
    \draw[dashed] (child3) -- (child22);
    \draw[dashed] (child3) -- (child23);
        
    \draw (-2.5,-1) -- (-3.2,-3) -- (-1.8,-3) -- cycle; 

    \draw (-1.1,-1.5) -- (-1.5,-3) -- (-0.7,-3) -- cycle; 
    \draw (0,-1.5) -- (-0.4,-3) -- (0.4,-3) -- cycle; 
    \draw (1.1,-1.5) -- (0.7,-3) -- (1.5,-3) -- cycle; 

    \draw (2.2,-1.5) -- (2.6,-2.5) decorate [decoration={zigzag,segment length=2mm}] {-- (1.75,-3)} -- cycle; 
    
\end{tikzpicture}
\caption{}\label{fig:IH1_a ternary}
\end{subfigure}
\hfill
\begin{subfigure}[t]{0.45\textwidth} 
\centering
\begin{tikzpicture}[scale=0.85, every node/.style={transform shape}]
    \node[draw, circle, fill=black, inner sep=2pt, label={[label distance=-1mm]-280:{$r_A$}}] (root2) at (0,-0.2) {};

    \node[draw, circle, fill=black, inner sep=2pt, label={[label distance=1mm]180:$r_{T^3_{h-1}}$}] (child1) at (-2.5,-1) {};
    \node[draw, circle, fill=black, inner sep=2pt, label={[label distance=-3mm]-280:$r_{T^2_{h-1}}$}] (child2) at (0,-1) {};
    \node[draw, circle, fill=black, inner sep=2pt, label=right:{$r_{A'}$}] (child3) at (3.3,-1) {};

    \node[draw, circle, fill=black, inner sep=2pt, label={[label distance=0.8cm]-90:$N$}] (child21) at (-1.1,-1.5) {};
    \node[draw, circle, fill=black, inner sep=2pt, label={[label distance=0.8cm]-90:$N$}] (child22) at (0,-1.5) {};
    \node[draw, circle, fill=black, inner sep=2pt, label={[label distance=0.8cm]-90:$N$}] (child23) at (1.1,-1.5) {};

    \node[label={[label distance=0.8cm]-90:$|T^*_{h-1}|$}] (label1) at (-2.5,-1.5) {};

    \node[draw, circle, fill=black, inner sep=2pt, label={[label distance=0.8cm]-90:$N$}] (child31) at (2.2,-1.5) {};
    \node[draw, circle, fill=black, inner sep=2pt, label={[label distance=0.45cm]-90:$X$}] (child32) at (3.3,-1.5) {};
    
    \draw (root2) -- (child1);
    \draw (root2) -- (child2);
    \draw (root2) -- (child3);

    \draw (child2) -- (child21);
    \draw (child2) -- (child22);
    \draw (child2) -- (child23);

    \draw (child3) -- (child31);
    \draw (child3) -- (child32);

    \draw[dashed] (0.5,-3.2) -- (4.2,-3.2) -- (4.2,-0.6) -- (0.85,-1.25) -- cycle;
    \node[label=above:{$U$}] (label) at (4.1,-0.7) {};
    \draw[dotted] (-1.6,-3.4) -- (4.5,-3.4) -- (4.5,0.3) -- (-1.6,0.3) -- cycle;
    \draw[dashed] (child3) -- (child23);
        
    \draw (-2.5,-1) -- (-3.2,-3) -- (-1.8,-3) -- cycle; 

    \draw (-1.1,-1.5) -- (-1.5,-3) -- (-0.7,-3) -- cycle; 
    \draw (0,-1.5) -- (-0.4,-3) -- (0.4,-3) -- cycle; 
    \draw (1.1,-1.5) -- (0.7,-3) -- (1.5,-3) -- cycle; 

    \draw (2.2,-1.5) -- (1.8,-3) -- (2.6,-3) -- cycle; 
    \draw (3.3,-1.5) -- (3.7,-2.5) decorate [decoration={zigzag,segment length=2mm}] {-- (2.85,-3)} -- cycle; 
    
\end{tikzpicture}
\caption{}\label{fig:IH1_b ternary}
\end{subfigure}
\begin{subfigure}[t]{0.5\textwidth} 
\centering
\begin{tikzpicture}[scale=0.85, every node/.style={transform shape}]
    \node[draw, circle, fill=black, inner sep=2pt, label={[label distance=-1mm]-280:{$r_A$}}] (root2) at (0,-0.2) {};

    \node[draw, circle, fill=black, inner sep=2pt, label={[label distance=1mm]180:$r_{T^3_{h-1}}$}] (child1) at (-2,-1) {};
    \node[draw, circle, fill=black, inner sep=2pt, label={[label distance=-3mm]-280:$r_{T^2_{h-1}}$}] (child2) at (0,-1) {};
    \node[draw, circle, fill=black, inner sep=2pt, label=right:{$r_{A'}$}] (child3) at (2.5,-1) {};

    \node[label={[label distance=0.8cm]-90:$|T^*_{h-1}|$}] (label1) at (-0,-1.5) {};

    \node[label={[label distance=0.8cm]-90:$|T^*_{h-1}|$}] (label1) at (-2,-1.5) {};

    \node[draw, circle, fill=black, inner sep=2pt, label={[label distance=0.8cm]-90:$N$}] (child31) at (1.4,-1.5) {};
    \node[draw, circle, fill=black, inner sep=2pt, label={[label distance=0.8cm]-90:$N$}] (child32) at (2.5,-1.5) {};
    \node[draw, circle, fill=black, inner sep=2pt, label={[label distance=0.45cm]-90:$X$}] (child33) at (3.6,-1.5) {};
    
    \draw (root2) -- (child1);
    \draw (root2) -- (child2);
    \draw (root2) -- (child3);

    \draw (child3) -- (child31);
    \draw (child3) -- (child32);
    \draw (child3) -- (child33);

    \draw[dashed] (0.85,-3.2) -- (4.3,-3.2) -- (4.3,-1.5) -- (3.05,-0.5) -- (1.95,-0.5) -- (0.85,-1.5) -- cycle;
    \node[label=above:{$U$}] (label) at (3.4,-0.8) {};
    \draw[dotted] (-1,-3.4) -- (4.6,-3.4) -- (4.6,0.3) -- (-1,0.3) -- cycle;
        
    \draw (-2,-1) -- (-2.7,-3) -- (-1.3,-3) -- cycle; 

    \draw (0,-1) -- (-0.7,-3) -- (0.7,-3) -- cycle;

    \draw (1.4,-1.5) -- (1,-3) -- (1.8,-3) -- cycle; 
    \draw (2.5,-1.5) -- (2.1,-3) -- (2.9,-3) -- cycle; 
    \draw (3.6,-1.5) -- (4,-2.5) decorate [decoration={zigzag,segment length=2mm}] {-- (3.15,-3)} -- cycle; 

\end{tikzpicture}
\caption{}\label{fig:IH1_3 ternary}
\end{subfigure}
\caption{
This figure illustrates the three possibilities of an admissible graph $A$ with $|A'|\geq 2$ of type as in Figure \ref{fig:admissible-c} (and as in Figure \ref{fig:admissible-b}, if we consider only the dotted regions). Let $U$ be the induced subgraph on the first $2N+X+1$ vertices in the eating order, indicated by the dashed region and distinguished according to the number of children of $r_{A'}$. By Observation \ref{obs:2}, $U$ is admissible in all cases. Therefore, we are able to use the induction hypothesis to embed any forest $F_1$ with $|F_1|\leq 2N+X$ into~$U$.
}
\label{fig:IH1 ternary}
\end{figure}

Otherwise, we consider $A$ as in Figure~\ref{fig:admissible-b} with $|A'|>1$. Set $N := |T^*_{h-2}|$ and $X:=|A''|$, where~$A''$ is the subgraph rooted at the lexicographically largest child of $r_{A'}$. 
We first look at the case~$|F| \leq 2N+X+1$. Consider the induced subgraph~$U$ on~$r_{A'}$, the vertices in~$A''$, and on~$u_1=r_{A''}-1$,~$u_2=r_{A''}-2$ with their descendants~$D_{u_1}, D_{u_2}$. The graph~$U$ is depicted in Figure \ref{fig:IH1 ternary} restricted to the dotted region and divided according to the number of children of~$r_{A'}$.
By Observation~\ref{obs:2},~$U$ is admissible and note that~$V\subseteq V(U)$. If the root~$r_U=r_{A'}$ is not in~$V$, we embed~$F$ into~$U$ using the induction hypothesis.
Otherwise, if~$r_U$ is in~$V$, we place an arbitrary vertex~$v$ of~$F$ at~$r_U$ and use the induction hypothesis to embed~$F\setminus v$ into~$U$.
Note that~$r_U$ is connected to every vertex in~$U$ by the~\ref{en: type 1 edges} and~\ref{en: type 2 edges} edges. In any case, we exactly eat~$V$, which proves the claim.

In the setting of Figure~\ref{fig:admissible-b}, the remaining case is when~$|A'|>1$ and~$|F|> 2N+X+1$.
Since~$A$ is as in Figure~\ref{fig:admissible-b}, we have~$|F| \leq 5N+X+2$.
By applying Lemma \ref{lem:sep-main-cons1}, we obtain a vertex~$s \in F$ and a partition~$F_1,F_2,F_3$ of~$F\setminus s$ such that,
\begin{equation}\label{partition2}
|F_1| \leq 2N+X,
\quad
|F_2| \leq 2N + \wavytriangle(F_1),
\quad
|F_3| \leq 2N + \wavytriangle(F_1 \cup F_2)+1
\quad
\text{and}
\quad
|F_1| +|F_2|\geq 2N+X.
\end{equation} 
The property that $|F_1|\leq 2N+X$ allows us to embed $F_1$ as in the previous case, using the same admissible subgraph $U$ shown in Figure~\ref{fig:IH1 ternary}. Let $A^{(1)}$ be the resulting graph after the embedding of $F_1$. If $r_{A'}$ does not have a child in $A^{(1)}$, we directly place $s$ at $r_{A'}$, so that $F_1 \cup \{s\}$ eats the first $|F_1|+1$ vertices of $A$.
Thus, by Observation~\ref{obs:1}, the remaining graph is admissible and we embed $F_2\cup F_3$ 
using the induction hypothesis. This proves the claim in that case, since the \ref{en: type 2 edges} edges connect $r_{A'}$ to every vertex in $T_{h-1}^2$. 
Otherwise, $r_{A'}$ does have a child in $A^{(1)}$ and let $u$ be the lexicographically largest one. In that case,~$A^{(1)}$ has shape as in Figure~\ref{fig:IH1 ternary} restricted to the dotted region with $X=\wavytriangle(F_1)$. 
We consider the induced subgraph $U^{(1)}$ in $A^{(1)}$ on $r_{A'}$ and on $u$, $u-1$, $u-2$ with their descendants $D_u$, $D_{u-1}$, $D_{u-2}$. By Observation~\ref{obs:2}, the graph $U^{(1)}$ is admissible and note that $|U^{(1)}|= 2N+\wavytriangle(F_1)+1$. We recall $|F_2|\leq 2N+\wavytriangle(F_1)$ given by \eqref{partition2}. Thus, we embed $F_2$ into $U^{(1)}$ using the induction hypothesis and denote the resulting graph by $A^{(2)}$.
Since $|F_1|+|F_2|\geq 2N+X$, the vertex $r_{A'}$ has no children in~$A^{(2)}$. Hence, we place $s$ at $r_{A'}$ such that $F_1\cup F_2\cup\{s\}$ is embedded at the first $|F_1|+|F_2|+1$ vertices in the eating order of $A$.
If $F_3$ is empty this proves the claim. Otherwise, we embed $F_3$ into $A^{(2)}\setminus r_{A'}$ using the induction hypothesis. The claim follows, as $r_{A'}$ is connected to every vertex in $T_{h-1}^2$ by the \ref{en: type 2 edges} edges; this completes the proof in the case that $A$ is as in Figure~\ref{fig:admissible-b}.

In what follows we assume that $A$ is as in Figure \ref{fig:admissible-c}, i.e., that it consists of the root $r_A$, an admissible subgraph $A'$ (first in the eating order) and two copies $T_{h-1}^2$ and $T_{h-1}^3$ of $T^*_{h-1}$ (second and third in the eating order).
If $|A'|=1$, we place an arbitrary vertex $v\in F$ at $r_{A'}$ and embed $F\setminus v$ into $A\setminus r_{A'}$ using the induction hypothesis. This proves the claim, since $r_{A'}$ is connected to every vertex in $T_{h-1}^2$ and $T_{h-1}^3$, due to the \ref{en: type 2 edges} edges.
Otherwise, as before, set $N := |T^*_{h-2}|$ and $X:=|A''|$, where $A''$ is the subgraph rooted at the lexicographically largest vertex among the children of $r_{A'}$.
If $|F|\leq 5N+X+2$ then we proceed exactly as in the case of Figure~\ref{fig:admissible-b};
the only difference is that $r_A$ has three instead of two children, which is depicted in Figure \ref{fig:IH1 ternary}, but does not alter any of the previously discussed steps.
So, we focus on the case where $A$ is as in Figure~\ref{fig:admissible-c} and $|F| >5N+X+2$. 
The three possible shapes of $A$ are depicted in Figure~\ref{fig:IH1 ternary}.
The size of $A$ ensures that $|F|\leq 8N+X+3$. Thus, by applying Lemma \ref{lem:sep-main-cons2}, we obtain a vertex $s_1\in F$ and a partition $F_1,F_2,F_4,\overline F$
of $F\setminus s_1$, a vertex $s_2\in\overline F$ and a partition $F_3,F_5,F_6$ of $\overline F\setminus s_2$ such that
\begin{align}\label{partition3}
|F_i| \leq 2N + \wavytriangle\left(\bigcup_{1\leq j <i} F_j \right) + I_i \text{ for }1 \le i \le 6
\quad\text{and}\quad
|F_1|+|F_2|\ge\frac32N+X,
\end{align}
where $I_i\in\{0,1\}$ is $1$ only if $|F|=8N+X+3$ and $i=6$.
Let $A^{(0)}=A$.
In the following, for $1\leq i\leq 6$, we iteratively embed $F_i$ into $A^{(i-1)}$ and denote the resulting graph after the embedding by $A^{(i)}$. We proceed in three main steps, that ultimately construct an embedding of $F$:
\begin{enumerate}
    \item\label{step1}
    \begin{enumerate}
        \renewcommand{\labelenumii}{\alph{enumii})}
        \item\label{step1a} Embed $F_1,\dots,F_{i_1}$, choosing $i_1$ minimal so that $r_{A'}$ has no child in $A^{(i_1)}$.
        \item\label{step1b} Place $s_1$ at $r_{A'}$.
    \end{enumerate}
    \item\label{step2}
    \begin{enumerate}
        \renewcommand{\labelenumii}{\alph{enumii})}
        \item\label{step2a} Embed $F_{i_1+1},\dots, F_{i_2}$, choosing $i_2$ minimal so that $r_2=r_{A'}-1$ has no child in~$A^{(i_2)}$.
        \item\label{step2b} Place $s_2$ at $r_2$.
    \end{enumerate}
    \item\label{step3} Embed the remaining forests $F_{i_2+1},\dots, F_6$. 
\end{enumerate}
We begin with Step~\ref{step1}, starting with $i=1$. 
Let $u$ be the lexicographically largest child of $r_{A'}$ in $A^{(i-1)}$.
Moreover, let $U^{(i)}$ be the induced subgraph of $A^{(i-1)}$ on~$r_{A'}$ and on~$u$,~$u-1$,~$u-2$ with descendants $D_{u}$, $D_{u-1}$, $D_{u-2}$ (see also Figure \ref{fig:IH1 ternary} for a visualization of $U^{(i)}$ and $A^{(i-1)}$ where $X=\wavytriangle(F_1\cup\dots\cup F_{i-1})$). By Observation \ref{obs:2}, $U^{(i)}$ is admissible.
Note that $|U^{(i)}|=2N+\wavytriangle(F_1\cup\dots\cup F_{i-1})+1$ ensures that $|F_i|<|U^{(i)}|$ for $1\leq i \leq 5$.
Hence, we use the induction hypothesis to embed $F_i$ into $A^{(i-1)}$ using the admissible graph $U^{(i)}$. We repeat this procedure and thereby embed $F_i$ into $A^{(i)}$ until $r_{A'}$ has no child in the resulting graph $A^{(i_1)}$ anymore. Since $|F|>5N+X+2$, this process terminates with $i_1\leq 5$. Moreover, $r_2=r_{A'}-1$ has at least one child in $A^{(i_1)}$, since $|F_{i_1}|\leq 2N+\wavytriangle(F_1\cup\dots\cup F_{i_1-1})$, by \eqref{partition3}.
Note that $r_{A'}$ is connected to every vertex in $A'$ and $T_{h-1}^2$ thanks to the \ref{en: type 1 edges} and \ref{en: type 2 edges} edges. Thus, placing $s_1$ at $r_{A'}$ yields a proper embedding of the first $i_1$ forests.

We continue with Step~\ref{step2} and $i\geq i_1+1$. Let $u'$ be the lexicographically largest child of $r_2$ in $A^{(i-1)}$.
We consider the induced subgraph $U^{(i)}$ of $A^{(i-1)}$ on $r_2$ and on $u'$, $u'-1$, $u'-2$ with their descendants $D_{u'}$, $D_{u'-1}$, $D_{u'-2}$ (see also Figure \ref{fig:IH1 ternary} restricted to the dotted region for a visualization of $U^{(i)}$ and $A^{(i-1)}$ where $X=\wavytriangle(F_1\cup\dots\cup F_{i-1})$). By Observation \ref{obs:2}, $U^{(i)}$ is admissible and $|U^{(i)}|=2N+\wavytriangle(F_1\cup\dots\cup F_{i-1})+1$ ensures that $|F_i|<|U^{(i)}|$ except for $I_6=1$, where $I_6=1$ can only happen when $|F|=8N+X+3$, which implies $i_2\leq5$.
We use the induction hypothesis to embed $F_i$ into $A^{(i-1)}$ using the admissible graph $U^{(i)}$ until $r_{2}$ has no child in $A^{(i_2)}$. 
Note that this works well with placing $s_1$ at $r_{A'}$ in Step~\ref{step1}, since the \ref{en: type 2 edges} edges connect $r_{A'}$ to every vertex in $T_{h-1}^2$ and $T_{h-1}^3$.
By \eqref{partition3}, $|F_1|+|F_2|\geq \frac{3}{2}N+X$, so that placing $s_2$ at $r_2$ provides a proper embedding of the first $i_2$ forests and $s_1$, $s_2$ thanks to the \ref{en: type 1 edges}, \ref{en: type 2 edges}, and \ref{en: type 3 edges} edges, as depicted Figure~\ref{fig:admissible backwards ternary}. 

In order to complete the proof we observe that after the first two steps, the forsets $F_1,\dots,F_{i_2}$ and the separating vertices $s_1, s_2$ are embedded at the first $|F_1|+\dots+|F_{i_2}|+2$ vertices in the eating order of~$A$. Thus, the remaining graph is admissible, by Observation~\ref{obs:1}. This allows us to carry over Step~\ref{step3}, by embedding~$F_{i_2+1}\cup\dots\cup F_6$ into the remaining admissible graph using the induction hypothesis. The proof is completed.

\paragraph{Remark}
While embedding $F$ into $A$ we used at several places \ref{en: type 1 edges} and \ref{en: type 2 edges} edges, while \ref{en: type 3 edges} edges were used only in the very last case. The reason is the lower bound $|F_1|+|F_2|\geq \frac{3}{2}N+X$ in~\eqref{partition3} that originates from Lemma \ref{lem:sep-main-cons2}. As a consequence, after embedding $F_1$ and $F_2$, up to $\lfloor\frac{1}{2}N\rfloor$ unused vertices may remain in the subgraph $A'$.
The \ref{en: type 3 edges} edges are added precisely to connect the separating vertex, which is embedded at $r_2$, to these remaining vertices. If one could ensure a stronger bound in Lemma~\ref{lem:sep-main-cons2}, for example as in Lemma~\ref{lem:sep-main-cons1}, fewer (or no) \ref{en: type 3 edges} edges would be necessary, which would improve the upper bound in Theorem \ref{thm:main}. However, there exist trees for which this lower bound cannot be improved. For example, let $N>0$ and $X=N$, and let $H$ consist of three rooted trees $H_1, H_2, H_3$, each of size $\lceil\frac{5}{2}N\rceil$ (assuming that~$\lceil\frac{5}{2}N\rceil>\frac{5}{2}N$), together with a single additional vertex $v$ only connected to the roots of $H_1, H_2, H_3$. In this sense, our construction is tight.

\begin{figure}[tb]
\centering
\begin{tikzpicture}[scale=0.9, every node/.style={transform shape}]
    \node[draw, circle, fill=black, inner sep=2pt, label={[label distance=-1mm]-280:{$r_A$}}] (root2) at (0,-0.2) {};

    \node[draw, circle, fill=black, inner sep=2pt, label={[label distance=1mm]180:$r_3$}] (child1) at (-2.6,-1) {};
    \node[draw, circle, fill=black, inner sep=2pt, label={[label distance=-2mm]-280:$r_2$}] (child2) at (0,-1) {};
    \node[draw, circle, fill=black, inner sep=2pt, label=right:{$r_{A'}$}] (child3) at (3.3,-1) {};


    \node[draw, circle, fill=black, inner sep=2pt, label={[label distance=0.8cm]-90:$N$}] (child21) at (-1.1,-1.5) {};
    \node[draw, circle, fill=black, inner sep=2pt, label={[label distance=0.8cm]-90:$N$}] (child22) at (0,-1.5) {};
    \node[draw, circle, fill=black, inner sep=2pt, label={[label distance=0.8cm]-90:$N$}] (child23) at (1.1,-1.5) {};

    \node[draw, circle, fill=black, inner sep=2pt, label={[label distance=0.45cm]-90:$Y$}] (child31) at (2.2,-1.5) {};
    \node[draw, circle, fill=black, inner sep=2pt, color=gray, label={[label distance=0.8cm,color=gray]-90:$N$}] (child32) at (3.3,-1.5) {};
    \node[draw, circle, fill=black, inner sep=2pt, color=gray, label={[label distance=0.55cm,color=gray]-90:$X$}] (child33) at (4.4,-1.5) {};

    \draw (root2) -- (child1);
    \draw (root2) -- (child2);
    \draw (root2) -- (child3);
    \draw (child2) -- (child21);
    \draw (child2) -- (child22);
    \draw (child2) -- (child23);

    \draw (child3) -- (child31);
    \draw[draw=gray] (child3) -- (child32);
    \draw[draw=gray] (child3) -- (child33);

    \draw[dashed] (child2) -- (child31);
        
    \node[label={[label distance=1.3cm]-90:$|T^*_{h-1}|$}] (label1) at (child1) {};
    \draw (-2.6,-1) -- (-3.3,-3) -- (-1.9,-3) -- cycle;

    \draw (-1.1,-1.5) -- (-1.5,-3) -- (-0.7,-3) -- cycle; 
    \draw (0,-1.5) -- (-0.4,-3) -- (0.4,-3) -- cycle; 
    \draw (1.1,-1.5) -- (0.7,-3) -- (1.5,-3) -- cycle; 

    \draw[draw=gray] (4.4,-1.5) -- (4.8,-2.6) decorate [decoration={zigzag,segment length=2mm}] {-- (4,-3)} -- cycle; 
    \draw[draw=gray] (3.3,-1.5) -- (3.7,-3) -- (2.9,-3) -- cycle; 
    \draw[draw=gray] (2.2,-1.5) -- (2.6,-3) -- (1.8,-3) -- cycle; 
    
    \draw (2.2,-1.5) -- (2.44,-2.4) decorate [decoration={zigzag,segment length=2mm}] {-- (1.85,-2.8)} -- cycle; 
    
\end{tikzpicture}

\caption{This figure illustrates the graph $A$ in Figure \ref{fig:IH1 ternary} after the embedding of forests $F_1$, $F_2$ with $|A|-(|F_1|+|F_2|)\geq 6N+Y+3$ and $Y=\protect\wavytriangle(F_1\cup F_2)$. Since $|F_1|+|F_2| \geq \frac{3}{2}N+X$, we have $Y\leq \lfloor\frac{1}{2}N\rfloor$. Because of the \ref{en: type 3 edges} edges, $r_2$ is connected to every vertex in the subgraph of size $Y$ indicated by the dashed edge. Crucially, this allows us to embed the remaining forests $F_3,\dots,F_6$ in the visible graph and place the second separating vertex $s_2$ at $r_{2}$ after $s_1$ is placed at $r_{A'}$.}
\label{fig:admissible backwards ternary}
\end{figure}

\section{Other proofs}\label{sec:other proofs}
This section contains the missing proofs of Lemmas \ref{lem:sep-main}, \ref{lem:size}, \ref{lem:sep-main-cons1}, \ref{lem:sep-main-cons2}.

\subsection{Proof of Lemma \ref{lem:sep-main}}\label{sec: proof of lem:sep-main}

Let $F$ be a forest with $|F|\geq M+1$, where $M \ge 2m$ and $m \ge 0$. 
We may assume that $F$ is a tree by adding edges, if this is necessary. 
Using Lemma \ref{lem:sep-chung}, we choose a vertex $s \in F$ and a forest $F_3$ such that $|F_3|$ is maximal with respect to $m \leq |F_3| \leq M$ and $F_3 \subseteq F \setminus s$.
Then there are two cases to distinguish.
    \begin{itemize}
        \item If $|F_3|=M$, then the forests $F_1 = F\setminus(F_3 \cup s)$, $F_2 = \emptyset$ and $F_3$ obviously satisfy the conclusion.
        \item If $|F_3| < M$, then $F\setminus (F_3\cup s)$ consists of at least two disjoint trees, since otherwise we could increment $|F_3|$ by choosing the unique neighbor of $s$ which is not in $F_3$ as the separating vertex;
        this would contradict the maximality of $|F_3|$.
        We choose $F_2$ to be the smallest tree  in $F\setminus (F_3 \cup s)$ and define $F_1= F\setminus (F_3 \cup s \cup F_2)$, so that, by construction, $|F_2| \leq |F_1|$.
        Further, $|F_2| + |F_3| > M$, since otherwise $F_2 \cup F_3$ would contradict the maximality of $|F_3|$. So,
        \[
        |F_1| = |F|-1-(|F_2| + |F_3|) \leq |F|-1-M-1.
        \]
    \end{itemize}

\subsection{Proof of Lemma \ref{lem:size}}\label{sec:size}
We first count the vertices in $T_{h,d}^*$. Since there are $d^\ell$ vertices on each level $0\leq \ell\leq h$,
\[
|T^*_{h,d}|=\sum_{0 \leq \ell\leq h} d^\ell=\frac{d^{h+1}-1}{d-1}.
\]
This allows us to also count the edges in $T_{h,d}^*$. Recall the three types of edges of a vertex $v\in V(T^*_{h,d})$.
For $v$ at level $0\leq \ell\leq h$, we get
\begin{itemize}
    \item $|D_v| = |T^*_{h-\ell,d}|-1$ edges of \ref{en: type 1 edges},
    \item $(d-1)|T^*_{h-\ell,d}|$ edges of \ref{en: type 2 edges},
    \item and $\lfloor |T^*_{h-(\ell+1),3}|/2\rfloor$ edges of \ref{en: type 3 edges}, in the case $d=3$.
\end{itemize}
We say that these edges are \emph{added} by vertex $v$ and denote the number of them by $e_{h,d}(\ell)$. 
Therefore, 
\[
e_{h,d}(\ell)=d|T^*_{h-\ell,d}|-1+\mathbf{1}_{d = 3}\left\lfloor|T^*_{h-(\ell+1),3}|/2\right\rfloor\le d|T^*_{h-\ell,d}|+\frac{\mathbf{1}_{d = 3}}{6}|T^*_{h-\ell,3}| = \left(d+\frac{\mathbf{1}_{d = 3}}{6}\right)|T^*_{h-\ell,d}|.
\]
The number of edges $e(h,d)$ in $T_{h,d}^*$ is given by the sum over all vertices and their number of added edges. 
By combining the previous statements, we obtain
\[
e(h,d)=\sum_{0\leq \ell\leq h} e_{h,d}(\ell)d^\ell\leq\left(d+\frac{\mathbf{1}_{d = 3}}{6}\right)\sum_{0\leq \ell\leq h}\frac{d^{h-\ell+1}-1}{d-1}d^\ell\leq\frac{6d+\mathbf{1}_{d = 3}}{6\ln d}\ln((d-1)|T^*_{h,d}|+1)|T^*_{h,d}|.
\]
Finally, we derive a handy presentation of $n = |U_{n,d}|$, which then allows us to achieve the desired upper bound on the number of edges. Let $h\geq 0$ be such that $|T_{h-1,d}^*|<n\leq |T_{h,d}^*|$. 
Recall that $U_{n,d}$ is the induced subgraph of $T_{h,d}^*$ given by the $n$ vertices that are eaten last. 
Observe that for any vertex at level $i>0$ with position $n\geq p\geq1$ in the eating order, there exists $\alpha_{i}\in \{0,\dots,d-1\}$ such that the vertex with position $p+\alpha_{i}|T^*_{h-i,d}|+1$ in the eating order is at level $i-1$. Applying this observation iteratively to $U_{n,d}$ starting with the first vertex in the eating order at level $\ell^*>0$ and terminating with the root at level $0$, we get
\[
n=1+ \sum_{0\leq i \leq \ell^*-1}(\alpha_{\ell^*-i}|T^*_{h-(\ell^*-i),d}|+1) = 1 + \sum_{1\leq \ell\leq\ell^*}(\alpha_\ell|T^*_{h-\ell,d}|+1),
\]
where we substituted $\ell=\ell^*-i$.
Using this expression, the edges of $U_{n,d}$ are given by the edges of all the complete subgraphs, the edges of the root and the edges of the $\ell^*$ vertices $r_\ell$, which are roots of subgraphs at level $1\leq\ell\leq \ell^*$. We know that the number of edges of $r_\ell$ is bounded by $e_{h,d}(\ell)$ and thus the number of edges of $U_{n,d}$ is bounded by
\begin{align*}
\sum_{1\leq\ell\leq \ell^*}(\alpha_\ell e(h-\ell,d)+e_{h,d}(\ell)) +O(n)
&\leq \frac{6d+\mathbf{1}_{d = 3}}{6\ln d}\ln((d-1)n)\sum_{1\leq\ell\leq\ell^*}\alpha_\ell|T^*_{h-\ell,d}|+O(n) \\
&\leq \frac{6d+\mathbf{1}_{d = 3}}{6\ln d}n\ln n+O(n).
\end{align*}
This proves the claim, since $\frac{6d+\mathbf{1}_{d = 3}}{6\ln d}=\frac{19}{6\ln 3}$ in the case $d=3$ and $\frac{d}{\ln d}$ otherwise.


\subsection{Proof of Lemma \ref{lem:sep-main-cons1}}\label{subsec:proof of sep-main-cons1}

Let $X>0$ and $N\geq X$. Let $F$ be a forest with $2N+X+2\leq |F|\leq 5N+X+2$. We define
\[
    m:=\max\big\{0,|F|-(4N+X+1)\big\}
    \quad 
    \text{and}
    \quad 
    M:=|F|-(2N+X+1).
\]
We aim to apply Lemma~\ref{lem:sep-main}. Clearly $|F|\geq M+1$, so it remains to verify that $2m\leq M$. If $|F|\leq 4N+X+1$, then $2m\leq M$ holds trivially since $M\geq1$. Moreover, if  $4N+X+1<|F|\leq 5N+X+2$, then
\[
    2m = 2|F|-2(4N+X+1) = M+|F|-(6N+X+1)\leq M.
\]
Therefore, we apply Lemma \ref{lem:sep-main} to get a vertex $s \in F$ and a partition $F_1$, $F_2$, $F_3$ of $F\setminus s$ such that
\[
    m \leq |F_3| \leq M,
    \quad
    |F_1| \leq |F| -1 -M,
    \quad
    \text{and}
    \quad
    |F_2| \leq |F_1|.
\]
By definition of $M$, this implies
\[
|F_1| \leq|F|-1-M= 2N+X
\quad \text{and}\quad
|F_1| +|F_2|=|F|-(|F_3|+1) \geq |F|-(M+1)= 2N+X.
\]
We next argue that $|F_2| \leq 2N +\wavytriangle(F_1)$. In the case $|F_1| \leq N+X$,
the inequality directly follows since $|F_2| \leq |F_1|\leq 2N$.
Otherwise $N+X<|F_1|\leq 2N+X$ so that $|F_1|\geq2N+X-\wavytriangle(F_1)$. Hence,
\[
    |F_2| = |F|-(|F_1|+|F_3|+1)\leq|F|-(2N+X-\wavytriangle(F_1)+m+1)\le 2N + \wavytriangle(F_1).
\]
It remains to show $|F_3|\leq 2N+\wavytriangle(F_1\cup F_2)+1$.
Since $|F_1| +|F_2| \geq 2N+X$, we have $|F_1|+|F_2|\geq 3N+X-\wavytriangle(F_1\cup F_2)$ and
the proof finishes by observing that
\[
    |F_3| = |F|-(|F_1|+|F_2|+1) \leq |F|-(3N+X-\wavytriangle(F_1 \cup F_2)+1) \leq 2N+\wavytriangle(F_1\cup F_2)+1.
\]

\subsection{Proof of Lemma \ref{lem:sep-main-cons2}}\label{subsec:proof of sep-main-cons2}

Let $X>0$ and $N\geq X$. Let $F$ be a forest with $5N+X+3\leq |F|\leq 8N+X+3$.
We define
\[
    m_1:=|F|-(5N+X+2)
    \quad 
    \text{and}
    \quad
    M_1:=|F|-(2N+X+1).
    \]
We aim to apply Lemma~\ref{lem:sep-main}. Since $|F|\leq 8N+X+3$, we obtain
\[
    2m_1 = 2|F|-2(5N+X+2) = M_1+|F|-(8N+X+3) \leq M_1.
\]
In particular, $2m_1\leq M_1$, and clearly $|F|\geq M_1+1$.
Thus, by Lemma \ref{lem:sep-main}, there exists a vertex $s_1 \in F$ and a partition $F_1,H_2,H_3$ of $F\setminus s_1$ such that
\begin{align}
\label{sep1}
    m_1 \leq |H_3| \leq M_1,
    \quad
    |F_1| \leq |F| -1 -M_1,
    \quad
    \text{and}
    \quad
    |H_2| \leq |F_1|.
\end{align}
Using the definition of $M_1$, this implies
\begin{align}\label{size F_1}
    |F_1|\leq |F|-1-M_1 = 2N+X.
\end{align}
In what follows we will distinguish between two cases according to $|F_1| + |H_2|$.
In particular, in each case we define further appropriate subgraphs $F_2, \dots, F_6$ of $F$ that fulfill the required size constraints.  
We will write $\wavytriangle_1:=\wavytriangle(F_1)$ and whenever $F_2,\dots,F_5$ are defined,
\[
\wavytriangle_i := \wavytriangle(F_1\cup\dots\cup F_i), \quad 1\leq i\leq 5.
\]

\paragraph{Case $|F_1| + |H_2| \leq 4N+X$.}
Set
\[
    F_2 = H_2,
    \quad
    F_4=\emptyset
    \quad
    \text{and}
    \quad
    \overline{F}=H_3.
\]
We first verify that $|F_2|\leq 2N+\wavytriangle_1$. Indeed, if 
$|F_2|>2N+\wavytriangle_1$, then
$2N+\wavytriangle_1<|F_1|\leq 2N+X$, by~\eqref{sep1} and~\eqref{size F_1}. By definition of the rest, this implies $|F_1|=2N+X-\wavytriangle_1$, which contradicts $|F_1|+|F_2|\leq 4N+X$. Therefore,
\[
    |F_2|\leq 2N+\wavytriangle_1.
\]
Recall that $|H_3|\leq M_1$, by~\eqref{sep1}. Then, using that $M_1=|F|-(2N+X+1)$ we obtain
\begin{align*}
    |F_1|+|F_2| = |F|-(|H_3|+1)\geq |F|-(M_1+1)  =2N+X
\end{align*}
and so, $|F_1|+|F_2|\geq \frac{3}{2}N+X$, as required.
For later reference, this also implies
\begin{align}\label{size F_1+F_2}
|F_1|+|F_2| \geq  3N+X-\wavytriangle_2.
\end{align}
In what follows, we further distinguish cases according to the size of $\overline{F}$.
\begin{itemize}
    \item If $|\overline{F}|\leq 2N+\wavytriangle_2+1$, let $s_2$ be any vertex in $\overline{F}$. 
    Set $F_3 = \overline{F} \setminus s_2$ such that
    \[
    |F_3|= |\overline{F}|-1\leq 2N+\wavytriangle_2.
    \]
    Choosing $F_5=F_6 = \emptyset$ then proves the claim.
    \item In the remaining case $|\overline{F}|\geq 2N+\wavytriangle_2+2$, using~\eqref{size F_1+F_2} and~$|F|\leq 8N+X+3$, we obtain
    \[
    |\overline{F}|=|H_3|=|F|-(|F_1|+|F_2|+1)\leq 5N+\wavytriangle_2+2.
    \]
    In particular, $ 2N+\wavytriangle_2+2\leq |\overline{F}|\leq 5N+\wavytriangle_2+2$.
    We apply Lemma \ref{lem:sep-main-cons1} to $\overline{F}$ with $\wavytriangle_2$ for $X$ (and $N$ for $N$). This yields a vertex $s_2\in \overline{F}$ and a partition $F_3, F_5, F_6$ of $\overline{F}\setminus s_2$ such that
    \begin{align*}
        |F_3| \leq 2N+\wavytriangle_2,
        \quad
        |F_5| \leq 2N + \wavytriangle_4,
        \quad
        |F_6| \leq 2N + \wavytriangle_5+1,
        \quad
        \text{and}
        \quad
        |F_3| +|F_5|\geq 2N+\wavytriangle_2.
    \end{align*}
    This directly shows that $F_3$ and $F_5$ satisfy the required conditions.
    If $|F_6|\leq 2N+\wavytriangle_5$, this proves the claim. Otherwise $|F_6|= 2N+\wavytriangle_5+1$. Since $|F_3|+|F_5|\geq 2N+\wavytriangle_2$, we have $|F_3|+|F_5|\geq 3N+\wavytriangle_2-\wavytriangle_5$. Together with \eqref{size F_1+F_2} this shows
    \[
        |F|=|F_1|+|F_2|+|F_3|+|F_5|+|F_6|+2 \geq 8N+X+3.
    \]
    Hence, $|F|=8N+X+3$, so $I_6=1$ and $|F_6|=2N+\wavytriangle_5+I_6$, which proves the claim.
\end{itemize}
    
\paragraph{Case $|F_1| +|H_2| > 4N+X$.}
Set 
\[
    F_2=\emptyset
    \quad 
    \text{and}
    \quad
    \overline H=H_2\cup H_3.
\]
Note that $|F_1| +|H_2| > 4N+X$ is only possible if $|F_1| \geq \frac{3}{2}N+X$, since $|H_2| \leq |F_1|$ by~\eqref{sep1}.
Thus,~$F_1$ and $F_2$ fulfill all size constraints on them.
We first treat two cases where Lemma \ref{lem:sep-main-cons1} directly applies and finally show the statement for large $|\overline{H}|$.
\begin{itemize}
    \item Assume that $|F_1|=2N+X$.
    Let $F_4=\emptyset$ and $\overline{F}=\overline{H}$. 
    The bounds on $|F|$ and $|\overline{F}|=|F|-(|F_1|+1)$ imply
    \[
    3N+2\leq |\overline{F}|\leq 6N+2.
    \]
    We apply Lemma \ref{lem:sep-main-cons1} to $\overline{F}$ with $N$ for $X$ (and $N$ for $N$) to obtain $s_2\in\overline{F}$ and a partition $F_3,F_5,F_6$ of $\overline{F}\setminus s_2$ such that
    \[
    |F_3| \leq 3N,
    \quad
    |F_5| \leq 2N + \wavytriangle_3,
    \quad
    |F_6| \leq 2N + \wavytriangle_5+1,
    \quad
    \text{and}
    \quad
    |F_3| +|F_5|\geq 3N.
    \]
    Hence, $F_3$ and $F_5$ satisfy the size conditions.
    Note that $|F_3|+|F_5|\geq 3N$ implies $|F_3|+|F_5|\geq 4N-\wavytriangle_5$.
    Therefore, $|F_6| = 2N+\wavytriangle_5+1$ is only possible in the case $|F|=8N+X+3$. Thus, $|F_6|\leq 2N+\wavytriangle_5+I_6$, completing the proof.
    \item Next, assume that $|\overline{H}|<5N+\wavytriangle_1+2$ and $|F_1|<2N+X$.
    Let $F_4=\emptyset$ and $\overline{F}=\overline H$. 
    Since $|\overline{F}|=|F|-(|F_1|+1)$ and $|F_1|\leq 2N+X-\wavytriangle_1$,
    \[
    3N+\wavytriangle_1+2 \leq |\overline{F}|\leq 5N+\wavytriangle_2+1.
    \]
    Thus, Lemma \ref{lem:sep-main-cons1} applied to the forest $\overline{F}$ with  $\wavytriangle_1$ for $X$ and $N$ for $N$ yields $s_2\in \overline{F}$ and a partition $F_3,F_5,F_6$ of $\overline{F}\setminus s_2$ such that
    \[
    |F_3| \leq 2N+\wavytriangle_1,
    \quad
    |F_5| \leq 2N + \wavytriangle_3,
    \quad
    |F_6| \leq 2N + \wavytriangle_5+1,
    \quad
    \text{and}
    \quad
    |F_3| +|F_5|\geq 2N+\wavytriangle_1.
    \]
    Thus, $F_3,F_5$ directly fulfill their size constraints. Note that $|F_3|+|F_5|\geq 2N+\wavytriangle_1$ implies $|F_3|+|F_5|\geq 3N+\wavytriangle_1-\wavytriangle_5$. If $|F_6|=5N+\wavytriangle_5+1$, this contradicts $|\overline{F}|\leq 5N+\wavytriangle_1+1$. Hence, $|F_6|\leq 2N+\wavytriangle_5$, which proves the claim.
    \item Finally we treat the case $|\overline H|\ge 5N +\wavytriangle_1+2$ and $|F_1|<2N+X$. 
    Let $F_4=H_2$. We will split off a little forest from $H_3$ which is larger than $\wavytriangle_1$.
    Our assumptions $4N+X<|F_1|+|F_4|$, $|F_1|\geq |F_4|$ (by~\eqref{sep1}) and $|F_1|<2N+X$ imply 
    \begin{align}
    \label{size F_1 + F_4}
        4N+X<|F_1|+|F_4| <5N+X
        \quad
        \text{so that}
        \quad
        |F_1|+|F_4|=5N+X-\wavytriangle(F_1\cup F_4).
    \end{align}
    This implies $\wavytriangle(F_1\cup F_4)\geq 2 \wavytriangle_1$, because
    \[
    \wavytriangle(F_1\cup F_4)=5N+X-(|F_1|+|F_4|) \ge 5N+X-2|F_1|= 5N+X-2(2N+X-\wavytriangle_1)\geq 2 \wavytriangle_1.
    \]
    Thus, we can apply Lemma \ref{lem:sep-main} to $\overline F=H_3$ with $m_2:=\wavytriangle_1$, $M_2:=\wavytriangle(F_1\cup F_4)$ to obtain $s_2 \in \overline F$ and a partition $F_6$, $F_5$, $F_3$ of $\overline F\setminus s_2$ such that
    \begin{align}\label{sep2}
    m_2 \leq |F_3| \leq M_2,
    \quad
    |F_6| \leq |\overline F| -1 -M_2,
    \quad
    \text{and}
    \quad
    |F_5| \leq |F_6|.
    \end{align}
    Consequently, $|F_3|\leq M_2\leq N$. 
    Moreover, using $|F_3|\leq M_2$ together with~\eqref{size F_1 + F_4}, we obtain
    \begin{align}\label{size F_1+F_3+F_4}
        |F_1|+|F_4|+|F_3|\leq 5N+X-\wavytriangle(F_1\cup F_4)+M_2 = 5N+X.
    \end{align}
    Note that $|F_1|=2N+X-\wavytriangle_1$, since $\frac{3}{2}N+X\leq |F_1|<2N+X$. Hence, 
    \[
    |F_1|+|F_3|\geq 2N+X-\wavytriangle_1+m_2\geq 2N+X 
    \quad \text{and}\quad
    |F_1|+|F_3|\geq 3N+X-\wavytriangle_3.
    \]
    Since $|F_4|\leq5N+X-(|F_1|+|F_3|)$, by~\eqref{size F_1+F_3+F_4}, this further implies
    \[
    |F_4|\leq 5N+X-(|F_1|+|F_3|) \leq 5N+X-(3N+X-\wavytriangle_3) = 2N+\wavytriangle_3.
    \]
    Moreover, $|F_5|\leq 2N$. Indeed, using $|F_5|\leq |F_6|$ from~\eqref{sep2} and $|F_1|+|F_4|>4N+X$ from~\eqref{size F_1 + F_4}, we obtain
    \[
    2|F_5|\leq |F_5|+|F_6| \leq |F|-(|F_1|+|F_4|+2) \leq 4N.
    \]
    Finally, we verify that $|F_6|\leq 2N+\wavytriangle_5+I_6$.
    By~\eqref{sep2}, 
    \[
    |F_3|+|F_5|= |\overline{F}|-(|F_6|+1) \geq |\overline{F}|-(|\overline{F}|-M_2)=\wavytriangle(F_1\cup F_4).
    \]
    Therefore, using~\eqref{size F_1 + F_4},
    \[
    |F_6|= |F|-(|F_1|+|F_4|+|F_3|+|F_5|+2) \leq |F|-(6N+X-\wavytriangle_5+2) \leq 2N+\wavytriangle_5+I_6.
    \]
\end{itemize}

\section{Proof of Theorem \ref{thm:main-treewidth}}\label{sec: results treewidth}

This section is organized as follows. We first strengthen the lower bound of Kaul and Wood~\cite{kaulwood2025} by refining their argument. Then we establish the upper bound by performing an appropriate blow-up of our construction for trees. Since many of the arguments are similar, we keep the exposition shorter and highlight the differences. We also assume $w = o(n)$ throughout, as this is the only case in which the statement of the theorem is not trivial.

\subsection{Lower bound}

Let $U$ contain an isomorphic copy of every graph with treewidth $w$. We assume that $n$ is large enough so that $\lfloor n/(2w+1)\rfloor \geq 1$. For $j\in \{1,\dots, \lfloor n/(2w+1)\rfloor\}$, let $S_j$ be the complete bipartite graph $K_{w,\lfloor n/j\rfloor-w}$ where the vertices are partitioned in two classes $A_j$ and $B_j$, with $|A_j|=w$ and $|B_j| =\lfloor n/j\rfloor-w$. Moreover, let $H_j$ be the disjoint union of $j$ copies $S_j^1,\dots,S_j^j$ of $S_j$. Since $|H_j| \leq n$ and $\text{tw}(H_j) \leq w$, the graph $U$ contains a subgraph $\mathcal{H}_j$ isomorphic to $H_j$.
Let $\mathcal{A}_j^* = \mathcal{A}_j^1 \cup \dots  \cup \mathcal{A}_j^j$ and $\mathcal{B}_j^* = \mathcal{B}_j^1 \cup \dots  \cup \mathcal{B}_j^j$ where $\mathcal{A}_j^i$ and $\mathcal{B}_j^i$ are the set of vertices in $\mathcal{H}_j$ corresponding to $A_j^i$ and $B_j^i$, respectively.

We proceed by coloring edges and vertices in $U$. Starting with $j=1$, we color every vertex in $\mathcal{A}_1^*$ and every edge in $\mathcal{H}_1$. In step $j\geq2$, we consider $\mathcal{H}_j$ where some vertices and edges might already have been colored due to the preceding $j-1$ steps. We order the uncolored vertices in $\mathcal{A}_j^*$ by the highest number of uncolored incident edges in $\mathcal{H}_j$ and color the first $w$ vertices in this order. Finally, we color all edges in $\mathcal{H}_j$ incident to newly colored vertices.
\vspace{2mm}

\noindent\textbf{Claim:} For each step $j$ the number of newly colored edges is at least $w (\lfloor n/j \rfloor -2w)$.
\vspace{1mm}

\noindent This proves the lower bound since the claim asserts that the number of colored edges is at least
\[
\sum_{1\leq j\leq \lfloor n/(2w+1)\rfloor} w (\lfloor n/j \rfloor -2w) \geq w\int_{1}^{\lfloor n/(2w+1)\rfloor} (n/j-2w-1) \, dj = nw \ln(n/w) -O(nw).
\]
It remains to prove the claim. For $j=1$ there are obviously $w(n-w)$ edges in $\mathcal{H}_1$. Let $j\geq2$. After step $j-1$, there are $(j-1)w$ colored vertices and every colored edge contains at least one colored vertex. For $i \in \{1,\dots,j\}$, we say that $\mathcal{B}_j^i$ is \textit{blocked} if the set contains at least $w+1$ colored vertices. Let $b_j$ be number of blocked vertex sets. Then there are at most $(j-1)w-b_j(w+1)$ colored vertices, and hence at least $jw-(j-1)w+b_j(w+1) = w(b_j+1)+b_j$ uncolored vertices, in $\mathcal{A}_j^*$. If $\mathcal{B}_j^i$ is \textit{not} blocked, we say that every uncolored vertex in $\mathcal{A}_j^i$ is \textit{available}. Notice that every available vertex is contained in at least $\lfloor n/j \rfloor-2w$ disjoint uncolored edges in $\mathcal{H}_j$. The number of available vertices is at least
\[
w(b_j+1)+b_j - b_jw = w + b_j \geq w. 
\]
Hence, by coloring $w$ available vertices and all adjacent edges in $\mathcal{H}_j$, at least $w(\lfloor n/j \rfloor-2w)$ edges are newly colored, as claimed.

\subsection{Upper bound}

\begin{figure}[tb]
    \centering
\begin{tikzpicture}[font=\sffamily,scale=0.85, every node/.style={transform shape}]

"\clip (0cm,0.7cm) rectangle (14.8cm,6.5cm);
\coordinate (v0) at (7.4,5.5);
\coordinate (v1) at (2.42307692307692,3.86666666666667);
\coordinate (v2) at (7.4,3.86666666666667);
\coordinate (v3) at (12.3769230769231,3.86666666666667);
\coordinate (v11) at (0.83076923076923,2.23333333333333);
\coordinate (v12) at (2.42307692307692,2.23333333333333);
\coordinate (v13) at (4.01538461538462,2.23333333333333);
\coordinate (v21) at (5.80769230769231,2.23333333333333);
\coordinate (v22) at (7.4,2.23333333333333);
\coordinate (v23) at (8.99230769230769,2.23333333333333);
\coordinate (v31) at (10.7846153846154,2.233333333333333);
\coordinate (v32) at (12.3769230769231,2.233333333333333);
\coordinate (v33) at (13.9692307692308,2.23333333333333);
\draw (v0) -- (v1);
\draw (v0) -- (v2);
\draw (v0) -- (v3);
\draw (v1) -- (v11);
\draw (v1) -- (v12);
\draw (v1) -- (v13);
\draw (v2) -- (v21);
\draw (v2) -- (v22);
\draw (v2) -- (v23);
\draw (v3) -- (v31);
\draw (v3) -- (v32);
\draw (v3) -- (v33);
\node[draw, circle, minimum size=0.9cm, inner sep=0pt, fill=white, label=right:{\footnotesize$(\emptyset,i)$}] (root2) at (v0) {\footnotesize$K_{w+1}$};
\node[draw, circle, minimum size=0.9cm, inner sep=0pt, fill=white, label=right:{\footnotesize$(1,i)$}] (root2) at (v1) {\footnotesize$K_{w+1}$};
\node[draw, circle, minimum size=0.9cm, inner sep=0pt, fill=white, label=right:{\footnotesize$(2,i)$}] (root2) at (v2) {\footnotesize$K_{w+1}$};
\node[draw, circle, minimum size=0.9cm, inner sep=0pt, fill=white, label=right:{\footnotesize$(3,i)$}] (root2) at (v3) {\footnotesize$K_{w+1}$};
\node[draw, circle, minimum size=0.9cm, inner sep=0pt, fill=white, label=below:{\footnotesize$(11,i)$}] (root2) at (v11) {\footnotesize$K_{w+1}$};
\node[draw, circle, minimum size=0.9cm, inner sep=0pt, fill=white, label=below:{\footnotesize$(12,i)$}] (root2) at (v12) {\footnotesize$K_{w+1}$};
\node[draw, circle, minimum size=0.9cm, inner sep=0pt, fill=white, label=below:{\footnotesize$(13,i)$}] (root2) at (v13) {\footnotesize$K_{w+1}$};
\node[draw, circle, minimum size=0.9cm, inner sep=0pt, fill=white, label=below:{\footnotesize$(21,i)$}] (root2) at (v21) {\footnotesize$K_{w+1}$};
\node[draw, circle, minimum size=0.9cm, inner sep=0pt, fill=white, label=below:{\footnotesize$(22,i)$}] (root2) at (v22) {\footnotesize$K_{w+1}$};
\node[draw, circle, minimum size=0.9cm, inner sep=0pt, fill=white, label=below:{\footnotesize$(23,i)$}] (root2) at (v23) {\footnotesize$K_{w+1}$};
\node[draw, circle, minimum size=0.9cm, inner sep=0pt, fill=white, label=below:{\footnotesize$(31,i)$}] (root2) at (v31) {\footnotesize$K_{w+1}$};
\node[draw, circle, minimum size=0.9cm, inner sep=0pt, fill=white, label=below:{\footnotesize$(32,i)$}] (root2) at (v32) {\footnotesize$K_{w+1}$};
\node[draw, circle, minimum size=0.9cm, inner sep=0pt, fill=white, label=below:{\footnotesize$(33,i)$}] (root2) at (v33) {\footnotesize$K_{w+1}$};"

\draw[loosely dotted, rounded corners=9pt,line width=0.35mm] ($(v0)+(1.4,0.6)$) -- ($(v0)+(-0.6,0.6)$) -- ($(v1)+(-0.7,0.5)$) -- ($(v11)+(-0.7,0.5)$) -- ($(v11)+(-0.7,-1.2)$) -- ($(v13)+(0.7,-1.2)$) -- ($(v13)+(0.7,0.5)$) --  ($(v1)+(1.2,-0.2)$) -- ($(v0)+(1.4,-0.2)$) -- cycle;

\end{tikzpicture}
\caption{\small The figure depicts the vertex set of $U^w_{n_1}$ with $n_1=13(w+1)$, which arises from $U_{13,3}$ by replacing every vertex with a clique  $K_{w+1}$. Given a vertex $v\in U_{13,3}$, the labels after the blow-up are $(v,i)$, for $1\leq i\leq w+1$, denoted next to the circle. 
Moreover, removing vertices following the eating order, also determines the vertex set for other graphs $U^w_n$ of height $3$. For instance, after eating the first $8(w+1)$ vertices, the dotted region shows the vertex set of $U^w_{n_1}$ with $n_1= 5(w+1)$.
}
\label{fig:universal treewidth}
\end{figure}

The proof of the upper bound is given in three steps.
First, we construct the graphs $U_{n}^w$ using the framework of Section~\ref{sec:proof-strategy}. Next, we adapt the Lemma~\ref{lem:sep-main} on separating vertices in trees, along with its consequence, Lemma~\ref{lem:sep-main-cons1} and~\ref{lem:sep-main-cons2}. Finally, we prove that the graphs $U_{n}^w$ are universal for graphs on $n$ vertices with treewidth $w$.

Let $n^* = \lceil n/(w+1)\rceil$.
We start with the universal graph $U_{n^*,3}$, constructed in Section~\ref{sec:proof-strategy}, and perform a blow-up as follows. We replace every vertex $v$ in $U_{n^*,3}$ by a complete graph  $K_{w+1}$ of size $w+1$ with vertex labels $(v,1),\dots,(v,w+1)$. Two vertices $(v,i)\neq (u,j)$ are connected if $v$ and $u$ are connected in $U_{n^*,3}$ or $v=u$. Moreover, we naturally extend the eating order $``\succ"$ by
\[
(v,i) \succ (u,j) \Longleftrightarrow v\succ u \text{ or } v=u \text{ and }i <j.
\]
The constructed graph has $wn^* \geq n$ vertices and we remove vertices following the eating order to obtain the graph $U_{n}^w$ on $n$ vertices, see Figure~\ref{fig:universal treewidth} as an example. For a vertex $(v,i) \in U_n^w$, let $D_{(v,i)}$ denote the set of all \textit{descendants} and $C_{(v,i)}$ the \textit{clique} of $(v,i)$ given by
\[
D_{(v,i)} = \left\{(u,j) \in V(U_{n}^w) : u \in D_v \right\}
\quad
\text{and}
\quad 
C_{(v,i)} =\left\{(u,j) \in V(U_{n}^w) : u=v, i\neq j  \right\}.
\]
Also the operations $\pm$ are naturally adapted from Section~\ref{sec:proof-strategy} by $(v,i)\pm a := (v\pm a,i)$.
Using this notation, the edges of $U_n^w$ connect every vertex $(v,i)\in V(U_n^w)$ to
\begin{typesofedgestw}
    \item\label{en: type 0 edges-treewidth} every vertex in $C_{(v,i)}$;
    \item\label{en: type 1 edges-treewidth} every vertex in $D_{(v,i)}$;
    \item\label{en: type 2 edges-treewidth} $(v,i)-1, (v,i)-2$ and every vertex in $C_{(v,i)-1}\cup C_{(v,i)-2}$ and $D_{(v,i)-1}\cup D_{(v,i)-2}$;
    \item\label{en: type 3 edges-treewidth} every vertex in the half (rounded down and with respect to the maximum number in such vertex set) of $\{(u,w+1)\}\cup C_{(u,w+1)} \cup D_{(u,w+1)}$ that is eaten last, where $u$ is the lexicographically smallest child of $v+1$.
\end{typesofedgestw}
The following lemma shows that $U_n^w$ has the desired number of edges, which we directly deduce from the number of edges in $U_{n^*,3}$.
\begin{lemma}\label{lem: size treewidth}
    The graph $U_n^w$ has $\frac{19}{6\ln3}(w+1)n\ln(n/w)+O(wn)$ edges.
\end{lemma}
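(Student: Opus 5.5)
The plan is to compare $U_n^w$ with the blow-up of $U_{n^*,3}$, where $n^*=\lceil n/(w+1)\rceil$, and count edges class by class. Since $U_n^w$ is an induced subgraph of the full blow-up of $U_{n^*,3}$, it suffices to bound the edges of the full blow-up. The key observation is that every edge $\{(v,i),(u,j)\}$ with $v\neq u$ comes from an edge $uv$ of $U_{n^*,3}$ of \ref{en: type 1 edges}, \ref{en: type 2 edges} or \ref{en: type 3 edges}. Conversely, each such edge of $U_{n^*,3}$ produces at most $(w+1)^2$ edges in the blow-up.

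For \ref{en: type 3 edges-treewidth} edges, the half of $\{(u,w+1)\}\cup C_{(u,w+1)}\cup D_{(u,w+1)}$ eaten last has size at most about $\tfrac12(w+1)|\{u\}\cup D_u|$. This is $(w+1)$ times the corresponding tree quantity up to an additive $O(w)$ per vertex. So each original edge again yields at most $(w+1)^2$ edges, with an $O(w)$ error per blown-up vertex. \ref{en: type 0 edges-treewidth} edges contribute $n^*\binom{w+1}{2}=O(wn)$.

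Hence the total is at most $(w+1)^2 e(U_{n^*,3}) + O(wn)$. Lemma~\ref{lem:size} gives $e(U_{n^*,3})\le \frac{19}{6\ln 3}n^*\ln n^* + O(n^*)$. Substituting $n^*\le n/(w+1)+1$ yields
\[
(w+1)^2\Big(\tfrac{19}{6\ln3}\big(\tfrac{n}{w+1}+1\big)\ln\tfrac{n}{w+1}+O(n/(w+1))\Big)=\tfrac{19}{6\ln3}(w+1)n\ln\tfrac{n}{w+1}+O\big((w+1)^2\ln n\big)+O(wn).
\]
Then $\ln(n/(w+1))\le \ln(n/w)$. The error term $(w+1)^2\ln n$ is where some care is needed, since it must be $O(wn)$. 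The $+1$ from rounding $n^*$ only affects the vertices eaten last, namely the partially filled clique. I would avoid this term by bounding directly with $|V(U_n^w)|=n$: the vertices of $U_n^w$ lie in at most $n^*$ cliques, and each vertex $(v,i)$ adds at most $(w+1)\cdot e_{h,3}(\ell)$ edges, where $\ell$ is the level of $v$. Summing $(w+1)e_{h,3}(\ell)$ over vertices, grouped as in the proof of Lemma~\ref{lem:size}, gives $(w+1)\cdot\frac{19}{6\ln 3}\, n \ln n^* + O(wn)$. This is the desired bound, since $\ln n^*\le \ln(n/w)+O(1/w)$ and the resulting error $O(n)$ is absorbed into $O(wn)$.

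The main obstacle is this per-vertex bookkeeping: the counts $e_{h,3}(\ell)$ must be transferred with the factor $(w+1)$ and not $(w+1)^2$. This works because each blown-up vertex adds $(w+1)$ times the tree count: its tree neighbours are multiplied by $w+1$, while the vertex itself is a single vertex. The second concern is the floor in \ref{en: type 3 edges-treewidth}, which only costs $O(1)$ per vertex and hence $O(n)$ overall. Once these are settled, the rest of the computation in Section~\ref{sec:size} carries over verbatim, with $n$ replaced by $n^*$ in the logarithm.

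If the paper intends the matching lower-order form, the statement is an upper bound on the edge count. Since the construction has at least that many edges, only the upper bound needs proof.
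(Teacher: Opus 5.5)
Your argument is correct, but the route you abandon is exactly the paper's proof. The paper bounds the edge count by $(w+1)^2e(U_{n^*,3})+\binom{w+1}{2}n^*$, invokes Lemma~\ref{lem:size}, and simplifies.

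The obstacle you saw there comes from loose bookkeeping. With $n^*\le \frac{n}{w+1}+1$, the extra term is $(w+1)^2\ln n^*$, not $(w+1)^2\ln n$. Also, the main term should carry $\ln\bigl(\frac{n}{w+1}+1\bigr)$, since $\ln\frac{n}{w+1}$ is only a lower bound for $\ln n^*$. Both terms are then harmless: $(w+1)^2\ln n^*\le (w+1)^2n^*\le (w+1)n+(w+1)^2=O(wn)$, and $\ln n^*\le \ln(n/w)+\frac{w+1}{n}$ costs only $O\bigl((w+1)^2\bigr)=O(wn)$ in the main term.

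Your per-vertex regrouping is nevertheless valid. The main term comes out as $(w+1)n\ln n^*$, with $n$ rather than $(w+1)n^*$, because every blown-up complete subtree of the decomposition lies entirely in $U_n^w$. The partially removed clique is the blow-up of the path vertex $r_{\ell^*}$, which is eaten \emph{first}, not last. The path cliques contribute $O\bigl((w+1)^2n^*\bigr)=O(wn)$. However, this re-derives the counting of Lemma~\ref{lem:size} instead of using it as a black box, and gains nothing in the final bound.

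There are two harmless slips in that version. First, $\ln n^*\le \ln(n/w)+O(1/w)$ fails when $w\gg\sqrt n$; the correct error is $(w+1)/n$, costing $O(w^2)$ rather than $O(n)$. Second, the floor in the blown-up Type~3 edges can cost up to $\lfloor (w+1)/2\rfloor$ per vertex, i.e.\ $O(wn)$ rather than $O(n)$ overall. Both are absorbed in $O(wn)$. The second disappears entirely if you use the unfloored bound $e_{h,3}(\ell)\le\frac{19}{6}|T^*_{h-\ell,3}|$ from Section~\ref{sec:size}.
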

\begin{proof}
    In the construction, we start with a universal graph $U_{n^*,3}$ on $n^* =\lceil n/(w+1) \rceil$ vertices. Lemma~\ref{lem:size} shows that $U_{n^*,3}$ has $\frac{19}{6\ln3}n^*\ln n^* + O(n^*)$ edges. Due to the blow-up, every edge in $U_{n^*,3}$ occurs $(w+1)^2$ times in the graph $U_n^w$. Moreover, every clique (except for one, which might be partially eaten up) adds $\binom{w+1}{2}$ edges. Since we add $n^*$ cliques, the number of edges is bounded by
    \[
    (w+1)^2\left(\frac{19}{6\ln3}n^*\ln n^* + O\left(n^*\right)\right) + \binom{w+1}{2} n^* \leq \frac{19}{6\ln3} (w+1)n \ln(n/w) + O(wn).
    \]
\end{proof}
It remains to show that $U_n^w$ is universal to complete the proof of Theorem \ref{thm:main-treewidth}. 
Let $h\geq 0$. As a further preparation we also define the graph $T^w_{h}$ as the blow-up of the graph $T_{h,3}^*$ constructed in Section~\ref{sec:proof-strategy}. This means that $T^w_{h}$ is the perfect ternary tree with height $h$, where every vertex is replaced by a clique of size $w+1$ and the edges of \ref{en: type 0 edges-treewidth} -- \ref{en: type 3 edges-treewidth} are added as described above. 
As in Section \ref{sec:ternary trees}, we establish universality for a broader class of so called admissible graphs.
\begin{definition}
    A graph $A\neq \emptyset$ is called \emph{admissible}, if there exists $h$ such that $A$ isomorphic to the induced subgraph on the last $|A|$ vertices in the eating order of $T^w_{h}$. The eating order on $A$ is thereby naturally inherited from $T^w_{h}$.
\end{definition}
As in Section~\ref{sec:proof-strategy}, there is a recursive description of admissible graphs.
\begin{remark}\label{rem:admissible-ternary}
    Let $A$ be an admissible graph. Then there exists $T^w_{h}$ for some $h$ such that one of the following holds.
    \begin{enumerate}
        \item There exists $1\leq c \leq w+1$ such that $A$ is given by the last $c$ vertices in the eating order of $T^w_{h}$.
        \item There exists an admissible subgraph $A'$ of $T^w_{h-1}$ such that $A$ is isomorphic to one of the three possible ways shown in Figure~\ref{fig:admissible-treewidth}. Further, $A$ inherits the eating order from $T^w_{h-1}$ as follows. The vertices of $A'$ are eaten first, given by the order on $A'$. Next, the vertices of the up to two copies of $T^w_{h-1}$ are eaten one after another, given by the eating order on $T^w_{h-1}$. Finally, the vertices at the root position are eaten by the given eating order on them.
    \end{enumerate}
\end{remark}
\begin{figure}[ht!]
    \centering
    \begin{subfigure}[t]{0.3\textwidth}
        \centering
        \begin{tikzpicture}[scale=0.85, every node/.style={transform shape}]
            \draw[very thick] (-2.15,-4.5) -- (-1.5,-2.5) -- (-0.85,-4.1) decorate [decoration={zigzag,segment length=2mm}] {-- cycle};
            \node[draw, circle, minimum size=0.9cm, inner sep=0pt, fill=white, label=right:{$R_A$}] (root2) at (0,-1) {\footnotesize$K_{w+1}$};
            \node[draw, circle, minimum size=0.9cm, inner sep=0pt, fill=white, label={[label distance=0.65cm]-90:$A'$}] (child1) at (-1.5,-2.5) {};

            \fill[white] (-2.15,-4.5) -- (-1.5,-2.5) -- (-0.85,-4.1) decorate [decoration={zigzag,segment length=2mm}] {-- cycle};
            \node[circle, minimum size=0.9cm, inner sep=0pt, label={[label distance=0.6cm]-90:$A'$}] (child1) at (-1.5,-2.5) {};
            \draw (root2) -- (child1);
        \end{tikzpicture}
        \caption{}
        \label{fig:admissible-a-treewidth}
    \end{subfigure}
    %
    \begin{subfigure}[t]{0.3\textwidth}
        \centering
        \begin{tikzpicture}[scale=0.85, every node/.style={transform shape}]
            \draw[very thick] (-2.4,-4.5) -- (-1.75,-2.5) -- (-1.1,-4.5) -- cycle;
            \node[draw, circle, minimum size=0.9cm, inner sep=0pt, fill=white] (child1) at (-1.7,-2.5) {};
            \fill[white] (-2.4,-4.5) -- (-1.75,-2.5) -- (-1.1,-4.5) -- cycle;
            \node[circle, minimum size=0.9cm, inner sep=0pt, label={[label distance=0.9cm]-90:$T^w_{h-1}$}] (child1) at (-1.65,-2.5) {};

            \draw[very thick] (-0.65,-4.5) -- (0,-2.5) -- (0.65,-4.1) decorate [decoration={zigzag,segment length=2mm}] {-- cycle};
            \node[draw, circle, minimum size=0.9cm, inner sep=0pt, fill=white, label=right:{$R_A$}] (root2) at (0,-1) {\footnotesize$K_{w+1}$};
            
            \node[draw, circle, minimum size=0.9cm, inner sep=0pt, fill=white, label={[label distance=0.65cm]-90:$A'$}] (child2) at (0,-2.5) {};

            \fill[white] (-0.65,-4.5) -- (0,-2.5) -- (0.65,-4.1) decorate [decoration={zigzag,segment length=2mm}] {-- cycle};

            \node[ circle, minimum size=0.9cm, inner sep=0pt, label={[label distance=0.6cm]-90:$A'$}] (child2) at (0,-2.5) {};
            \draw (root2) -- (child1);
            \draw (root2) -- (child2);
        \end{tikzpicture}
        \caption{}
        \label{fig:admissible-b-treewidth}
    \end{subfigure}
    %
    \begin{subfigure}[t]{0.3\textwidth}
        \centering
        \begin{tikzpicture}[scale=0.85, every node/.style={transform shape}]
            \draw[very thick] (-2.4,-4.5) -- (-1.75,-2.5) -- (-1.1,-4.5) -- cycle;
            \draw[very thick] (-0.65,-4.5) -- (0,-2.5) -- (0.65,-4.5) -- cycle;  
            \draw[very thick] (1.1,-4.5) -- (1.75,-2.5) -- (2.4,-4.1) decorate [decoration={zigzag,segment length=2mm}] {-- cycle}; 
            \node[draw, circle, minimum size=0.9cm, inner sep=0pt, fill=white, label=right:{$R_A$}] (root2) at (0,-1) {\footnotesize$K_{w+1}$};
            \node[draw, circle, minimum size=0.9cm, inner sep=0pt, fill=white, label={[label distance=0.9cm]-90:$T^w_{h-1}$}] (child1) at (-1.75,-2.5) {};
            
            \node[draw,circle, minimum size=0.9cm, inner sep=0pt,fill=white, label={[label distance=0.9cm]-90:$T^w_{h-1}$}] (child2) at (0,-2.5) {};
            \node[draw, circle, minimum size=0.9cm, inner sep=0pt, fill=white, label={[label distance=0.6cm]-90:$A'$}] (child3) at (1.75,-2.5) {};

            \fill[white] (-2.4,-4.5) -- (-1.75,-2.5) -- (-1.1,-4.5) -- cycle;
            \fill[white] (-0.65,-4.5) -- (0,-2.5) -- (0.65,-4.5) -- cycle;  
            \fill[white] (1.1,-4.5) -- (1.75,-2.5) -- (2.4,-4.1) decorate [decoration={zigzag,segment length=2mm}] {-- cycle}; 

            \node[circle, minimum size=0.9cm, inner sep=0pt] (child1) at (-1.75,-2.5) {};
            \node[circle, minimum size=0.9cm, inner sep=0pt,  label={[label distance=0.9cm]-90:$T^w_{h-1}$}] at (-1.7,-2.5) {};
            
            \node[circle, minimum size=0.9cm, inner sep=0pt] (child2) at (0,-2.5) {};
            \node[circle, minimum size=0.9cm, inner sep=0pt, label={[label distance=0.9cm]-90:$T^w_{h-1}$}] at (0.1,-2.5) {};
            
            \node[circle, minimum size=0.9cm, inner sep=0pt,  label={[label distance=0.6cm]-90:$A'$}] (child3) at (1.75,-2.5) {};
            \draw (root2) -- (child1);
            \draw (root2) -- (child2);
            \draw (root2) -- (child3);
            
        \end{tikzpicture}
        \caption{}
        \label{fig:admissible-c-treewidth}
    \end{subfigure}
    \caption{\small The admissible graph $A$ is given by its $w+1$ root vertices $R_A$, the admissible subgraph $A'$ of $T^w_{h-1}$ and up to two copies of $T^w_{h-1}$. The vertex sets and the graph structure are indicated in the figure. Moreover, although not depicted, as an induced subgraph, $A$ contains all edges of $T^w_{h-1}$ on~$V(A)$.
    }
    \label{fig:admissible-treewidth}
\end{figure}
We also extend the definition of the \emph{rest} of a forest to graphs with treewidth~$w$.
\begin{definition}
    Let $0<X\leq N$. For a graph $G$ with treewidth at most $w$ let the \emph{rest} $\wavytriangle(G)=\wavytriangle_{N,X}(G)$ be defined by 
    \[
    \wavytriangle(G)=X-|G| \text{ for }|G|<X
    \quad\text{and}\quad
    \wavytriangle(G)=N-x \text{ for }|G|\geq X,
    \]
    where, if $|G|\geq X$, then $0\leq x<N$ is unique such that $|G|=x+kN+X$ for some $k\geq 0$.
\end{definition}
As in Section~\ref{sec:proof-strategy}, an inductive argument allows us to establish universality for all admissible graphs.
We therefore focus on finding admissible \emph{sub}graphs. The definition ensures that removing vertices following the eating order preserves the property of being admissible, as captured in the following observation.
\begin{observation}\label{obs:1-tw}
    Let $U$ be the resulting graph after removing the first $0\leq t <|A|$ vertices in the eating order of an admissible graph $A$. Then $U$ is again admissible.
\end{observation}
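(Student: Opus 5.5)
The claim is that removing an initial segment of the eating order from an admissible graph leaves an admissible graph. The plan is to unwind the definition and use that the eating order of $T^w_h$ is a total order. Since $A$ is admissible, there is an $h$ and an isomorphism $\varphi$ from $A$ onto the induced subgraph of $T^w_h$ on the last $|A|$ vertices in the eating order. By definition, $\varphi$ transports the eating order of $A$ to the order inherited from $T^w_h$.

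\textbf{Step 1.} The first $t$ vertices of $A$ are mapped by $\varphi$ to the first $t$ vertices, in the eating order of $T^w_h$, among the last $|A|$ vertices of $T^w_h$. Their complement in that set is therefore exactly the last $|A|-t$ vertices in the eating order of $T^w_h$.

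\textbf{Step 2.} We have $U = A\setminus\{\text{first } t \text{ vertices}\}$, and it is an induced subgraph of $A$. Restricting $\varphi$ to $U$ gives an isomorphism onto the induced subgraph of $T^w_h$ on its last $|A|-t$ vertices, since an induced subgraph of an induced subgraph is induced.

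\textbf{Step 3.} Because $t<|A|$, the graph $U$ is nonempty. The restricted isomorphism preserves order, so the eating order inherited by $U$ from $A$ agrees with the one inherited from $T^w_h$. Hence $U$ is admissible with the same $h$.

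The only point needing care is Step 1: the blown-up order $\succ$ on pairs $(v,i)$ must be a total order, so that "first $t$" and "last $|A|-t$" are complementary initial and final segments. This holds because $\succ$ is lexicographic on $(v,i)$, with the reversed lexicographic order on $V_{h,3}$ followed by the order on the index $i$. There is no real obstacle beyond this; the argument is the same as for Observation~\ref{obs:1}.
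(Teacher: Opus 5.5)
Your proposal is correct and takes the same route as the paper. The paper gives no separate proof and simply notes that the observation follows directly from the definition of admissibility: the last $|A|-t$ vertices of $T^w_h$ in the eating order are exactly the last $|A|$ vertices minus their first $t$. You have just written out that unwinding explicitly, including the totality of the blown-up order and the fact that the eating order is inherited.
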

Moreover, the (recursive) structure of admissible graphs described in Remark~\ref{rem:admissible-ternary} yields several additional useful admissible subgraphs.
\begin{observation}\label{obs:2-tw} 
    Let $A$ be an admissible graph and $(r,w+1), (c,w+1)\in A$ be such that $c$ is a child of~$r$.
    Moreover, let $t\in\{1,2\}$ such that $c-t$ is lexicographically larger than $c$ and set
    \[
    D=\bigcup_{0\leq i\leq t}\Big(\{(c,w+1)-i\}\cup C_{(c,w+1)-i}\cup D_{(c,w+1)-i}\Big).
    \]
    Then the induced subgraph of $A$ on the vertices $\{(r,w+1)\}\cup C_{(r,w+1)} \cup D$ is admissible. 
\end{observation}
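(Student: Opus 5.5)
The plan is to reduce Observation~\ref{obs:2-tw} to its tree analogue, Observation~\ref{obs:2}, by tracking how the blow-up acts on vertex sets, edges and eating order. Write $A$ as the last $|A|$ vertices of $T^w_h$ in the eating order, and let $\pi:(v,i)\mapsto v$ be the projection onto $V_{h,3}$. The vertex set $S:=\{(r,w+1)\}\cup C_{(r,w+1)}\cup D$ has the form $\pi^{-1}(S_0)\cap V(A)$, where
\[
S_0=\{r\}\cup\bigcup_{0\le i\le t}\big(\{c-i\}\cup D_{c-i}\big)
\]
is exactly the vertex set appearing in Observation~\ref{obs:2}. Since $(c,w+1)\in A$ and $(w+1)$ is the last copy eaten within a clique, every copy of the vertices below $c$ (and of the siblings $c-i$, eaten later) lies in $A$ up to the truncation. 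The only possibly partial clique in $S$ is that of $r$.

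First I show that $S$ is a final segment of the eating order restricted to the blown-up tree rooted at $r$. By Observation~\ref{obs:2}, $S_0$ carries an admissible structure: it is the last $|S_0|$ vertices of some $T^*_{h',3}$ under an isomorphism $\phi$ that preserves the eating order and the edge types. Since $c-t$ is lexicographically larger than $c$, the subtrees at $c-t,\dots,c$ are eaten consecutively, the one at $c$ first among them. This uses that $(c,w+1)\in A$, so the part at $c$ may be partial but everything after it is present.

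Next I lift $\phi$ to $\Phi(v,i)=(\phi(v),i)$. The clique index is preserved, so the extended order $(v,i)\succ(u,j)$ transfers. Hence $\Phi(S)$ is the last $|S|$ vertices of $T^w_{h'}$, where the truncation at the clique of $r$ matches the truncation at the root clique. This is consistent with case~1 of Remark~\ref{rem:admissible-ternary} and with the fact that the root clique is eaten last.

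Then I check edges type by type. Edges of type 0 and 1 depend only on cliques and descendant relations, and these are preserved by $\phi$. Edges of type 2 and 3 depend on the operations $\pm$ and on ``smallest child of $v+1$''. For vertices strictly below $r$, these relations refer to vertices inside $S_0$ in exactly the way they do in $T^*_{h',3}$, since this is what Observation~\ref{obs:2} guarantees.

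The main obstacle is the boundary vertices, namely $r$ itself and the copies of $c-t$. In $T^w_{h'}$ their type 2 and 3 neighbourhoods would point outside the image. I have to argue that every edge the induced subgraph needs is present, and that it has no edges the target lacks; I would interpret admissibility via isomorphism with the induced subgraph, as in the tree case. I would handle this exactly as in Observation~\ref{obs:2}: the root $r$ of the target has no predecessors, and the structure of $S_0$ is fully determined by type 1 edges from $r$ together with internal type 2 and 3 edges. Two points need care here. Type 3 edges of the blown-up graph are taken relative to $(u,w+1)$ and to the ``half eaten last'' with respect to the clique-extended order. Type 2 edges include the cliques $C_{(v,i)-1}$, which are full cliques inside $S$. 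Given these, I check that the half-sets of type 3 correspond under $\Phi$, and that completes the argument.
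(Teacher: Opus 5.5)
Lifting Observation~\ref{obs:2} through $\pi$ is a reasonable alternative to the paper, which gives no separate argument and just reads the shape of Figure~\ref{fig:admissible-treewidth} off Remark~\ref{rem:admissible-ternary}. However, your key step, that $\Phi(S)$ is the set of the last $|S|$ vertices of $T^w_{h'}$, rests on a wrong picture of $S$.

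\textbf{Where the missing vertices lie.} The clique of $r$ is never partial. Since $r$ is a proper prefix of $c$, every copy of $r$ is eaten after every copy of $c$, so $(c,w+1)\in A$ puts all of $\{(r,w+1)\}\cup C_{(r,w+1)}$ into $A$. Likewise, the blown-up subtrees at $c-1,\dots,c-t$ come after $c$ and lie entirely in $A$. Conversely, $(c,w+1)\in A$ says nothing about the descendants of $c$, which are eaten \emph{before} $c$. This holds whatever the order inside a clique; with the paper's convention $(v,i)\succ(v,j)$ for $i<j$, the copy $(v,w+1)$ is in fact eaten first. So your claim that every copy of a vertex below $c$ lies in $A$ is false. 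The vertices missing from $S$ form an initial segment of the contiguous blown-up subtree at $c$. This is exactly the situation in which the observation is used: $A''$ of size $X$, or a remainder of size $\wavytriangle(\cdot)$ in $U^{(i)}$. A truncation ``at the clique of $r$'' is not even compatible with a final segment, because the root clique is eaten last in $T^w_{h'}$.

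\textbf{The cross-parent case.} Take the application $S=U$ with $c=r_{A''}$, where $A''$ is the only child subtree of $A'$. Then $c-1,c-2$ lie in the copy of $T^w_{h-1}$ eaten after $A'$. Consequently:
\begin{itemize}
\item $S$ is not contained in the subtree of $r$.
\item The root's adjacency to the subtrees at $c-1,c-2$ comes from type~2 edges, not type~1 edges.
\item The subtrees at $c$ and $c-1$ are not eaten consecutively.
\item $r$ is eaten \emph{before} the subtrees at $c-1,c-2$ in $A$, although it is last in the target. So no relabelling preserves the inherited order, which is why the paper treats $V\cap R_{A'}\neq\emptyset$ separately.
\end{itemize}
The hypothesis on $c-t$ must also be read as ``no wrap-around'', i.e.\ $c-1,\dots,c-t$ are eaten after $c$, as you implicitly do.

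\textbf{Repair.} Take $\phi$ to be the natural prefix relabelling. It sends the subtrees at $c,c-1,\dots,c-t$ onto the child subtrees $t+1,t,\dots,1$ of the root of $T^w_{h'}$, and the full clique of $r$ onto the root clique. You need this concrete map anyway, not just an abstract isomorphism, to transport the type~3 halves of blown-up sets. Internal orders are preserved, and the only missing vertices form an initial segment of the subtree at $c$. Hence $\Phi(S)$ is a final segment no matter where $r$ sits in the order of $A$.

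\textbf{Caveat on the edge check.} Your promised two-sided edge check cannot literally succeed, and neither can the corresponding part of Observation~\ref{obs:2}. Because of the rule $\mathrm{pred}(1\dots1)=3\dots3$, the copies of the lexicographically smallest child of the target root are joined by type~2 edges to the whole blown-up subtree at the image of $c$. In $A$, by contrast, the copies of $c-t$ are in general not adjacent to all of $D_{(c,w+1)}$; for $t=2$ they are adjacent to none of it. So the induced graph on $S$ has strictly fewer edges than the target and is not isomorphic to it. What does hold is that, under $\Phi$, $S$ contains the target with its wrap-around edges deleted as a spanning subgraph. The embedding arguments never use wrap-around edges, so the observation should be stated and proved in that form.
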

Building on the two observations, we show that any admissible graph $A$ is universal for all graphs $G$ with treewidth at most $w$. Moreover, if $|G|\leq |A|-(w+1)$, we are able to embed $G$ at the first $|G|$ vertices in the eating order of $A$, leading to the following definition.
\begin{definition}
    For any graph $G$ with treewidth at most $w$ and admissible graph $A$, a mapping $\lambda:V(G)\to V(A)$ is an \emph{embedding} of $G$ into $A$ if 
    \begin{itemize}
        \item $\lambda(V(G))$ consists of the first $|G|$ vertices in the eating order of $A$, and 
        \item for every edge $uv$ in $G$, the vertices $\lambda(u)$ and $\lambda(v)$ are adjacent in $A$.
    \end{itemize}
\end{definition}
With this definition at hand, we are now able to state our main result regarding universal graphs for graphs with treewidth $w$. 
\begin{lemma}\label{lem: main-treewidth} 
    Let $A$ be an admissible graph and $G$ be a graph with $\text{tw}(G)\leq w$ and $|G|\leq |A|-(w+1)$. Then there exists an embedding $\lambda$ of $G$ into $A$. In particular, $A\setminus\lambda(V(G))$ is admissible.
\end{lemma}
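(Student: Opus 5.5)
We follow the proof of Lemma~\ref{lem:embedding} for $d=3$ almost verbatim, proving Lemma~\ref{lem: main-treewidth} by induction on $|A|$. The ``blown-up'' vertex is now the clique $R$ of $w+1$ vertices at a root position, and the role of a single separating vertex is played by a separator of size at most $w+1$.

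\textbf{Step 1: separator lemmas.} First we replace Lemma~\ref{lem:sep-main} by a treewidth version. Let $G$ have $\operatorname{tw}(G)\le w$ and $|G|\ge M+1+w$. Then there is a set $S$ with $|S|\le w+1$ and a partition $G_1,G_2,G_3$ of $G\setminus S$ with no edges between different parts, such that the size conditions of Lemma~\ref{lem:sep-main} hold, with $|F|-1$ replaced by $|G|-|S|$.

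To prove it, fix a tree decomposition of width $w$ and root it. Walk from the root towards a heavy child to find a bag $B$, playing the role of $s$. The components of $G\setminus B$ then group into the subtrees hanging off $B$, so bags behave like vertices of a tree. The maximality argument is then repeated verbatim: we pick $G_3$ maximal among unions of such branches. If $|G_3|<M$, the rest splits into at least two groups, and we let $G_2$ be the smallest one.

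Padding is the one subtlety. When $|S|<w+1$, we place dummy isolated vertices, or simply leave clique slots unused. To keep ``first $|G|$ vertices'' exact, it is cleaner to first extend $G$ to a supergraph with $\operatorname{tw}\le w$ in which every separator used has exactly $w+1$ vertices: add isolated vertices, take a $(w+1)$-tree completion, and use $|G|\le|A|-(w+1)$. Lemmas~\ref{lem:sep-main-cons1} and~\ref{lem:sep-main-cons2} are then re-derived word for word. Their proofs use only arithmetic on the part sizes, with every ``$+1$'' and ``$+2$'' replaced by $+(w+1)$ and $+2(w+1)$, and with $N,X$ now being blown-up subtree sizes. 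The required separators within a part $\overline F$ come from applying the same lemma to $\overline F$, which still has treewidth at most $w$.

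\textbf{Step 2: the induction.} If $A$ is as in case 1 of Remark~\ref{rem:admissible-ternary}, it is a clique and $G$ embeds trivially. Otherwise we follow the case split of Section~\ref{subsec:proof} word for word.

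If $|G|$ is below the magic size, we embed $G$ into the admissible subgraph $U$ given by Observation~\ref{obs:2-tw}. If $G$ reaches into the root clique of $U$, we first remove a set $S$ of the appropriate size. We place $S$ onto the root clique, which by type~0, 1 and~2 edges is complete to all of $U$, and embed the rest by induction. The hypothesis $|G|\le|A|-(w+1)$, rather than $|G|<|A|$, is exactly what guarantees this.

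In the larger cases we follow Steps~1--3 of the ternary proof. We embed $G_1,G_2,\dots$ through the admissible windows $U^{(i)}$ and place $S_1$ on the clique at $r_{A'}$ and $S_2$ on the clique at $r_2$. Adjacency of $S_j$ to the parts it touches comes from type~1 and~2 edges and, for the leftover $\le\lfloor N/2\rfloor$ vertices, from type~3 edges. Between distinct parts no edges are needed. Edges inside $S_j$ are covered by type~0 edges, and edges between $S_1$ and $S_2$ are covered by type~2 edges between the two root cliques.

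\textbf{Main obstacle.} The main obstacle is Step~1, in particular making the separators exactly of size $w+1$ and the parts land precisely on full cliques, so that ``eaten prefix'' bookkeeping with $\wavytriangle$ remains valid when clique boundaries are not aligned with part sizes. We would first try the $(w+1)$-tree completion plus isolated-vertex padding. If alignment still fails, we would allow a separator clique to be partially filled by vertices of the preceding part, relying on type~0 and type~2 edges to cover the induced adjacencies.
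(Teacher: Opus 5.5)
Your outline follows the paper's proof: induction on $|A|$, the case split of Remark~\ref{rem:admissible-ternary}, windows from Observation~\ref{obs:2-tw}, and $S_1$, $S_2$ placed on $R_{A'}$ and $R_2$ with the type~0--3 edges as you describe. Step~1, however, does not go through as written.

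The missing ingredient is a \emph{normal} tree decomposition, which the paper uses in Lemma~\ref{lem:sep-treewidth}: every bag has exactly $w+1$ vertices and adjacent bags share exactly $w$. The maximality argument depends on this. If the rest $G\setminus(G_3\cup B_z)$ lies in a single branch, moving the separator to the neighbouring bag $B_{z'}$ enlarges $G_3$ by exactly the one vertex of $B_z\setminus B_{z'}$. For an arbitrary width-$w$ decomposition this jump can be as large as $w+1$ and overshoot $M$. Then you can conclude neither that the rest splits into two groups nor that $|G_1|\le|G|-(w+1)-M$. The Kaul--Wood Lemma~\ref{lem:aux-sep-treewidth}, which supplies the first pair $(z,F)$, is likewise stated for normal decompositions.

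Such a decomposition is obtained by adding \emph{edges} only. Complete $G$ to a $w$-tree (not a $(w+1)$-tree, which has treewidth $w+1$); this is possible since every application has $|G|\ge M+w+1\ge w+1$. Padding with isolated vertices is not allowed. The dummies would occupy positions inside the eaten prefix, so after deleting them the image is no longer the first $|G|$ vertices.

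Once every separator has exactly $w+1$ vertices, your alignment worry disappears. Each $S_j$ fills a whole root clique at the moment that clique becomes first in the eating order, which is how $i_1,i_2$ are chosen. Prefixes ending inside other cliques are handled by the induction hypothesis and Observation~\ref{obs:1-tw}. So the fallback of partially filling separator cliques is not needed.

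The arithmetic of Lemmas~\ref{lem:sep-main-cons1} and~\ref{lem:sep-main-cons2} also does not transfer by simply replacing $1$ by $w+1$; three adjustments are needed.

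First, you need the hypothesis $N\ge\max\{X,w+1\}$. Already $2m\le M$ in the analogue of Lemma~\ref{lem:sep-main-cons1} requires $N\ge w+1$.

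Second, the overshoot of the last part is no longer confined to a single extremal size. From $|G_6|=2N+\wavytriangle(G_1\cup\dots\cup G_5)+c$ you only get $|G|\ge 8N+X+2w+2+c$. So $I_6$ must be graded: $I_6=|G|-(8N+X+2w+2)$ when this is positive, as in Lemma~\ref{cor: sep treew2}. The main proof must then check that $i_2\le 5$ whenever $I_6>0$.

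Third, in the last subcase the tree bound $2|F_5|\le|F|-(|F_1|+|F_4|+2)\le 4N$ becomes only $|G_5|\le 2N+\lfloor w/2\rfloor$. This does not imply $|G_5|\le 2N+\wavytriangle(G_1\cup\dots\cup G_4)$. You have to keep $|G_3|$ in the estimate and use $|G_1|+|G_3|+|G_4|\ge 5N+X-\wavytriangle(G_1\cup\dots\cup G_4)$ together with $N\ge w+1$. This yields $2|G_5|\le 4N+\wavytriangle(G_1\cup\dots\cup G_4)$.
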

As the following remark shows, this immediately implies that every graph $U^w_n$ is universal. Combined with Lemma~\ref{lem: size treewidth}, this yields the upper bound in Theorem~\ref{thm:main-treewidth}.
\begin{remark}
    Let $A$ be any admissible graph with $n$ vertices, in particular $A$ could be some $U^w_n$.
    Let $G$ be any graph with $n$ vertices and $\text{tw}(G) =w$. Note that $\text{tw}(G)=w$ implies $w+1\leq n$. We choose an arbitrary set of vertices $S\subset G$ with $|S| =w+1$ and embed $G\setminus S$ into $A$ using Lemma \ref{lem: main-treewidth}. 
    Note that the vertices in $R_A$ are connected to each other by the \ref{en: type 0 edges-treewidth} edges and to \emph{every} other vertex in $A$ via the \ref{en: type 1 edges-treewidth} edges. Thus, we conclude by placing $S$ at $R_A$.
\end{remark}

\subsubsection{Splitting graphs with treewidth $w$}

Fundamentally, every graph with treewidth $w$ has a \textit{normal} tree-decomposition, meaning that every bag has size $w+1$ and the intersection of any two adjacent bags has size $w$, see for example~\cite{Woodnormaltreedecomp}. Kaul and Wood \cite{kaulwood2025} use this fact to generalize the separator lemma by Chung and Graham. Let $G$ be a graph with tree-decomposition $(B_x : x\in T)$. For any vertex $z\in T$ and forest $F \subset T$, let $G(F,z)$ be the induced subgraph of $G$ on the vertex set $(\bigcup_{x\in F}B_x)\setminus B_z$. 

\begin{lemma}\label{lem:aux-sep-treewidth}
    Let $t\in \mathbb{N}_0$ and $G$ be a graph with $\text{tw}(G)= w$ and $|G| \geq t +w+1$. For every normal tree-decomposition $(B_x : x\in T)$ of $G$, there exists a vertex $z\in T$ such that for some forest $F\subset T\setminus z$,
    \[
    t \leq |G(F, z)|\leq 2t.
    \]
\end{lemma}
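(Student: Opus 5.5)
Fix a normal tree-decomposition $(B_x : x\in T)$ of $G$. The plan is to imitate the standard proof of Lemma~\ref{lem:sep-chung}, working on the decomposition tree $T$ but measuring size through the map $F\mapsto |G(F,z)|$. The key structural fact is normality. Root $T$ at some vertex. For every non-root node $x$ with parent $p$ we have $|B_x\setminus B_p| = 1$, since $|B_x|=w+1$ and $|B_x\cap B_p|=w$. By the subtree property, each vertex of $G$ outside $B_{\mathrm{root}}$ is therefore ``introduced'' at exactly one node: the top node of its subtree of bags. Write $\iota(x)$ for the vertex introduced at $x$.

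The first step is to show that for a node $z$ and a union $F$ of components of $T\setminus z$ hanging below $z$ (subtrees rooted at children of $z$), we have $G(F,z)=\{\iota(x): x\in F\}$. Hence $|G(F,z)|=|F|$, the number of tree nodes. Indeed, every vertex appearing in a bag of a child subtree $T_c$ but not in $B_z$ has its bag-subtree contained in $T_c$, because that subtree is connected and avoids $z$. So it is introduced at exactly one node of $T_c$. Conversely, each $\iota(x)$ with $x\in T_c$ is not in $B_z$, since otherwise it would already appear at the parent of $x$. The same holds for distinct children of $z$, and the sets are disjoint.

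The second step is to count: $|T| = |G|-w \ge t+1$, because the root contributes $w+1$ vertices and every other node exactly one. Now run the Chung--Graham descent on the rooted $T$, counting only descendant subtrees. If $t=0$, take $F=\emptyset$. Otherwise start at the root and, while some child $c$ of the current node $z$ has a subtree $T_c$ with $|T_c|\ge t$, i.e.\ $|T_c|\ge t+1$ is not needed, move to a child with $|T_c| > 2t$ if one exists. More precisely, descend to a child $c$ whenever $|T_c|>2t$. The process stops at a node $z$ all of whose child subtrees have size at most $2t$. It has size $|T_z|\ge t+1$, either because $z$ is the root ($|T|\ge t+1$) or because we descended into it ($|T_z|>2t\ge t+1$). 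So its child subtrees have total size $|T_z|-1\ge t$, and each has size at most $2t$. Greedily add child subtrees to $F$ until $|F|\ge t$. At the moment the threshold is crossed, we had $|F|<t$ before adding a piece of size at most $2t$, so a priori $|F|<3t$.

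The main obstacle is this last bound, which is the one subtle point of the original lemma too. The fix is the usual one: if some single child subtree has size in $[t,2t]$, take it alone. Otherwise all child subtrees have size less than $t$, so the greedy sum crosses $t$ while staying below $2t$. Combined with the first step, this gives $t\le |G(F,z)|=|F|\le 2t$. Care is needed only to restrict $F$ to subtrees below $z$ and to exclude the part of $T\setminus z$ containing the root. The identity $|G(F,z)|=|F|$ relies on the introduced vertex being defined relative to the parent, which is exactly the situation for descendant subtrees.
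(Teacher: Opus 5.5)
Your proof is correct. The paper does not prove this lemma itself: it states it and attributes the underlying separator argument to Kaul and Wood~\cite{kaulwood2025}. So your argument is a self-contained proof in place of a citation, and it is the natural one. Normality makes $x\mapsto\iota(x)$ a bijection between the non-root nodes and $V(G)\setminus B_{\mathrm{root}}$. This gives $|T|=|G|-w\ge t+1$, and it gives $|G(F,z)|=|F|$ whenever $F$ is a union of child subtrees of $z$. The Chung--Graham descent of Lemma~\ref{lem:sep-chung} then finishes the proof.

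A few remarks.

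\textbf{You prove the form the paper needs.} Your $F$ is a union of whole components of $T\setminus z$, so $G(F,z)$ is a union of components of $G\setminus B_z$. This is stronger than the literal ``some forest $F\subset T\setminus z$''. It is exactly what the proof of Lemma~\ref{lem:sep-treewidth} uses, since there $G_3=G(F_3,z)$ must be a part of a partition of $G\setminus S$ with no edges to the other parts. An arbitrary subforest of $T\setminus z$ would not guarantee this.

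\textbf{The restriction to descendant subtrees is unnecessary.} The sets $G(C,z)$, over the components $C$ of $T\setminus z$, partition $V(G)\setminus B_z$. Hence $\sum_C|G(C,z)|=|G|-w-1=|T|-1=\sum_C|C|$. Your identity $|G(C,z)|=|C|$ for the child components therefore forces it for the component containing the root as well. So the lemma is just Lemma~\ref{lem:sep-chung} (in its union-of-components form) applied to $T$.

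\textbf{A wording fix.} The sentence introducing the descent (``while some child \dots\ is not needed'') is garbled. Replace it by the rule you state right after it: descend to a child $c$ whenever $|T_c|>2t$.
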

Building on this lemma, we extend our results about separating vertices in trees, i.e.,~Lemmas~\ref{lem:sep-main},~\ref{lem:sep-main-cons1} and~\ref{lem:sep-main-cons2}. 
For completeness, we also include concise proofs, even though the arguments are similar to those in the tree setting.
The main difference is that instead of removing single vertices, we remove sets of vertices. For graphs with treewidth~$w$ the size of these sets is $w+1$. We adjust the notation and say that for a given graph $G$, the (possibly empty) graphs $G_1,\dots,G_t$ form a partition of $G$ if they are disjoint and $G_1 \cup \dots \cup G_t =G$. Note that if $\text{tw}(G)\leq w$, then also $\text{tw}(G_i)\leq w$ for $1\leq i\leq t$.
\begin{lemma}\label{lem:sep-treewidth}
    Let $0 \leq m$, $2m \leq M$ and $G$ be a graph with $\text{tw}(G)\leq w$ and $|G|\geq M+w+1$. Then there exists a set of vertices $S \subset G$ with $|S|=w+1$ and a partition $G_1, G_2, G_3$ of $G\setminus S$ such that 
    \[
    m \leq |G_3| \leq M,
    \quad
    |G_1| \leq |G| -w-1 -M,
    \quad
    \text{and}
    \quad
    |G_2| \leq |G_1|.
    \]
\end{lemma}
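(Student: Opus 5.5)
We mirror the proof of Lemma~\ref{lem:sep-main}, with Lemma~\ref{lem:aux-sep-treewidth} replacing Lemma~\ref{lem:sep-chung} and a bag of a normal tree-decomposition replacing the separating vertex. If $\text{tw}(G)<w$ we first add edges so that $\text{tw}(G)=w$; this is possible because $|G|\ge w+1$, and removing edges afterwards only helps. Fix a normal tree-decomposition $(B_x:x\in T)$ of $G$; we may assume $T$ is connected. Among all pairs $(z,F)$ with $z\in T$, $F\subset T\setminus z$ a subforest (a union of components of $T\setminus z$), and $m\le |G(F,z)|\le M$, choose one maximizing $|G(F,z)|$. Such a pair exists: apply Lemma~\ref{lem:aux-sep-treewidth} with $t=m$, using $|G|\ge M+w+1\ge m+w+1$, and note $2m\le M$. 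Set $S=B_z$, so $|S|=w+1$, and $G_3=G(F,z)$.

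The basic separation fact is that for distinct components $C,C'$ of $T\setminus z$, the vertex sets $(\bigcup_{x\in C}B_x)\setminus B_z$ and $(\bigcup_{x\in C'}B_x)\setminus B_z$ are disjoint and have no edges between them. This follows from the subtree property: a vertex in bags on both sides must lie in $B_z$, and every edge lies in some bag. Hence $G\setminus S$ is partitioned into the pieces $G(C,z)$ over the components $C$ of $T\setminus z$, and any union of pieces is a union of connected components of $G\setminus S$ (edges may still exist within pieces).

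If $|G_3|=M$, take $G_1=G\setminus(S\cup G_3)$ and $G_2=\emptyset$; the bounds are immediate. Otherwise $|G_3|<M$. We claim at least two components of $T\setminus z$ with nonempty pieces lie outside $F$. Suppose instead that all leftover vertices lie in pieces from a single component $C$, adjacent to $z$ via a neighbour $y$. Move the separator to $z'=y$ and take $F'$ to be all components of $T\setminus y$ other than the one containing $z$, together with that component (which contains $z$ and $F$). Normality gives $|B_z\cap B_y|=w$, so the one vertex $v\in B_z\setminus B_y$ now joins the counted part, and $G(F',y)=G(F,z)\cup\{v\}$. We must check that $v$ occurs in no bag of $C$: otherwise the subtree property would put $v$ in $B_y$. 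So $|G(F',y)|=|G_3|+1\le M$, contradicting maximality.

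The step I expect to be the main obstacle is exactly this shift argument. The bookkeeping has to show that the moved set gains exactly one vertex and loses none, since vertices of $B_y\setminus B_z$ were previously in the leftover part rather than in $G_3$. Components whose piece is empty must also be handled, by discarding them or merging them into $F$.

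Given the claim, let $G_2$ be the smallest nonempty leftover piece and $G_1=G\setminus(S\cup G_3\cup G_2)$, so $|G_2|\le|G_1|$ because $G_1$ contains another piece. Maximality forces $|G_2|+|G_3|>M$, since $F\cup C_2$ is itself an admissible forest choice, where $C_2$ is the component of $T\setminus z$ whose piece is $G_2$. Therefore $|G_1|=|G|-(w+1)-|G_2|-|G_3|\le|G|-w-1-M$, which finishes the proof.
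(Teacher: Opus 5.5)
Your proposal is correct and follows the paper's proof. It uses a maximal pair from Lemma~\ref{lem:aux-sep-treewidth}, shifts the separating bag to the neighbour $y$ using normality, and takes $G_2$ to be a smallest leftover part. Your choice of $G_2$ as a whole piece $G(C_2,z)$, rather than a connected component of $G\setminus(G_3\cup S)$ as in the paper, is slightly more careful, because it makes $G_2\cup G_3=G(F\cup C_2,z)$ a legitimate competitor in the maximality step.

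One wording fix: as literally written, your $F'$ is all of $T\setminus y$; it should be only the component of $T\setminus y$ containing $z$. With that choice $G(F',y)=G_3\cup\{v\}$ holds, since the subtree property keeps every vertex of $G_3$ out of $B_y$.
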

\begin{proof}
    We may assume that $G$ has treewidth $w$ by adding edges, if this is necessary. Let $(B_x:x\in T)$ be a normal tree-decomposition of $G$. Using Lemma \ref{lem:aux-sep-treewidth}, we choose a vertex $z \in T$ and a forest $F_3 \subset T\setminus z$ such that $G_3=G(F_3, z)$ is maximal with respect to $m\leq |G_3| \leq M$. Let $S=B_z$, which implies $|S|=w+1$. Then there are two cases to distinguish.
    \begin{itemize}
        \item If $|G_3| = M$, then the graphs $G_1 = G\setminus (G_3\cup S)$, $G_2 = \emptyset$ and $G_3$ obviously satisfy the conclusion of the lemma.
        \item If $|G_3|<M$, then $T\setminus (F_3\cup z)$ consists of at least two disjoint trees. Otherwise, since the tree-decomposition is normal, we could increment $|G_3|$ by choosing $S=B_{z'}$, where $z'$ is the unique neighbor of $z$ which is not in $F_3$; this would contradict the maximality of $|G_3|$. Hence, there are at least two non-empty disjoint connected components in $G\setminus (G_3\cup S)$. We choose $G_2$ to be the smallest connected component in $G\setminus (G_3\cup S)$ and define $G_1 = G\setminus (G_3\cup S\cup G_2)$, so that, by construction, $|G_1|\geq |G_2|$. Further $|G_2|+|G_3| >M$, since otherwise $G_2 \cup G_3$ would contradict the maximality of $|G_3|$. So,
        \[
        |G_1| = |G|-w-1-(|G_2|+|G_3|) \leq |G|-w-M-2.
        \]
    \end{itemize}
\end{proof}
The next lemma handles the case where we use one set of separating vertices of size $w+1$. This reflects the setting where, given a graph $G$ with $\text{tw}(G)\leq w$ and $2N+X+w+2 \leq |G|\leq 5N +X+2w+2$, we remove $w+1$ vertices to obtain a partition $G_1$, $G_2$, $G_3$ of $G\setminus S$. Controlling the sizes of the $G_i$ will later allow us to apply an inductive embedding strategy.

\begin{lemma}\label{cor: sep treew1}
    Let $X>0$ and $N\geq \max\{X,w+1\}$. Let $G$ be a graph with $\text{tw}(G)\leq w$ and $2N+X+w+2 \leq |G|\leq 5N +X+2w+2$. Then there exists a set of vertices $S\subset G$ with $|S|=w+1$ and a partition $G_1,G_2, G_3$ of $G\setminus S$ such that
    \[
    |G_1| \leq 2N+X, \quad 
    |G_2| \leq 2N + \wavytriangle(G_1), \quad
    |G_3| \leq 2N+\wavytriangle(G_1\cup G_2)+w+1
    \quad 
    \text{and}\quad 
    |G_1|+|G_2| \geq 2N+X.
    \]
\end{lemma}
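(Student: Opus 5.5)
The plan mirrors the proof of Lemma~\ref{lem:sep-main-cons1}, with Lemma~\ref{lem:sep-treewidth} in place of Lemma~\ref{lem:sep-main}. Set
\[
m:=\max\{0,|G|-(4N+X+w+1)\}
\quad\text{and}\quad
M:=|G|-(2N+X+w+1).
\]
Clearly $|G|\ge M+w+1$, so it remains to check $2m\le M$. If $m=0$ this holds because $M\ge 1$ by the lower bound on $|G|$. Otherwise
\[
2m=M+|G|-(6N+X+w+1),
\]
and we need $|G|\le 6N+X+w+1$. This follows from $|G|\le 5N+X+2w+2$ together with $N\ge w+1$; this is exactly where the hypothesis $N\ge w+1$ enters. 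Lemma~\ref{lem:sep-treewidth} then gives $S$ with $|S|=w+1$ and a partition $G_1,G_2,G_3$ of $G\setminus S$ with $m\le|G_3|\le M$, $|G_1|\le|G|-w-1-M=2N+X$ and $|G_2|\le|G_1|$. Moreover,
\[
|G_1|+|G_2|=|G|-(w+1)-|G_3|\ge |G|-(w+1)-M=2N+X.
\]

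Next we bound $|G_2|$. If $|G_1|\le N+X$, then $|G_2|\le|G_1|\le 2N\le 2N+\wavytriangle(G_1)$. Otherwise $N+X<|G_1|\le 2N+X$, and the definition of the rest gives $|G_1|=2N+X-\wavytriangle(G_1)$. Then
\[
|G_2|=|G|-(w+1)-|G_1|-|G_3|\le |G|-(w+1)-(2N+X-\wavytriangle(G_1))-m .
\]
When $m>0$ this equals $2N+\wavytriangle(G_1)$ exactly. When $m=0$ we have $|G|\le 4N+X+w+1$, so the right-hand side is again at most $2N+\wavytriangle(G_1)$.

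Finally we bound $|G_3|$. Since $|G_1|+|G_2|\ge 2N+X$, the definition of the rest yields $|G_1|+|G_2|\ge 3N+X-\wavytriangle(G_1\cup G_2)$. Hence
\[
|G_3|=|G|-(w+1)-(|G_1|+|G_2|)\le 5N+X+2w+2-(w+1)-3N-X+\wavytriangle(G_1\cup G_2)=2N+\wavytriangle(G_1\cup G_2)+w+1 .
\]

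The main subtlety is the $2m\le M$ check. The upper bound on $|G|$ carries an extra $2w+2$ instead of the $2$ from the tree case, and this slack is absorbed only by $N\ge w+1$. The remaining steps are routine rest-arithmetic, with $w+1$ replacing $1$ everywhere.
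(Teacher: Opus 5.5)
Your proposal is correct and follows the paper's proof step for step: the same $m$ and $M$, the same application of Lemma~\ref{lem:sep-treewidth}, and the same rest-arithmetic. Your explicit verification of $2m\le M$, which is where $N\ge w+1$ enters, spells out what the paper only asserts.

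One small slip: at $|G_1|=2N+X$ the rest is $N$, so the equality $|G_1|=2N+X-\wavytriangle(G_1)$ fails there. On the range $N+X<|G_1|\le 2N+X$ only the inequality $|G_1|\ge 2N+X-\wavytriangle(G_1)$ holds, but that is all your estimate uses (and it is what the paper states).
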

\begin{proof}
    We set 
    \[
    m:=\max\big\{0,|G|-w-1-4N-X\big\}
    \quad \text{and}\quad
    M:=|G|-w-1-2N-X.
    \]
    Note that $2m \leq M$ and clearly $|G|\geq M+w+1$. Hence, we apply Lemma \ref{lem:sep-treewidth} to get a set of vertices $S\subset G$ with $|S|=w+1$ and a partition $G_1,G_2,G_3$ of $G\setminus S$ such that 
    \[
    m \leq |G_3| \leq M,
    \quad
    |G_1| \leq |G| -w-1 -M,
    \quad
    \text{and}
    \quad
    |G_2| \leq |G_1|.
    \]
    The definition of $M$ directly implies
    \[
    |G_1|\leq |G| -w-1 -M=2N+X
    \quad\text{and}\quad
    |G_1|+|G_2| = |G|-(|G_3|+w+1)\geq 2N+X.
    \]
    We next argue that $|G_2|\leq 2N+\wavytriangle(G_1)$
    If $|G_1|\leq N+X$, the inequality holds since $|G_2|\leq |G_1|\leq 2N$. Otherwise $X+N<|G_1|\leq 2N+X$, such that
    \[
    |G_2| = |G|-(|G_1|+|G_3|+w+1) \leq |G|-(2N+X-\wavytriangle(G_1)+m+w+1)\leq 2N + \wavytriangle(G_1).
    \]
    It remains to show $|G_3| \leq 2N+\wavytriangle(G_1\cup G_2)+w+1$.
    Since $|G_1|+|G_2| \geq 2N+X$ we have $|G_1|+|G_2| \geq 3N+X-\wavytriangle(G_1\cup G_2)$ and using the upper bound on $|G|$ we obtain
    \[
    |G_3| = |G|-(|G_1|+|G_2|+w+1)\leq |G|-(3N + X -\wavytriangle(G_1\cup G_2)+w+1 )\leq 2N+\wavytriangle(G_1\cup G_2)+w+1.
    \]
\end{proof}
If $|G|\geq 5N+X+2w+3$, it is again possible to prepare the inductive argument by removing \emph{two} sets of separating vertices of size $w+1$. 

\begin{lemma}\label{cor: sep treew2}
    Let $X>0$ and $N\geq \max\{X,w+1\}$. Let $G$ be a graph with $\text{tw}(G)\leq w$ and $5N+X+2w+3 \leq |G| \leq 8N+ X+3w+3$. Let $I_6 = c$, if $|G| = 8N + X +2w+2+c$ for $1\leq c \leq w+1$ and $I_i=0$ else. Then there exist a set of vertices $S_1 \subset G$ with $|S_1|=w+1$ and a partition $G_1,G_2,G_4, \overline{G}$ of $G\setminus S_1$, a set of vertices $S_2 \subset \overline{G}$ with $|S_2|=w+1$ and a partition $G_3, G_5,G_6$ of $\overline{G}\setminus S_2$ such that
    \[
    |G_i| \leq 2N +\wavytriangle\left(\bigcup_{1\leq j<i} G_j\right)+I_i \quad \text{for } i\geq1,
    \quad 
    \text{and}
    \quad
    |G_1| +|G_2| \geq \frac{3}{2}N+X.
    \]
\end{lemma}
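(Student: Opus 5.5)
We mirror the proof of Lemma~\ref{lem:sep-main-cons2}, replacing each separating vertex by a separating set of size $w+1$. Lemma~\ref{lem:sep-treewidth} takes the role of Lemma~\ref{lem:sep-main}, and Lemma~\ref{cor: sep treew1} takes the role of Lemma~\ref{lem:sep-main-cons1}. Every additive constant $1$ in the tree proof that comes from a removed vertex becomes $w+1$, which explains the shifted ranges $5N+X+2w+3\le|G|\le 8N+X+3w+3$ and the error term $I_6\le w+1$. Concretely, we set
\[
m_1:=|G|-(5N+X+2w+2),\qquad M_1:=|G|-(2N+X+w+1).
\]
One checks that $2m_1=M_1+|G|-(8N+X+3w+3)\le M_1$ and $|G|\ge M_1+w+1$. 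Lemma~\ref{lem:sep-treewidth} then yields $S_1$ and a partition $G_1,H_2,H_3$ with
\[
m_1\le|H_3|\le M_1,\qquad |G_1|\le 2N+X,\qquad |H_2|\le|G_1|.
\]

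We then split into the same two cases. In the \emph{Case $|G_1|+|H_2|\le 4N+X$}, put $G_2=H_2$, $G_4=\emptyset$ and $\overline G=H_3$. The argument via the definition of the rest gives $|G_2|\le 2N+\wavytriangle_1$, and $|H_3|\le M_1$ gives $|G_1|+|G_2|\ge 2N+X\ge\frac32N+X$, hence $|G_1|+|G_2|\ge 3N+X-\wavytriangle_2$.
\begin{itemize}
\item If $|\overline G|\le 2N+\wavytriangle_2+w+1$, take $S_2$ to be any $w+1$ vertices of $\overline G$. (If $|\overline G|<w+1$ we need a small padding convention, e.g.\ allowing empty forests; this boundary issue should be checked using $N\ge w+1$ and $m_1\ge 1$.)
\item Otherwise $|\overline G|\le 5N+\wavytriangle_2+2w+2$, and we apply Lemma~\ref{cor: sep treew1} with $\wavytriangle_2$ in place of $X$. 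This gives $|G_6|\le 2N+\wavytriangle_5+w+1$. Counting shows that $|G_6|=2N+\wavytriangle_5+c$ forces $|G|\ge 8N+X+2w+2+c$, so the excess is at most $I_6$.
\end{itemize}

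In the \emph{Case $|G_1|+|H_2|>4N+X$}, put $G_2=\emptyset$. Then $|G_1|\ge\frac32N+X$, and we treat the same three subcases as in the tree proof.
\begin{itemize}
\item If $|G_1|=2N+X$, apply Lemma~\ref{cor: sep treew1} to $\overline G=H_2\cup H_3$ with $N$ in place of $X$. Its size lies in $[3N+w+2,\,6N+2w+2]$, which is the required range.
\item If $|\overline H|<5N+\wavytriangle_1+2w+2$, apply Lemma~\ref{cor: sep treew1} with $\wavytriangle_1$ in place of $X$.
\item Otherwise set $G_4=H_2$, so that $|G_1|+|G_4|=5N+X-\wavytriangle(G_1\cup G_4)$ and $\wavytriangle(G_1\cup G_4)\ge2\wavytriangle_1$. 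Apply Lemma~\ref{cor: sep treew1}'s parent Lemma~\ref{lem:sep-treewidth} to $H_3$ with $m_2=\wavytriangle_1$ and $M_2=\wavytriangle(G_1\cup G_4)$, producing $S_2$ and the partition $G_6,G_5,G_3$. Here the hypothesis $|H_3|\ge M_2+w+1$ follows from $|\overline H|\ge 5N+\wavytriangle_1+2w+2$. The size checks for $G_3,G_4,G_5$ are identical to the tree case. For $G_6$ we get $|G_6|\le |G|-(6N+X-\wavytriangle_5+2w+2)$, which is at most $2N+\wavytriangle_5+I_6$ by the definition of $I_6$.
\end{itemize}

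The main obstacle is bookkeeping the $w+1$ shifts consistently. In each application of Lemma~\ref{cor: sep treew1} one must verify both ends of its size window, and in each extremal computation one must verify that equality cases produce exactly the overshoot $c=|G|-(8N+X+2w+2)$ rather than something larger. I would first redo the tree-case inequalities symbolically with a separator size $k$ (equal to $1$ for trees) and then substitute $k=w+1$, confirming that every inequality survives; the hypothesis $N\ge w+1$ is used to keep the intermediate windows nonempty.
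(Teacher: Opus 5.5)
Your outline follows the paper's proof step for step: the same $m_1,M_1$, the same case split, and the same uses of Lemmas~\ref{lem:sep-treewidth} and~\ref{cor: sep treew1}. One step, however, does not go through as stated, and the paper's written proof has the same defect.

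The problem is the middle subcase of Case~2, $|\overline H|<5N+\wavytriangle_1+2w+2$, where you never check $G_6$. In that subcase $|G|=|G_1|+(w+1)+|\overline H|\le 7N+X+3w+2<8N+X+2w+3$, so $I_6=0$ and you need $|G_6|\le 2N+\wavytriangle_5$. Lemma~\ref{cor: sep treew1} only gives $|G_6|\le 2N+\wavytriangle_5+w+1$. The tree-case contradiction now reads as follows: $|G_6|=2N+\wavytriangle_5+c$ together with $|G_3|+|G_5|\ge 3N+\wavytriangle_1-\wavytriangle_5$ forces $|\overline G|\ge 5N+\wavytriangle_1+w+1+c$. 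This contradicts $|\overline G|\le 5N+\wavytriangle_1+2w+1$ only if $c>w$.

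Here is a concrete instance. Take $N=10$, $X=5$, $w=2$, $|G|=83$, with $|G_1|=24$, $|H_2|=23$, $|H_3|=33$; this is compatible with Lemma~\ref{lem:sep-treewidth}. We are in this subcase, since $\wavytriangle_1=1$ and $|\overline H|=56<57$. Lemma~\ref{cor: sep treew1} is then allowed to return $|G_3|=21$, $G_5=\emptyset$, $|G_6|=32$, whereas $2N+\wavytriangle_5+I_6=30$.

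The fix is to translate the tree threshold $5N+\wavytriangle_1+2$ as $5N+\wavytriangle_1+w+2$ (separator size plus one), not $5N+\wavytriangle_1+2w+2$.
- For $|\overline H|\le 5N+\wavytriangle_1+w+1$ you get $|G_6|\le|\overline G|-(3N+\wavytriangle_1-\wavytriangle_5+w+1)\le 2N+\wavytriangle_5$.
- The third subcase covers all larger $|\overline H|$. Its only use of the lower bound on $|\overline H|$ is $|H_3|\ge M_2+w+1$, and this holds regardless, since $|H_3|=|G|-(w+1)-(5N+X-M_2)\ge M_2+w+2$.

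In fact you can drop the middle subcase altogether.

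Two smaller bookkeeping points.
- In the third subcase, the $G_5$ check is not literally the tree one. The bound $2|G_5|\le |G|-(|G_1|+|G_4|)-2(w+1)\le 4N+w$ does not give $|G_5|\le 2N+\wavytriangle_4$. Use instead $|G_1|+|G_3|+|G_4|\ge 5N+X-\wavytriangle_4$ to get $2|G_5|\le 3N+\wavytriangle_4+w+1\le 4N+\wavytriangle_4$; this is where $N\ge w+1$ is genuinely needed.
- Your worry about $|\overline G|<w+1$ in Case~1 is void. The case condition gives $|\overline G|\ge |G|-(w+1)-(4N+X)\ge N+w+2$.

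Otherwise Case~1 is fine, and there you are more careful than the paper. Your bound $|\overline G|\le 5N+\wavytriangle_2+2w+2$, obtained from $|G_1|+|G_2|\ge 3N+X-\wavytriangle_2$, is what the window of Lemma~\ref{cor: sep treew1} requires; the paper's $6N+2w+2$ is too weak unless $\wavytriangle_2=N$. Your count showing that an excess $c$ forces $|G|\ge 8N+X+2w+2+c$ is also correct.
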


\begin{proof}
We set 
\[
m_1=|G|-(5N+X+2w+2)
\quad \text{and}\quad
M_1 = |G| -(2N+X+w+1).
\]
Note that $2m_1\leq M_1$ and clearly $|G|\geq M_1+w+1$. Thus, we apply Lemma \ref{lem:sep-treewidth} to get a set of vertices $S_1 \subset G$ with $|S_1|=w+1$ and a partition $G_1, H_2, H_3$ of $G\setminus S_1$ such that
\begin{equation}\label{sep1-tw}
m_1 \leq |H_3| \leq M_1,
\quad
|G_1| \leq |G| -w-1 -M_1,
\quad
\text{and}
\quad
|H_2| \leq |G_1|.
\end{equation}
The definition of $M_1$ directly implies $|G_1| \leq  2N+X$.
We distinguish between two cases according to $|G_1|+|H_2|$.
\paragraph{Case $|G_1|+|H_2| \leq 4N+X$.}
Set
\[
G_2=H_2,
\quad
G_4=\emptyset
\quad \text{and}\quad
\overline{G}=H_3.
\]
Note that $|G_2|\leq 2N+\wavytriangle(G_1)$, since $|G_2|\leq |G_1|$ by \eqref{sep1-tw}. Moreover, $|G_1|+|G_2| = |G|-(|H_3|+w+1)  \geq 2N+X$. 
Thus, $G_1$ and $G_2$ satisfy the required conditions. We further distinguish two cases according to the size of $\overline{G}$.
\begin{itemize}
    \item If $|\overline{G}|\leq 2N+\wavytriangle(G_1\cup G_2)+w+1$, let $S_2\subset\overline{G}$ be any vertex set with $|S_2|=w+1$. Choosing $G_3=\overline{G}\setminus S_2$ and $G_5=G_6 =\emptyset$, then proves the claim.
    \item It remains to consider $|\overline{G}| \geq 2N+\wavytriangle(G_1\cup G_2) + w+2$. Since $|G_1|+|G_2|\geq 2N+X$ and $|G|\leq 8N+X+3w+3$, we obtain
    \[
    |\overline{G}|= |H_3| = |G|-(|G_1|+|G_2|+w+1) \leq 6N+2w+2.
    \]
    In particular, $2N+\wavytriangle(G_1\cup G_2)+w+2\leq |\overline{G}|\leq 6N+2w+2$. Thus, applying Lemma \ref{cor: sep treew1} to $\overline{G}$ and $\wavytriangle(G_1\cup G_2)$ for $X$ (and $N$ for $N$) yields a set of separating vertices $S_2$ and a suitable partition $G_3, G_5, G_6$ of $\overline{G}\setminus S_2$.
\end{itemize}
\paragraph{Case $|G_1|+|H_2| >4N+X$.} 
Set
\[
G_2=\emptyset 
\quad\text{and}\quad
\overline{H} = H_2\cup G_3.
\]
We obtain $|G_1|\geq X+\frac{3}{2}N$, since $|G_1|+|H_2| >4N+X$ and $|H_2|\leq |G_1|$ by~\eqref{sep1-tw}. Thus, $G_1$ and $G_2$ satisfy the required conditions.
Before dealing with large $\overline{G}$, we handle two cases by directly applying Lemma~\ref{cor: sep treew1}.
\begin{itemize}
    \item If $|G_1|=2N+X$, set $G_4=\emptyset$ and $\overline{G}=\overline{H}$. The bounds on $|G|$ and $|\overline{G}|=|G|-(|G_1|+w+1)$ show
    \[
    3N+w+2\leq |\overline{G}|\leq 6N+2w+2.
    \]
    Thus, Lemma \ref{cor: sep treew1} applied to $\overline{G}$ and $N$ for $X$ (and $N$ for $N$) yields a set of separating vertices $S_2$ and a suitable partition $G_3,G_5,G_6$ of $\overline{G}\setminus S_2$.
    \item Next, assume that $|\overline{H}| < 5N+\wavytriangle(G_1)+2w+2$ and $|G_1|<2N+X$. 
    Set $G_4 =\emptyset$ and $\overline{G}=\overline{H}$. Since $|\overline{G}|=|G|-(|G_1|+w+1)$ and $|G_1|\leq2N+X-\wavytriangle(G_1)$, we obtain
    \[
    3N+\wavytriangle(G_1)+w+2 \leq |\overline{G}|\leq 5N+\wavytriangle(G_1)+2w+1.
    \]
    Thus, Lemma \ref{cor: sep treew1} applied to $\overline{G}$ and $\wavytriangle(G_1)$ for $X$ (and $N$ for $N$) yields a set of separating vertices $S_2$ and a suitable partition $G_3,G_5,G_6$ of $\overline{G}\setminus S_2$.
    \item Finally, we consider the case $|\overline{H}|\geq 5N+\wavytriangle(G_1)+2w+2$ and $|G_1|<2N+X$. Set $G_4 = H_2$. We will split off a little graph from $H_3$ which is larger than $\wavytriangle(G_1)$. We observe
    \begin{equation}\label{part2-tw}
        4N+X<|G_1|+|G_4|\leq 2|G_1|<5N+X
        \quad\text{so that}\quad
        |G_1|+|G_4|=5N+X-\wavytriangle(G_1\cup G_4).
    \end{equation}
    This shows $\wavytriangle(G_1\cup G_4)\geq 2 \wavytriangle(G_1)$, since
    \[
    \wavytriangle(G_1\cup G_4)=5N+X-|G_1|-|G_4| \geq 5N+X-2|G_1| = 5N+X-2(2N+X-\wavytriangle(G_1))\geq 2\wavytriangle(G_1).
    \]
    Thus, we apply Lemma \ref{lem:sep-treewidth} to $\overline{G}=H_3$ with $m_2:=\wavytriangle(G_1)$, $M_2 := \wavytriangle(G_1\cup G_4)$ to obtain $S_2 \subset \overline{G}$ and a partition $G_6, G_5,G_3$ of $\overline{G}\setminus S_2$ such that 
    \begin{equation*}\label{sep2-tw}
    m_2 \leq |G_3| \leq M_2, \quad 
    |G_6| \leq |\overline{G}|-w-1-M_2,\quad 
    \text{and}\quad 
    |G_5|\leq |G_6|.
    \end{equation*}
    This implies $|G_3|\leq \wavytriangle(G_1\cup G_4)\leq N$. 
    Since $|G_1|=2N+X-\wavytriangle(G_1)$, we obtain 
    \[
    |G_1|+|G_3|\geq 2N+X-\wavytriangle(G_1)+m_2\geq 2N+X
    \quad \text{and}\quad
    |G_1|+|G_3|\geq 3N+X-\wavytriangle(G_1\cup G_2\cup G_3).
    \]
    Together with $|G_1|+|G_4|+|G_3|\leq 5N+X-\wavytriangle(G_1\cup G_4)+M_2=5N+X$, it follows that
    \[
    |G_4|\leq 5N+X-(|G_1|+|G_3|)\leq 5N+X-(3N+X-\wavytriangle(G_1\cup G_2\cup G_3)) =2N+\wavytriangle(G_1\cup G_2\cup G_3).
    \]
    Moreover, $|G_1|+|G_3|+|G_4|\geq 5N+X-\wavytriangle(G_1\cup \dots\cup G_4)$.
    Since $|G_5|\leq |G_6|$ and $w+1\leq N$, we obtain
    \[
    2|G_5|\leq |G_5|+|G_6|\leq |G|-(|G_1|+|G_3|+|G_4|+2w+2)\leq 4N+\wavytriangle(G_1\cup \dots\cup G_4).
    \]
    Thus, $|G_5|\leq 2N+\wavytriangle(G_1\cup \dots\cup G_4)$. Finally, $|G_6|$ satisfies its constraint. Indeed $|G_3|+|G_5|=|\overline{G}|-(|G_6|+w+1)\geq M_2$ which, together with \eqref{part2-tw}, implies
    \[
    |G_6|=|G|-(|G_1|+|G_3|+|G_4|+|G_5|+w+2)\leq 2N+\wavytriangle(G_1\cup\dots\cup G_5)+I_6.
    \]
\end{itemize}
\end{proof}

\subsubsection{Proof of Lemma \ref{lem: main-treewidth}}

The proof relies on the splitting lemmas from the previous section and is similar to the tree case with adjustments to handle the blown-up structure. We proceed by induction on $|A|\in \mathbb{N}$. For $h<2$, every admissible graph is complete. Thus, let $h\geq 2$ and assume that the statement holds  for all admissible graphs $A'$ with $|A'|<|A|$ and graphs $G'$ with $\text{tw}(G')\leq w, |G'|\leq |A'|-w-1$. 
Let $G$ be a graph with $\text{tw}(G) \leq w$ and $|G|\leq |A|-w-1$, and let $V\subset A$ be the first $|G|$ vertices in the eating order of $A$.
We distinguish three cases according to the structure of $A$ given by Figure~\ref{fig:admissible-treewidth}.

Assume that $A$ has shape as in Figure \ref{fig:admissible-a-treewidth}, consisting of the root vertices $R_A$ and an admissible subgraph $A'$. For $|G|\leq |A'|-w-1$, we apply the induction hypothesis to $G$ and $A'$, which proves the claim. Otherwise, we choose an arbitrary vertex set $S\subset G$ with $|S|=|A'|-w-1$.
Note that $|G|\leq |A|-w-1$ implies $|S|\leq w+1$.
Thus, we apply the induction hypothesis to embed $G\setminus S$ into $A'$ leaving only the root vertices $R_{A'}$ of $A'$. Then, we place $S$ at the first $|S|$ vertices in $R_{A'}$. 
This process embeds $G$ exactly at $V$ as illustrated in Figure~\ref{fig:IH-treewidth}.
Since the vertices $R_{A'}$ are connected to each other by \ref{en: type 0 edges-treewidth} edges and to every other vertex in $A'$ via \ref{en: type 1 edges-treewidth} edges, this yields a proper embedding.

Next, assume that $A$ has shape as in Figure \ref{fig:admissible-b-treewidth}, consisting of the root vertices $R_A$, one subgraph $T^w_{h-1}$ and another admissible subgraph $A'$, where $A'$ is first in the eating order. If $A'$ only consists of root vertices, i.e.~$|A'| \leq w+1$, we place $|A'|$ arbitrary vertices $S\subset G$ at $A'$ and embed $G\setminus S$ into $A\setminus A'$ using the induction hypothesis. 
This yields a proper embedding, since the vertices in $A'$ are connected to each other by \ref{en: type 0 edges-treewidth} edges and all other vertices in $T^w_{h-1}$ via \ref{en: type 1 edges-treewidth} edges.

\begin{figure}[tb]
    \centering
    \begin{subfigure}[t]{0.3\textwidth}
        \centering
        \begin{tikzpicture}[scale=0.85, every node/.style={transform shape}]
            \draw[very thick] (-2.15,-4.5) -- (-1.5,-2.5) -- (-0.85,-4.2) decorate [decoration={zigzag,segment length=2mm}] {-- cycle};
            \node[draw, circle, minimum size=0.9cm, inner sep=0pt, fill=white, label=right:{$R_A$}] (root2) at (0,-1) {\footnotesize$K_{w+1}$};
            \node[draw, circle, minimum size=0.9cm, inner sep=0pt, fill=white, label=right:{$R_{A'}$}] (child1) at (-1.5,-2.5) {};
            \fill[white] (-2.15,-4.5) -- (-1.5,-2.5) -- (-0.85,-4.2) decorate [decoration={zigzag,segment length=2mm}] {-- cycle};
             \node[circle, minimum size=0.9cm, inner sep=0pt,label={[label distance=0.6cm]-90:$A'$}] at (-1.5,-2.5) {};
            
            \draw (root2) -- (child1);
            
            \draw[dashed] (-2.5,-4.8) -- (-0.2,-4.8) -- (-0.2,-1.9) -- (-2.5,-1.9) -- cycle;
        \end{tikzpicture}
        \caption{}
        \label{fig:IH-a-treewidth}
    \end{subfigure}
    %
    \begin{subfigure}[t]{0.3\textwidth}
        \centering
        \begin{tikzpicture}[scale=0.85, every node/.style={transform shape}]
            \node[] at (0,-4.8) {};
            \node[draw, circle, minimum size=0.9cm, inner sep=0pt, fill=white, label=right:{$R_A$}] (root2) at (0,-1) {\footnotesize$K_{w+1}$};
            \node[draw, circle, minimum size=0.9cm, inner sep=0pt, fill=white, label=right:{$R_{A'}$}] (child1) at (-1.5,-2.5) {\footnotesize$K_{w+1}$};
            \draw (root2) -- (child1);
            \draw[dashed] (-2.1,-3.1) -- (-0.2,-3.1) -- (-0.2,-1.9) -- (-2.1,-1.9) -- cycle;
        \end{tikzpicture}
        \caption{}
        \label{fig:IH-b-treewidth}
    \end{subfigure}
    %
    \begin{subfigure}[t]{0.3\textwidth}
        \centering
        \begin{tikzpicture}[scale=0.85, every node/.style={transform shape}]
            \node[] at (0,-4.7) {};

            \node[draw, circle, minimum size=0.9cm, inner sep=0pt, fill=white, label=right:{$R_A$}] (root2) at (0,-1) {\footnotesize$K_{w+1}$};
            \node[circle, minimum size=0.9cm, inner sep=0pt, fill=white,
      draw=none, label=right:{$R_{A'}$}] (child1) at (-1.5,-2.5) {};

\def\r{0.45} 
\def\a{225}   
\def\b{320}  

\path (child1.center) ++(\a:\r) coordinate (A);
\path (child1.center) ++(\b:\r) coordinate (B);

\draw (B) arc[start angle=\b,end angle=\a+360,radius=\r]; 

\draw[decorate,decoration={zigzag,segment length=4pt,amplitude=2pt}] (A) -- (B);
            \draw (root2) -- (child1);
        \end{tikzpicture}
        \caption{}
        \label{fig:IH-c-treewidth}
    \end{subfigure}
    \caption{\small Illustration of the embedding of a graph $G$ with $|A'|-w-1<|G|\leq |A|-w-1$ into an admissible graph $A$ as in Figure \ref{fig:admissible-a-treewidth}. Consider a vertex set $S\subset G$ such that $|S|=|A'|-w-1$. We embed $G\setminus S$ into $A'$ using the induction hypothesis, depicted in (a), leaving only the root vertices of $A'$ shown in (b). We conclude by placing $S$ at the first $|S|$ vertices in the eating order of $R_{A'}$. This leaves either some root vertices of $A'$, as shown in (c), or only $R_A$. In both cases $G$ is exactly embedded at $V$ and the resulting graph is admissible. }
    \label{fig:IH-treewidth}
\end{figure}

Otherwise, we consider the case where $A$ is given by Figure \ref{fig:admissible-b-treewidth} and $|A'|>w+1$. Then $A'$ consists of an admissible subgraph $A''$ (first in the eating order) and up to two copies of $T^w_{h-2}$.
Set $N:= |T^w_{h-2}|$ and $X:= |A''|$.
Clearly, $N\geq \max\{X,w+1\}$. Let $(r_{A''},w+1)\in R_{A''}$.
We first consider the case $|G|\leq 2N+X+w+1$.
Let $U$ be the induced subgraph on $R_{A'}$, the vertices in $A''$, and on $u_1 =(r_{A''},w+1)-1$, $u_2 =(r_{A''},w+1)-2$ with cliques $C_{u_1}$, $C_{u_2}$ and descendants $D_{u_1}$, $D_{u_2}$.
The graph $U$ is admissible, by Observation~\ref{obs:2-tw}, and $V\subset U$.
If $V\cap R_{A'} = \emptyset$, we embed $G$ into $U$ using the induction hypothesis. Otherwise, if $|V\cap R_{A'}| =c\geq 1$, we place $c$ arbitrary vertices $S\subset G$ at the first $c$ vertices eaten in $R_{A'}$ and embed $G\setminus S$ into $U$ by the induction hypothesis. Thus, $G$ is embedded at $V$ and the vertices in $R_{A'}$ are connected to every other vertex in $U$ thanks to the \ref{en: type 0 edges-treewidth} -- \ref{en: type 2 edges-treewidth} edges.

In the setting of Figure~\ref{fig:admissible-b-treewidth}, we are left with the case $|G| > 2N+X+w+1$ and $|A'|>w+1$. The size of $A$ implies that $|G| \leq 5N+X+2w+2$. Applying Lemma \ref{cor: sep treew1} yields a set of vertices $S\subset G$ with $|S|=w+1$ and a partition $G_1,G_2,G_3$ of $G\setminus S$ such that, 
\[
    |G_1| \leq 2N+X, \quad 
    |G_2| \leq 2N + \wavytriangle(G_1), \quad
    |G_3| \leq 2N+\wavytriangle(G_1\cup G_2)+w+1
    \quad 
    \text{and}\quad 
    |G_1|+|G_2| \geq 2N+X.
\]
We embed $G_1$ using the admissible graph $U$ exactly as above. If $G_1$ eats $A'\setminus R_{A'}$ completely, we place $S$ at $R_{A'}$ so that the embedding of $G_1\cup S$ eats the first $|G_1|+w+1$ vertices in the eating order. Hence, the remaining graph is admissible, by Observation~\ref{obs:1-tw}, and we embed $G_2\cup G_3$ into it using the induction hypothesis. 
Otherwise, let $A^{(1)}$ be the resulting graph after the embedding of $G_1$. We embed $G_2$ using the admissible graph of size $2N+\wavytriangle(G_1)$ given by $R_{A'}$ and the three subgraphs of $A^{(1)}$ rooted at level two, that are eaten first. 
Since $|G_1|+|G_2| \geq 2N+X$, the embedding of $G_1\cup G_2$ eats $A'\setminus R_{A'}$ completely. Therefore, we place $S$ at $R_{A'}$, which leaves an admissible graph, by Observation~\ref{obs:1-tw}. Thus, $G_3$ can be  embedded into the remaining graph using the induction hypothesis. All this yields a proper embedding, since $G$ eats $V$ and the vertices in $R_{A'}$ are connected to every other vertex in $A'$ and $T^w_{h-1}$ due to the \ref{en: type 0 edges-treewidth} -- \ref{en: type 2 edges-treewidth} edges.

Lastly, assume that $A$ has shape as in Figure \ref{fig:admissible-c-treewidth} with root vertices $R_{A}$, an admissible subgraph $A'$ (first in the eating order), and two copies $T^2_{h-1}$ and $T^3_{h-1}$ of $T^w_{h-1}$ (second and third in the eating order). 
If $|A'|\leq w+1$, we place arbitrary $|A'|$ vertices $S\subset G$ at $A'$ and finish the proof by embedding $G\setminus S$ into $A\setminus A'$ via the induction hypothesis. 
This proves the claim, since the vertices in $A'$ are connected to each other and every vertex in $T^2_{h-1}$ and $T^3_{h-1}$ due to the \ref{en: type 0 edges-treewidth} and \ref{en: type 2 edges-treewidth} edges.
Otherwise, as before, $A'$ is given by an admissible subgraph $A''$ and up to two copies of $T^w_{h-2}$. Set $X:=|A''|$ and $N:=|T^w_{h-2}|$.
If $|G| \leq 5N+X+2w+2$, we proceed exactly as in the case of Figure~\ref{fig:admissible-b-treewidth}, the only difference being that $R_{A}$ has three -- instead of two -- cliques as children. We are left with the case $5N+X+2w+2 < |G| \leq  8N+X+3w+3$. 
Thus, applying Lemma \ref{cor: sep treew2} yields a set of vertices $S_1 \subset G$ with $|S_1|=w+1$ and a partition $G_1,G_2,G_4, \overline{G}$ of $G\setminus S_1$, a set of vertices $S_2 \subset \overline{G}$ with $|S_2|=w+1$ and a partition $G_3, G_5,G_6$ of $\overline{G}\setminus S_2$ such that
\begin{equation}\label{part4-tw}
    |G_i| \leq 2N +\wavytriangle\left(\bigcup_{1\leq j<i} G_j\right)+I_i \quad \text{for } i\geq1,
    \quad 
    \text{and}
    \quad
    |G_1| +|G_2| \geq \frac{3}{2}N+X,
\end{equation}
where $I_6 = c$, if $|G| = 8N + X +2w+2+c$ for some $1\leq c \leq w+1$ and $I_i=0$ else.
Let $A^{(0)}=A$. In the following, for $1\leq i\leq 6$, we iteratively embed $G_i$ into $A^{(i-1)}$ and denote the resulting graph after the embedding $A^{(i)}$. We proceed in three main steps:
\begin{enumerate}
    \item\label{step1-tw}
    \begin{enumerate}
        \renewcommand{\labelenumii}{\alph{enumii})}
        \item\label{step1a-tw} Embed $G_1,\dots,G_{i_1}$, choosing $i_1$ minimal so that $R_{A'}$ are the first vertices in the eating order of $A^{(i_1)}$.
        \item\label{step1b-tw} Place $S_1$ at $R_{A'}$.
    \end{enumerate}
    \item\label{step2-tw}
    \begin{enumerate}
        \renewcommand{\labelenumii}{\alph{enumii})}
        \item\label{step2a-tw} Embed $G_{i_1+1},\dots, G_{i_2}$, choosing $i_2$ minimal so that $R_2=R_{T^2_{h-1}}$ are the first vertices in the eating order of $A^{(i_2)}$.
        \item\label{step2b-tw} Place $S_2$ at $R_{2}$.
    \end{enumerate}
    \item\label{step3-tw} Embed the remaining graphs $G_{i_2+1},\dots, G_6$. 
\end{enumerate}
We start with Step~\ref{step1-tw} and $i=1$.
Let $U^{(i)}$ be the induced subgraph of $A^{(i-1)}$ of size $2N+\wavytriangle(G_1\cup\dots \cup G_{i-1}) +w+1$ on $R_{A'}$ and the three subgraphs in $A^{(i-1)}$ rooted at level two that are eaten first. Then, $U^{(i)}$ is admissible by Observation~\ref{obs:2-tw}.
Further, by~\eqref{part4-tw}, $|G_i|\leq  |U^{(i)}|-(w+1)$ holds for $1\leq i \leq 5$. Using $U^{(i)}$, we embed $G_i$ into $A^{(i-1)}$ via the induction hypothesis.
We repeat this procedure until $R_{A'}$ are the first vertices in the eating order of the resulting graph $A^{(i_1)}$. 
Since $|G|\geq 5N+X+2w+3$, this process finishes with $i_1\leq 5$.
Next, we place $S_1$ at $R_{A'}$. 
This yields a proper embedding of $G_1,\dots,G_{i_1}$ and $S_1$ since the vertices in $R_{A'}$ are connected to every vertex in $A'$ and in $T^2_{h-1}$ thanks to the \ref{en: type 0 edges-treewidth} -- \ref{en: type 2 edges-treewidth} edges.

We proceed with Step~\ref{step2-tw} starting with $i=i_1+1$. 
Let $U^{(i)}$ be the induced subgraph of $A^{(i-1)}$ of size $2N+\wavytriangle(G_1\cup\dots \cup G_{i-1})+w+1$, consisting of the vertices $R_{2}$ together with the three subgraphs in $A^{(i-1)}$ rooted at level two that are eaten first. Then, $U^{(i)}$ is admissible by Observation~\ref{obs:2-tw}.
By~\eqref{part4-tw}, $|G_i|\leq  |U^{(i)}|-(w+1)$ holds unless $I_6>0$. The case $I_6>0$ only occurs when $|G|=8N+X+2w+2+c$, which in turn implies $i_2\leq 5$.
Thus, we embed $G_i$ into $A^{(i-1)}$ using the admissible graph $U^{(i)}$ until $R_{2}$ are the first vertices in the resulting graph $A^{(i_2)}$. 
By~\eqref{part4-tw}, $|G_1|+|G_2|\ge \frac{3}{2}N+X$, so that placing $S_2$ at $R_{2}$ provides a proper embedding of the first $i_2$ graphs and $S_1, S_2$ due to the \ref{en: type 0 edges-treewidth} -- \ref{en: type 3 edges-treewidth} edges. 

To conclude the proof, we address Step~\ref{step3-tw}. Observe that the graphs $G_1,\dots,G_{i_2}$ together with $S_1, S_2$ eat the first $|G_1|+ \dots+|G_{i_2}|+2w+2$ vertices in the eating order of $A$. Thus, by Observation~\ref{obs:1-tw}, the remaining graph is admissible. This finishes the proof by applying the induction hypothesis to $G_{i_2+1}\cup \dots \cup G_6$ and the remaining admissible graph.

\section{Acknowledgments}
This project was initiated during a research workshop in Buchboden, June 2024.
We want to thank Kalina Petrova for proposing the problem.
We are grateful to Anders Martinsson for valuable discussions.

\bibliographystyle{plain}
\bibliography{references}

\end{document}